\newcommand{\equi}{\Longleftrightarrow}
\newcommand{\comment}[1]{}
\newcommand{\f}{\mathcal{F}}
\newcommand{\pp}{\ensuremath{\mathbb{P}}}
\newcommand\be{\begin{equation}}
\newcommand\ee{\end{equation}}
\newcommand\bea{\begin{eqnarray}}
\newcommand\eea{\end{eqnarray}}
\newcommand\nbea{\begin{eqnarray*}}
\newcommand\neea{\end{eqnarray*}}
\newcommand\bi{\begin{itemize}}
\newcommand\ei{\end{itemize}}
\newcommand\ben{\begin{enumerate}}
\newcommand\een{\end{enumerate}}
\newcommand{\ncr}[2]{\left({#1 \atop #2}\right)}
\newtheorem{thm}{Theorem}[section]
\newtheorem{lem}[thm]{Lemma}
\newtheorem{defi}[thm]{Definition}
\newcommand{\twocase}[5]{#1 \begin{cases} #2 & \text{{\rm #3}}\\ #4
&\text{{\rm #5}} \end{cases}   }
\newcommand{\E}{\ensuremath{\mathbb{E}}}
\newcommand{\R}{\ensuremath{\mathbb{R}}}
\newcommand{\C}{\ensuremath{\mathbb{C}}}
\newcommand{\Z}{\ensuremath{\mathbb{Z}}}
\newcommand{\N}{\mathbb{N}}
\newcommand{\foh}{\frac{1}{2}}
\newcommand{\ga}{\alpha}
\newcommand{\gep}{\epsilon}
\numberwithin{equation}{section}
\newcommand{\p}{\prime}
\newcommand{\s}{\sigma}
\newcommand{\ds}{\displaystyle}
\newcommand{\Mod}[1]{\,(\operatorname{mod}\, #1)}
\newcommand{\con}{\equiv}
\DeclareMathOperator{\Tr}{Tr}
\DeclareMathOperator{\M}{M}
\newcommand{\bal}{\begin{align}}
\newcommand{\eal}{\end{align}}
\newcommand{\nn}{\nonumber}
\newcommand{\surj}{\twoheadrightarrow}
\newcommand{\inj}{\hookrightarrow}
\begin{document}

\title[Limiting Spectral Measure for Symmetric Block Circulant Matrices]{The Limiting Spectral Measure for Ensembles of Symmetric Block Circulant Matrices}

\author{Murat Kolo$\breve{{\rm g}}$lu}\email{Murat.Kologlu@williams.edu}
\address{Department of Mathematics and Statistics, Williams College,
Williamstown, MA 01267}

\author{Gene S. Kopp}\email{gkopp@uchicago.edu}
\address{Department of Mathematics, University of Chicago, Chicago, IL 60637}

\author{Steven J. Miller}\email{Steven.J.Miller@williams.edu}
\address{Department of Mathematics and Statistics, Williams College,
Williamstown, MA 01267}

\subjclass[2010]{15B52, 60F05, 11D45 (primary), 60F15, 60G57, 62E20 (secondary)}

\keywords{limiting spectral measure, circulant and Toeplitz matrices, random matrix theory, convergence, method of moments, orientable surfaces, Euler characteristic}

\date{\today}

\thanks{The first and second named authors were partially supported by Williams College and NSF grants DMS0855257 and DMS0850577, and the third named author was partly supported by NSF grant DMS0970067. It is a pleasure to thank our colleagues from the Williams College 2010 SMALL REU program as well as the participants of the ICM Satellite Meeting in Probability \& Stochastic Processes (Bangalore, 2010) for many helpful conversations, especially Arup Bose and Rajat Hazra. We would also like to thank Elizabeth Townsend Beazley for comments on Wentao Xiong's senior thesis, which is the basis of Appendix \ref{sec:appwentao}}

\begin{abstract} Given an ensemble of $N \times N$ random matrices, a natural question to ask is whether or not the empirical spectral measures of typical matrices converge to a limiting spectral measure as $N\to\infty$. While this has been proved for many thin patterned ensembles sitting inside all real symmetric matrices, frequently there is no nice closed form expression for the limiting measure. Further, current theorems provide few pictures of transitions between ensembles. We consider the ensemble of symmetric $m$-block circulant matrices with entries i.i.d.r.v. These matrices have toroidal diagonals periodic of period $m$.  We view $m$ as a ``dial'' we can ``turn'' from the thin ensemble of symmetric circulant matrices, whose limiting eigenvalue density is a Gaussian, to all real symmetric matrices, whose limiting eigenvalue density is a semi-circle.  The limiting eigenvalue densities $f_m$ show a visually stunning convergence to the semi-circle as $m \to \infty$, which we prove.

In contrast to most studies of patterned matrix ensembles, our paper gives explicit closed form expressions for the densities.  We prove that $f_m$ is the product of a Gaussian and a certain even polynomial of degree $2m-2$; the formula is the same as that for the $m \times m$ Gaussian Unitary Ensemble (GUE).  The proof is by derivation of the moments from the eigenvalue trace formula.  The new feature, which allows us to obtain closed form expressions, is converting the central combinatorial problem in the moment calculation into an equivalent counting problem in algebraic topology. We end with a generalization of the $m$-block circulant pattern, dropping the assumption that the $m$ random variables be distinct. We prove that the limiting spectral distribution exists and is determined by the pattern of the independent elements within an $m$-period, depending on not only the frequency at which each element appears, but also the way the elements are arranged. \end{abstract}

\maketitle

\tableofcontents


\section{Introduction}

\subsection{History and Ensembles}

Random matrix theory is the study of properties of matrices chosen according to some notion of randomness, which can range from taking the structurally independent entries as independent identically distributed random variables to looking at subgroups of the classical compact groups under Haar measure. While the origins of the subject go back to Wishart's \cite{Wis} investigations in statistics in the 1920s, it was Wigner's work \cite{Wig1, Wig2, Wig3, Wig4, Wig5} in the 1950s and Dyson's \cite{Dy1, Dy2} a few years later that showed its incredible power and utility, as random matrix ensembles successfully modeled the difficult problem of the distribution of energy levels of heavy nuclei. The next milestone was twenty years later, when Montgomery and Dyson \cite{Mon} observed that the behavior of eigenvalues in certain random matrix ensembles correctly describe the statistical behavior of the zeros of the Riemann zeta function. The subject continues to grow, with new applications ranging from chemistry to network theory \cite{MNS} to transportation systems \cite{BBDS,KrSe}. See \cite{FM,Hay} for a history of the development of the subject and the discovery of some of these connections.

One of the most studied matrix ensembles is the ensemble of $N \times N$ real symmetric matrices.  The $N$ entries on the main diagonal and the $\frac{1}{2}N(N-1)$ entries in the upper right are taken to be independent, identically distributed random variables from a fixed probability distribution with density $p$ having mean $0$, variance $1$, and finite higher moments.  The remaining entries are filled in so that the matrix is real symmetric. Thus \bea \mbox{Prob}(A) \ = \ \prod_{1 \le i \le j \le N}
p(a_{ij}), \ \ \ \text{Prob}\left(A: a_{ij} \in [\alpha_{ij},
\beta_{ij}]\right) \ = \ \prod_{1 \le i \le j \le N}
\int_{x_{ij}=\alpha_{ij}}^{\beta_{ij}} p(x_{ij}) dx_{ij}. \eea We want to understand the eigenvalues of $A$ as we average over the family. Let $\delta(x - x_0)$ denote the shifted Delta functional (i.e., a unit point mass at $x_0$, satisfying $\int f(x) \delta(x-x_0)dx = f(x_0)$). To each $A$ we associate its empirical spacing measure:
\bea
\mu_{A,N}(x) & \ = \ & \frac{1}{N} \sum_{i=1}^N \delta\left( x
- \frac{\lambda_i(A)}{\sqrt{N}} \right).
\eea
Using the Central Limit Theorem, one readily sees that the correct scale to study the eigenvalues is on the order of $\sqrt{N}$.\footnote{\label{foot:sizeevaluesfull}$\sum_{i=1}^N \lambda_i^2 = {\rm Trace}(A^2) = \sum_{i,j \le N} a_{ij}^2$; as the mean is zero and the variance is one for each $a_{ij}$, this sum is of the order $N^2$, implying the average square of an eigenvalue is $N$.} The most natural question to ask is: How many normalized eigenvalues of a `typical' matrix lie in a fixed interval as $N\to\infty$? Wigner proved that the answer is the semi-circle. This means that as $N\to\infty$ the empirical spacing measures of almost all $A$ converge to the density of the semi-ellipse (with our normalization), whose density is \be\label{eq:densityfwigforwigner} \twocase{f_{\rm \text{{\rm Wig}}}(x) \ = \ }{\frac{1}{\pi} \sqrt{1 - \left(\frac{x}{2}\right)^2}}{if $|x| \le 2$}{0}{otherwise;}\ee to obtain the standard semi-circle law we need to normalize the eigenvalues by $2\sqrt{N}$ and not $\sqrt{N}$.


As the eigenvalues of any real symmetric matrix are real, we can ask whether or not a limiting distribution exists for the density of normalized eigenvalues for other ensembles. There are many interesting families to study. McKay \cite{McK} proved that the limiting spectral measure for adjacency matrices attached to $d$-regular graphs on $N$ vertices exists, and as $N \to \infty$, for almost all such graphs the associated measures converge to Kesten's measure  \be \twocase{f_{{\rm Kesten},d}(x) \ = \
}{\frac{d}{2\pi(d^2-x^2)} \sqrt{4(d-1) - x^2},}{$|x| \le
2\sqrt{d-1}$}{0}{otherwise} \ee (note that the measures may be scaled such that as $d\to\infty$ they converge to the semi-circle distribution).

This example and its behavior are typical for what we hope to find and prove. Specifically, we are looking for a thin subfamily that has different behavior but, as we fatten the ensemble to the full family of all real symmetric matrices, the limiting spectral measure converges to the semi-circle. Numerous researchers have studied a multitude of special, patterned matrices; we do not attempt to do this vast subject justice, but rather concentrate on a few ensembles closely related to our work.

All of the ensembles we consider here are linked ensembles (see \cite{BanBo}).  A linked ensemble of $N \times N$ matrices is specified by a link function $L_N : \{1,2,\dots,N\}^2 \to S$ to some set $S$.  To $s \in S$, assign random variables $x_s$ which are independent, identically distributed from a fixed probability distribution with density $p$ having mean $0$, variance $1$, and finite higher moments.  Set the $(i,j)^{\rm th}$ entry of the matrix $a_{i,j} := x_{L_N(i,j)}$.\footnote{For general linked ensembles, it may make more sense to weight the random variables by how often they occur in the matrix: $a_{i,j} := c_N |L_N^{-1}(\{L_N(i,j)\})|^{-1} x_{L_N(i,j)}$.  For the real symmetric ensemble, this corresponds to weighting the entries along the diagonal by $2$.  In that case, and for the ensembles we examine here, this modification changes only lower order terms in the calculations of the limiting spectral measure.}
For some linked ensembles, including those we examine here, it is be more convenient to specify the ensemble not by the link function, but by the equivalence relation $\sim$ it induces on $\{1,2,\dots,N\}^2$.  A link function may be uncovered as the quotient map to the set of equivalence classes $\{1,2,\dots,N\}^2 \surj \{1,2,\dots,N\}^2/\sim$. For example, the real symmetric ensemble is specified by the equivalence relation $(i,j) \sim (j,i)$.


One interesting thin linked ensemble is that of real symmetric Toeplitz matrices, which are constant along its diagonals. The limiting measure is close to but not a Gaussian (see \cite{BCG,BDJ,HM}); however, in \cite{MMS} the sub-ensemble where the first row is replaced with a palindrome is shown to have the Gaussian as its limiting measure. While the approach in \cite{MMS} involves an analysis of an associated system of Diophantine equations, using Cauchy's interlacing property one can show that this problem is equivalent to determining the limiting spectral measure of symmetric circulant matrices (also studied in \cite{BM}).

While these and other ensembles related to circulant, Toeplitz, and patterned matrices are a very active area \cite{BasBo1,BasBo2,BanBo,BCG,BH,BM,BDJ,HM,MMS}, of particular interest to us are ensembles of patterned matrices with a variable parameter controlling the symmetry. We desire to deform a family of matrices, starting off with a highly structured family and ending with the essentially structureless case of real symmetric matrices. This is in contrast to some other work, such as Kargin \cite{Kar} (who studied banded Toeplitz matrices) and Jackson, Miller, and Pham \cite{JMP} (who studied Toeplitz matrices whose first row had a fixed but arbitrarily number of palindromes). In these cases the ensembles are converging to the full Toeplitz ensemble (either as the band grows or the number of palindromes decreases).

Our main ensemble is what we call the ensemble of \textbf{$m$-block circulant matrices}. A real symmetric circulant matrix (also called a symmetric circulant matrix) is a real symmetric matrix that is constant along diagonals and has first row $(x_0, x_1, x_2, \dots, x_2, x_1)$. Note that except for the main diagonal, a diagonal of length $N-k$ in the upper right is paired with a diagonal of length $k$ in the bottom left, and all entries along these two diagonals are equal. We study block Toeplitz and circulant matrices with $m \times m$ blocks.  The diagonals of such matrices are periodic of period $m$.


\begin{defi}[$m$-Block Toeplitz and Circulant Matrices]
Let $m|N$. An $N \times N$ real symmetric $m$-block Toeplitz matrix is a Toeplitz matrix of the form 
\be
\left(\begin{array}{ccccc}
B_0 & B_1 & B_2 & \cdots & B_{\nicefrac{N}{m}-1}\\
B_{-1} & B_0 & B_1 & \cdots & B_{\nicefrac{N}{m}-2}\\
B_{-2} & B_{-1} & B_0 & \cdots & B_{\nicefrac{N}{m}-3}\\
\vdots & \vdots & \vdots & \ddots & \vdots\\
B_{1-\nicefrac{N}{m}} & B_{2-\nicefrac{N}{m}} & B_{3-\nicefrac{N}{m}} & \cdots & B_0
\end{array}\right),
\nonumber\ee
with each $B_i$ an $m\times m$ real matrix.  An $m$-block circulant matrix is one of the above form for which $B_{-i} = B_{n-i}$.
\end{defi}

We investigate real symmetric $m$-block Toeplitz and circulant matrices.  In such matrices, a generic set of paired diagonals is composed of $m$ independent entries, placed periodically; however, as the matrix is real symmetric, this condition occasionally forces additional entries on the paired diagonals of length $\nicefrac{N}{2}$ to be equal.

For example, an $8 \times 8$ symmetric $2$-block Toeplitz matrix has the form
\be
\left(\begin{array}{cc|cc|cc|cc}
c_0 & c_1 & c_2 & c_3 & c_4 & c_5 & c_6 & c_7\\
c_1 & d_0 & d_1 & d_2 & d_3 & d_4 & d_5 & d_6\\ \cline{1-8}
c_2 & d_1 & c_0 & c_1 & c_2 & c_3 & c_4 & c_5\\
c_3 & d_2 & c_1 & d_0 & d_1 & d_2 & d_3 & d_4\\ \cline{1-8}
c_4 & d_3 & c_2 & d_1 & c_0 & c_1 & c_2 & c_3\\
c_5 & d_4 & c_3 & d_2 & c_1 & d_0 & d_1 & d_2\\ \cline{1-8}
c_6 & d_5 & c_4 & d_3 & c_2 & d_1 & c_0 & c_1\\
c_7 & d_6 & c_5 & d_4 & c_3 & d_2 & c_1 & d_0\\
\end{array}\right),
\ee
while a $6 \times 6$ and an $8 \times 8$ symmetric $2$-block circulant matrix have the form
\be
\left(\begin{array}{cc|cc|cc}
c_0 & c_1 & c_2 & c_3 & c_2 & d_1\\
c_1 & d_0 & d_1 & d_2 & c_3 & d_2\\ \cline{1-6}
c_2 & d_1 & c_0 & c_1 & c_2 & c_3\\
c_3 & d_2 & c_1 & d_0 & d_1 & d_2\\ \cline{1-6}
c_2 & c_3 & c_2 & d_1 & c_0 & c_1\\
d_1 & d_2 & c_3 & d_2 & c_1 & d_0
\end{array}\right),\ \ \ \ \ \ \
\left(\begin{array}{cc|cc|cc|cc}
c_0 & c_1 & c_2 & c_3 & c_4 & d_3 & c_2 & d_1\\
c_1 & d_0 & d_1 & d_2 & d_3 & d_4 & c_3 & d_2\\ \cline{1-8}
c_2 & d_1 & c_0 & c_1 & c_2 & c_3 & c_4 & d_3\\
c_3 & d_2 & c_1 & d_0 & d_1 & d_2 & d_3 & d_4\\ \cline{1-8}
c_4 & d_3 & c_2 & d_1 & c_0 & c_1 & c_2 & c_3\\
d_3 & d_4 & c_3 & d_2 & c_1 & d_0 & d_1 & d_2\\ \cline{1-8}
c_2 & c_3 & c_4 & d_3 & c_2 & d_1 & c_0 & c_1\\
d_1 & d_2 & d_3 & d_4 & c_3 & d_2 & c_1 & d_0\\
\end{array}\right);
\ee Note for the $6\times 6$ matrix that being real symmetric forces the paired diagonals of length $\nicefrac{N}{2}$ (i.e., 3) to have just one and not two independent random variables. An equivalent viewpoint is that each `wrapped' diagonal is periodic with period $m$ and has $m$ distinct random variables. Note that the diagonals are wrapped toroidally, and each such diagonal has $N$ elements.

Clearly if $m=1$ these ensembles reduce to the previous cases, and as $m\to\infty$ they approach the full family of real symmetric matrices; in other words, the circulant or Toeplitz structure vanishes as $m\to\infty$, but for any finite $m$ there is additional structure. The goal of this paper is to determine the limiting spectral measures for these families and to quantify how the convergence to the semi-circle depends on $m$.  We find an explicit closed form expression for the limiting spectral density of the $m$-block circulant family as a product of a Gaussian and a degree $2m-2$ polynomial.

\subsection{Results}

Before stating our results, we must define the probability spaces where our ensemble lives and state the various types of convergence that we can prove. We provide full details for the $m$-block circulant matrices, as the related Toeplitz ensemble is similar. The following definitions and set-up are standard, but are included for completeness. We paraphrase from \cite{MMS,JMP} with permission.

Fix $m$ and for each integer $N$ let $\Omega_{m,N}$ denote the set of $m$-block circulant matrices of dimension $N$.  Define an equivalence relation $\simeq$ on $\{1,2,\dots,N\}^2$.  Say that $(i,j) \simeq (i^\p,j^\p)$ if and only if $a_{i j} = a_{i^\p j^\p}$ for all $m$-block circulant matrices, in other words, if
\begin{itemize}
\item
$j-i \con j^\p-i^\p \Mod{N} \mbox{ and } i \con i^\p \Mod{m}, \mbox{ or}$
\item
$j-i \con -(j^\p-i^\p) \Mod{N} \mbox{ and } i \con j^\p \Mod{m}.$
\end{itemize}

Consider the quotient $\{1,2,\dots,N\}^2 \surj \{1,2,\dots,N\}^2/\simeq$.  This induces an injection $\R^{\{1,2,\dots,N\}^2/\simeq}$ $\inj$ $\R^{N^2}$.  The set $\R^{\{1,2,\dots,N\}^2/\simeq}$ has the structure of a probability space with the product measure of $p(x)\,dx$ with itself $|\{1,2,\dots,N\}^2/\simeq|$ times, where $dx$ is Lebesgue measure.  We define the probability space $(\Omega_{m,N},\f_{m,N},\pp_{m,N})$ to be its image in $\R^{N^2} = M_{N^2}(\R)$ under the injection, with the same distribution.

To each $A_N\in \Omega_{m,N}$ we attach a
measure by placing a point mass of size $\nicefrac{1}{N}$ at each normalized
eigenvalue $\lambda_i(A_N)$: \be\label{eq:normspacingmeasure}
\mu_{m, A_N}(x)dx \ = \ \frac{1}{N} \sum_{i=1}^N \delta\left( x -
\frac{\lambda_i(A_N)}{\sqrt{N}} \right)dx, \ee where $\delta(x)$ is the
standard Dirac delta function; see Footnote \ref{foot:sizeevaluesfull} for an explanation of the normalization factor equaling $\sqrt{N}$. We call $\mu_{m, A_N}$ the normalized
spectral measure associated with $A_N$.


\begin{defi}[Normalized empirical spectral
distribution]\label{defi:nesd} Let $A_N \in \Omega_{m,N}$ have eigenvalues $\lambda_N \ge \cdots \ge
\lambda_1$. The normalized empirical spectral distribution (the
empirical distribution of normalized eigenvalues) $F_m^{\nicefrac{A_N}{\sqrt{N}}}$
is defined by \be F_m^{\nicefrac{A_N}{\sqrt{N}}}(x) \ = \frac{\#\{i \le N:
\nicefrac{\lambda_i}{\sqrt{N}} \le x\}}{N}. \ee \end{defi}

As $F_m^{\nicefrac{A_N}{\sqrt{N}}}(x) = \int_{-\infty}^x \mu_{m, A_N}(t)dt$, we see
that $F_m^{\nicefrac{A_N}{\sqrt{N}}}$ is the cumulative distribution function
associated to the measure $\mu_{n, A_N}$. We are interested in the behavior of a typical $F_m^{\nicefrac{A_N}{\sqrt{N}}}$ as we vary $A_N$ in our ensembles $\Omega_{m,N}$ as $N\to\infty$.

Consider any probability space $\Omega_m$ which has the $\Omega_{m,N}$ as quotients.  (The most obvious example is the independent product.)  This paper build on a line of papers \cite{HM, MMS, JMP} concerning various Toeplitz ensembles which fix $\Omega_m$ to be the space of $\N$-indexed strings of real numbers picked independently from $p$, with quotient maps to each $\Omega_{m,N}$ mapping a string to a matrix whose free parameters come from an initial segment of the right length.  There is no need for the specificities of this construction, so we consider the general case.

%
%

\begin{defi}[Limiting spectral distribution]\label{defi:lsd}
If as $N\to\infty$ we have $F_m^{\nicefrac{A_N}{\sqrt{N}}}$ converges in some
sense (for example, in probability or almost surely) to a distribution $F_m$,
then we say $F_m$ is the limiting spectral distribution of the
ensemble.
\end{defi}

We investigate the symmetric $m$-block Toeplitz and circulant ensembles. We may view these as structurally weakened real symmetric Toeplitz and circulant ensembles. When $m$ is $1$ we regain the Toeplitz (circulant) structure, while if $m=N$ we have the general real symmetric ensemble.  If $m$ is growing with the size of the matrix, we expect the eigenvalues to be distributed according to the semi-circle law, while for fixed $m$ we expect to see new limiting spectral distributions.

Following the notation of the previous subsection, for each integer $N$ we let $\Omega^{(T)}_{m,N}$ and $\Omega^{(C)}_{m,N}$ denote the probability space of real symmetric $m$-block Toeplitz and circulant matrices of dimension $N$, respectively. We now state our main results.

\begin{thm}[Limiting spectral measures of symmetric block Toeplitz and circulant ensembles]\label{thm:mainexpformulasconvergence} Let $m|N$.

\begin{enumerate}

\item The characteristic function of the limiting spectral measure of the symmetric $m$-block circulant ensemble is
\bea\label{phiform}
\phi_m(t) \ = \  \frac{1}{m} e^{-\nicefrac{t^2}{2m}}e^{-\nicefrac{t^2}{2m}} L_{m-1}^{(1)}\left(\frac{t^2}{m}\right) \ = \ e^{-\nicefrac{t^2}{2m}} M\left(m+1,2,-\nicefrac{t^2}{m}\right),
\eea
where $L_{m-1}^{(1)}$ is a generalized Laguerre polynomial and $M$ a confluent hypergeometric function.  The expression equals the spectral characteristic function for the $m \times m$ GUE.  The limiting spectral density function (the Fourier transform of $\phi_m$) is
\be
f_m(x)\ =\ \frac{ e^{-\nicefrac{mx^2}{2}}}{\sqrt{2\pi m}} \sum_{r=0}^{m-1} \frac{1}{(2r)!}
\left( \sum_{s=0}^{m-r} {m \choose r+s+1} \frac{(2r+2s)!}{(r+s)! s!} \left(-\foh \right)^s \right)
(m x^2)^r. \ee For any fixed $m$, the limiting spectral density is the product of a Gaussian and an even polynomial of degree $2m-2$, and has unbounded support.

\item If $m$ tends to infinity with $N$ (at any rate) then the limiting spectral distribution of the symmetric $m$-block circulant and Toeplitz ensembles, normalized by rescaling $x$ to $x/2$, converge to the semi-circle distribution; without the renormalization, the convergence is to a semi-ellipse, with density $f_{\rm Wig}$ (see \eqref{eq:densityfwigforwigner}).

\item As $m \to \infty$, the limiting spectral measures $f_m$ of the $m$-block circulant ensemble converge uniformly and in $L^p$ for any $p \ge 1$ to $f_{\rm Wig}$, with $|f_m(x) - f_{\rm Wig}(x)| \ll m^{-\nicefrac{2}{9}+\epsilon}$ for any $\epsilon>0$.

\item The empirical spectral measures of the $m$-block circulant and Toeplitz ensembles converge weakly and in probability to their corresponding limiting spectral measures, and we have almost sure convergence if $p$ is an even function.

\end{enumerate}

\end{thm}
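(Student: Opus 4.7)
The plan is to combine the method of moments with a topological interpretation of the resulting combinatorial sum, together with standard Fourier-analytic and concentration arguments for the remaining parts.

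For Part (1), I begin from the trace expansion
\[\E\left[\int x^{2k}\,d\mu_{m,A_N}(x)\right] \ = \ \frac{1}{N^{k+1}} \sum_{i_1,\dots,i_{2k}=1}^{N} \E\bigl[a_{i_1 i_2} a_{i_2 i_3} \cdots a_{i_{2k} i_1}\bigr].\]
Mean-zero independence forces each edge $(i_s,i_{s+1})$ to share a $\simeq$-class with at least one other edge, and a pigeonhole estimate shows that tuples with a triple occurrence contribute $O(N^{-1})$; thus only perfect matchings of the $2k$ edges survive the $N\to\infty$ limit, and odd moments vanish. For a fixed matching the $\simeq$-conditions split into congruences mod $N$ (toroidal diagonal structure) and mod $m$ (block-periodic refinement). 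Interpreting the matching as an edge identification on a $2k$-gon produces a closed surface $\Sigma$; circulant symmetry ensures only orientable gluings contribute at leading order. A careful count --- the topological step advertised in the abstract --- shows that the number of valid index tuples for a matching whose surface has genus $g$ is asymptotic to $N^{k+1}\,m^{-2g}$. Dividing by $N^{k+1}$ and sending $N\to\infty$,
\[M_{2k} \ = \ \sum_{g \ge 0} \epsilon_g(k)\, m^{-2g},\]
where $\epsilon_g(k)$ is the number of matchings producing an orientable genus-$g$ surface. This is precisely the Harer--Zagier expansion of the $2k$-th moment of the $m \times m$ GUE, so the limiting characteristic function equals $\phi_m$; Fourier inversion via the Laguerre-polynomial identity for the GUE level density then produces the explicit formula for $f_m$, and the decay rates of the Laguerre polynomial together with the Gaussian factor make unbounded support and the stated $2m-2$ degree immediate.

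For Part (2), sending $m\to\infty$ in the genus expansion kills all $g\ge 1$ terms and leaves only the planar count $\epsilon_0(k) = C_k$, the $2k$-th moment of the semicircle $f_{\rm Wig}$. The Toeplitz case follows by an analogous moment calculation once one verifies that orientable gluings still dominate and that the additional Toeplitz/circulant discrepancy sits in lower-order terms. For Part (3), I Fourier-invert the difference $\phi_m - \phi_{\rm Wig}$ and split at $|t| = T$: on $|t|\le T$ the Laguerre expansion yields a polynomial-in-$T$ bound with an $m^{-1}$ savings from the first nontrivial correction, while on $|t|>T$ the Gaussian envelope $e^{-t^2/(2m)}$ controls the tail; optimizing $T$ produces the exponent $m^{-2/9+\epsilon}$, and $L^p$ bounds follow from the uniform bound combined with the exponential tail of $f_m$.

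For Part (4), convergence in probability reduces to the second-moment estimate
\[\operatorname{Var}\left(\int x^{2k}\,d\mu_{m,A_N}(x)\right) \ \ll \ \frac{1}{N},\]
obtained by expanding the squared trace and noting that matchings pairing edges across the two copies contribute an additional $N^{-1}$ by the same surface-counting argument. Almost-sure convergence when $p$ is even follows by bounding the fourth central moment --- where evenness eliminates certain awkward odd cross-terms --- and applying Borel--Cantelli. The main obstacle is the combinatorial-topological identification in Part (1): proving that the interaction of the mod-$m$ and mod-$N$ constraints weights each matching by exactly $N^{k+1}\,m^{-2g}$, so that the limiting moments reconstruct the GUE genus expansion, and simultaneously confirming that non-orientable gluings are subdominant. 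Once this identification is in place, the remaining assertions follow from the explicit form of $\phi_m$ together with standard concentration arguments.
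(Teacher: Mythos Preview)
Your outline for Parts (1), (2), and (4) tracks the paper closely: trace expansion, reduction to pair matchings, the topological identification giving each matching weight $m^{-2g}$, Harer--Zagier to sum over genera, and concentration via variance/fourth-moment bounds (the paper simply cites \cite{HM,MMS,JMP} for the last). One small sharpening for the Toeplitz half of Part (2): what you need is not merely that orientable gluings dominate, but that the genus-$0$ (non-crossing) matchings contribute \emph{fully}, i.e.\ exactly $1$ each, despite the Diophantine obstructions that plague higher-genus matchings in the Toeplitz case. The paper proves this by an induction on non-crossing configurations; your phrasing ``the Toeplitz/circulant discrepancy sits in lower-order terms'' does not quite capture this, since for fixed $m$ the discrepancy at positive genus is \emph{not} lower-order. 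For Part (1), note that the paper derives the closed form for $\phi_m$ directly via a contour integral extracting the diagonal of a product of generating functions, rather than by invoking the GUE density; your route (identifying the moments as GUE moments and quoting the known Laguerre formula) is a legitimate alternative.

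Part (3) has a real gap. Your tail estimate is incorrect: the Gaussian factor $e^{-t^2/(2m)}$ does \emph{not} control $\phi_m(t)$ for large $t$, because the Laguerre polynomial $L_{m-1}^{(1)}(t^2/m)$ grows like $e^{t^2/(2m)}$ (see the standard asymptotic, e.g.\ 8.978(3) of \cite{GR}), and the two exponentials cancel, leaving $\phi_m(t)\asymp t^{-3/2}$ --- the same decay as $\phi(t)=J_1(2t)/t$. So the tail $\int_{|t|>T}|\phi_m-\phi|\,dt$ is only $O(T^{-1/2})$, not exponentially small in $T$. On the small-$t$ side, the saving is also not $m^{-1}$: the paper uses the expansion 13.3.7 of \cite{AS} to write $\phi_m(t)=J_1(2t)/t+\sum_{n\ge1}A_n(2m)^{-n}(-t)^{n-1}J_{n+1}(2t)$, where the recursion $A_{n+1}=A_{n-1}+\tfrac{2m}{n+1}A_{n-2}$ forces $A_n\ll m^{rn}$ only for $r>1/3$, yielding $|\phi_m-\phi|\ll m^{-(1-r)}$ on $|t|\le m^{\frac23(1-r)}$. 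Balancing the $T^{-1/2}$ tail against this at $T=m^{\frac23(1-r)}$ gives $m^{-(1-r)/3}$, and $r\downarrow 1/3$ produces the exponent $2/9$. Your stated mechanism (Gaussian tail plus $m^{-1}$ on the bulk) would not reproduce this; you need the Laguerre asymptotic for the tail and the confluent-hypergeometric-to-Bessel expansion for the bulk.
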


Figure \ref{figure:plotsconvsemicircle} illustrates the convergence of the limiting measures to the semi-circle; numerical simulations (see Figures \ref{figure:hist2hist3}, \ref{figure:hist4hist8} and \ref{figure:hist1hist20}) illustrate the rapidity of the convergence. We see that even for small $m$, in which case there are only $\nicefrac{mN}{2}$ non-zero entries in the adjacency matrices (though these can be any of the $N^2-N$ non-diagonal entries of the matrix), the limiting spectral measure is close to the semi-circle. This behavior is similar to what happens with $d$-regular graphs, though in our case the convergence is faster and the support is unbounded for any finite $m$.
\begin{center}
\begin{figure}
\centering\includegraphics[scale=1.25]{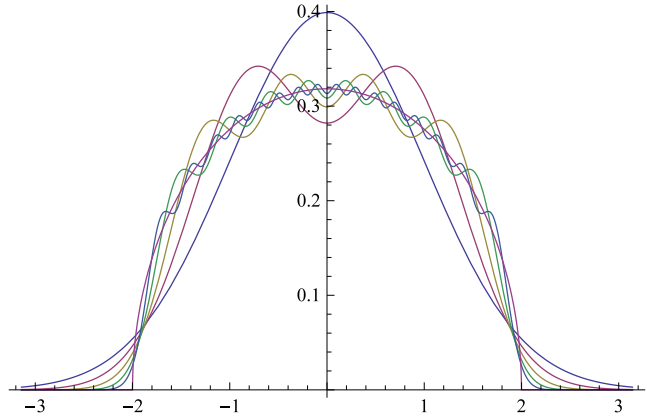}
\caption{Plots for $f_1, f_2, f_4, f_8, f_{16}$ and the semi-circle density.}\label{figure:plotsconvsemicircle}
\end{figure}
\end{center}


\begin{figure}
\centering\includegraphics[scale=.7]{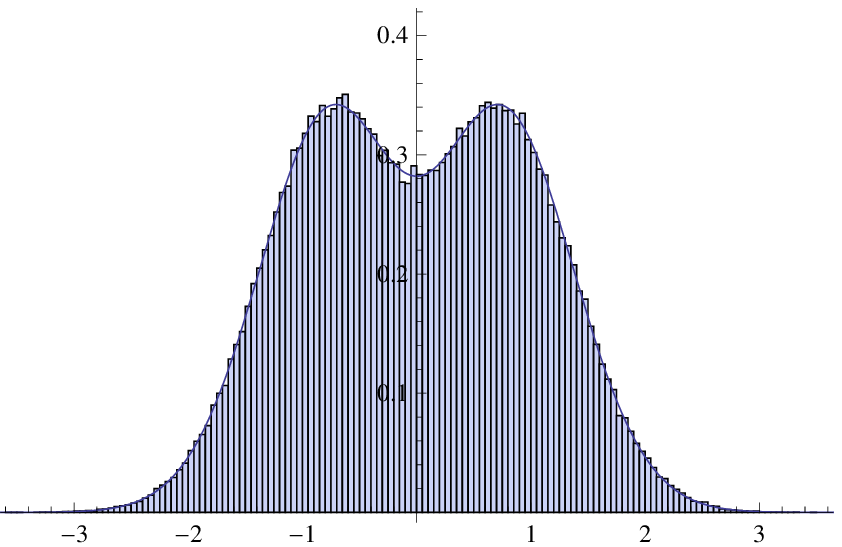}\ \centering\includegraphics[scale=.7]{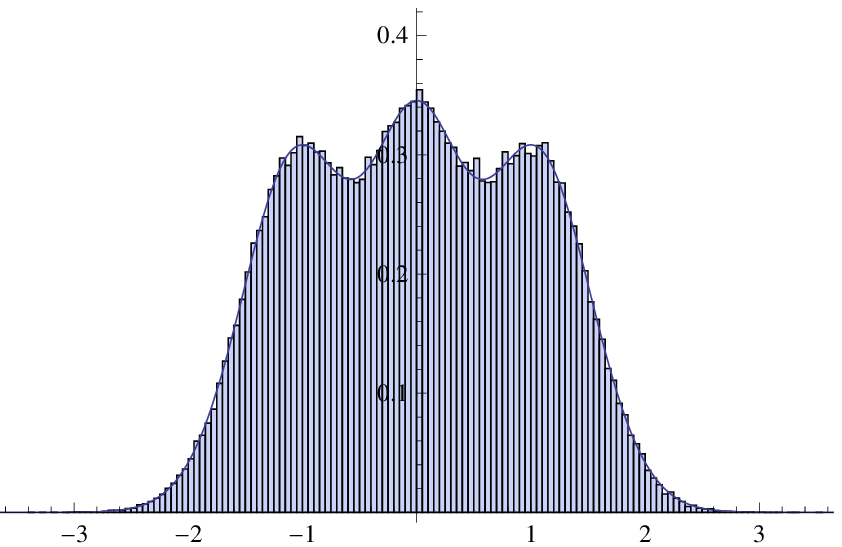}
\caption{(Left) Plot for $f_2$ and histogram of eigenvalues of 1000 symmetric period $2$-block circulant matrices of size $400 \times 400$. (Right) Plot for $f_3$ and histogram of eigenvalues of 1000 symmetric period $3$-block circulant matrices of size $402 \times 402$.}\label{figure:hist2hist3}
\centering\includegraphics[scale=.7]{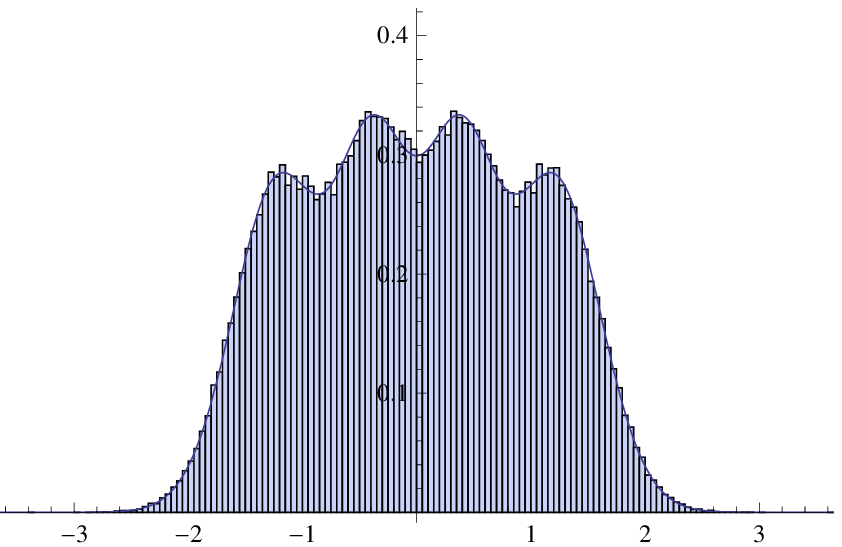}\ \centering\includegraphics[scale=.7]{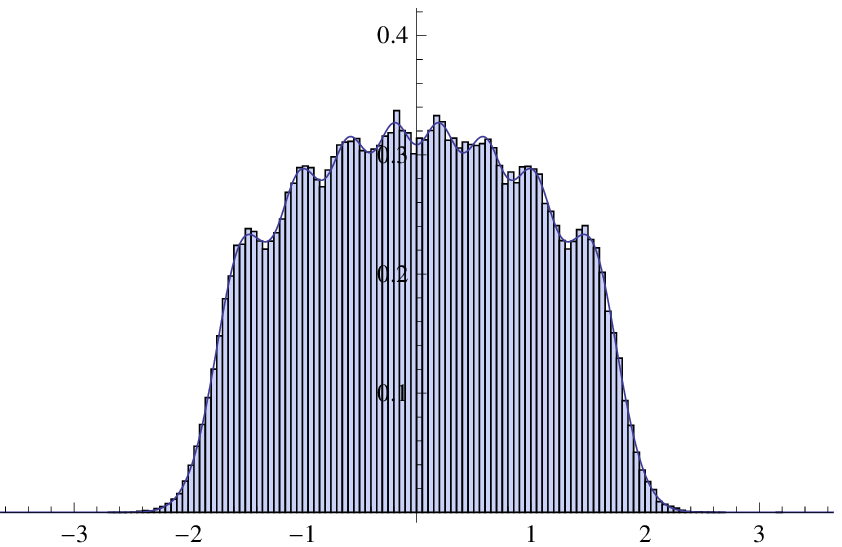}
\caption{(Left) Plot for $f_4$ and histogram of eigenvalues of 1000 symmetric period $4$-block circulant matrices of size $400 \times 400$. (Right) Plot for $f_8$ and histogram of eigenvalues of 1000 symmetric period $8$-block circulant matrices of size $400 \times 400$.}\label{figure:hist4hist8}
\centering\includegraphics[scale=.7]{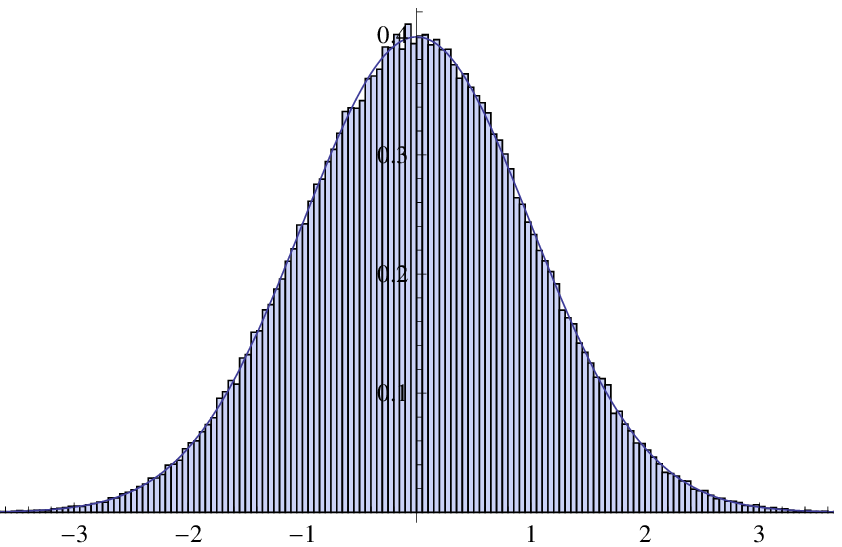}\ \centering\includegraphics[scale=.7]{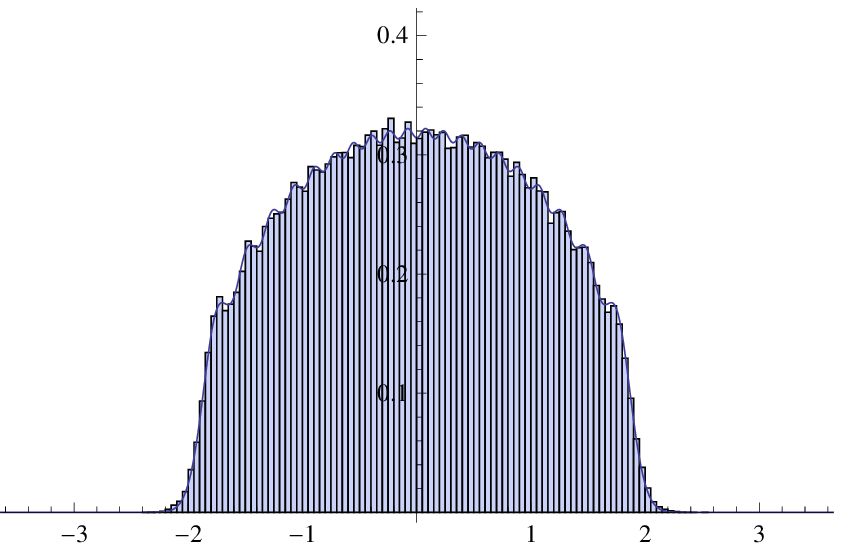}
\caption{(Left) Plot for $f_1$ and histogram of eigenvalues of 1000 symmetric period $1$-block circulant matrices of size $400 \times 400$. (Right) Plot for $f_{20}$ and histogram of eigenvalues of 1000 symmetric period $20$-block circulant matrices of size $400 \times 400$.}\label{figure:hist1hist20}
\end{figure}


Finally, the limiting eigenvalue density for $m$-block circulant matrices is the same as the eigenvalue density of a certain Gaussian Hermitian ensemble.  Specifically, we consider $m \times m$ Hermitian matrices with off-diagonal entries picked independently from a complex Gaussian with density function $p(z) = \frac{1}{\pi}e^{-|z|^2}$, and diagonal entries picked independently from a real Gaussian of mean $0$ and variance $1$.  We provide a heuristic for why these densities are the same in \S\ref{sec:repn}; see also \cite{Zv} (especially Section 5.2) for a proof.\\

Our results generalize to related ensembles. For example, the (wrapped) diagonals of our $m$-block circulant ensembles have the following structure (remember we assume $m|N$): \be (b_{1,j}, b_{2,j}, \dots, b_{m,j}, b_{1,j}, b_{2,j}, \dots, b_{m,j}, \dots, b_{1,j}, b_{2,j}, \dots, b_{m,j}). \ee Note that we have a periodic repeating block of size $m$ with $m$ independent random variables; for brevity, we denote this structure by \be (d_1, d_2, \dots, d_m). \ee Similar arguments handle other related ensembles, such as the subfamily of period $m$--ciculant matrices in which some entries within the period are forced to be equal. Interesting comparisons are $(d_1, d_2) = (d_1,d_2,d_1,d_2)$ versus $(d_1, d_1, d_2, d_2)$ or $(d_1, d_2, d_2, d_1)$.  While it is a natural guess that the limiting spectral measure is determined solely by the frequency at which each letter appears, this is false.

\begin{thm}\label{thm:extlsd} Let $\mathcal{P} = (d_{i_1}, d_{i_2}, \dots, d_{i_m})$ where each $d_{i_j} \in \{d_1,\dots,d_\nu\}$ and each $d_i$ occurs exactly $r_i$ times in the pattern $\mathcal{P}$, with $r_1 +\cdots+r_\nu = m$; equivalently, $\mathcal{P}$ is a permutation of $(d_1,\dots,d_1,d_2,\dots,d_2, \dots, d_\nu,\dots,d_\nu)$ with $r_i$ copies of $d_i$. Modify the $N\times N$ period $m$-block circulant matrices by replacing the pattern $(d_1,d_2,\dots,d_m)$ with $\mathcal{P}$ (remember $m|N$). Then for any $\mathcal{P}$ as $N\to\infty$ the limiting spectral measure exists. The resulting measure does not depend solely on the frequencies of the letters in the pattern but also on their locations; in particular, while the fourth moments of the measures associated to $\{d_1,d_2,d_1,d_2\}$ and $\{d_1,d_1,d_2,d_2\}$ are equal (interestingly, the fourth moment of any pattern only depends on the frequencies), the sixth moments differ.
\end{thm}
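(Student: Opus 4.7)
My plan is to extend the eigenvalue-trace method of moments from the $m$-block circulant ensemble to the pattern-modified ensemble. Fix the pattern $\mathcal{P}=(d_{i_1},\ldots,d_{i_m})$. The modified ensemble is defined by coarsening the relation $\simeq$: cells $(i,j)$ and $(i',j')$ are identified if $j-i\equiv\pm(j'-i')\bmod N$ and the pattern letters at the appropriate residues coincide. The $k$-th moment of the normalized empirical spectral measure is
\[
M_k(N)\ =\ \frac{1}{N^{1+k/2}}\sum_{i_1,\ldots,i_k=1}^{N}\E[a_{i_1 i_2}a_{i_2 i_3}\cdots a_{i_k i_1}],
\]
with the expectation factoring according to the partition of $\{1,\ldots,k\}$ grouping edges carrying equal random variables.

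\textbf{Existence of the limiting distribution.} Since the base variables are mean zero with finite higher moments, only pair partitions contribute at order $N^{1+k/2}$ and odd moments vanish. For each pair partition $\pi$ of $\{1,\ldots,2\ell\}$ together with a sign choice on each pair, the admissible $(i_1,\ldots,i_{2\ell})$ satisfy a linear system modulo $N$ on the diagonal increments $k_t=i_{t+1}-i_t$, together with pattern constraints $\mathcal{P}_{i_s\bmod m}=\mathcal{P}_{i_t\bmod m}$ (or the reflected analogue) on each paired index. Arguing exactly as in the $m$-block circulant proof, the solution set has size $N^{1+\ell}c_\pi(\mathcal{P})+O(N^\ell)$ for an explicit combinatorial constant $c_\pi(\mathcal{P})$, so each even moment converges to a finite limit $M_{2\ell}(\mathcal{P})$. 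Because $c_\pi(\mathcal{P})$ is bounded above by the corresponding constant when all $d_i$ are distinct, the limiting moments are dominated by those of the standard $m$-block circulant measure, which satisfies Carleman's condition; this forces uniqueness of the limiting distribution. The variance estimate $\mathrm{Var}(M_k(N))=o(1)$ from the $m$-block circulant argument carries over with essentially the same combinatorics, upgrading convergence in expectation to convergence in probability.

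\textbf{Arrangement-dependence.} To show that the measure really depends on arrangement, I would compute low moments explicitly. For the fourth moment I would enumerate the three pair partitions on four edges and the two sign choices per pair, and show after combining contributions that the dependence on $\mathcal{P}$ enters only through the letter-frequencies; the underlying reason is that each pair's residue constraint reduces to a sum indexed by pairs $(r,r')$ with $\mathcal{P}_r=\mathcal{P}_{r'}$, which after averaging over diagonal increments depends only on $\sum_\ell n_\ell^2$. For the sixth moment at $m=4$ with two letters, three pairings couple through the mod-$m$ increments and the constraints no longer decouple: the alternating pattern $(d_1,d_2,d_1,d_2)$ enjoys an extra period-$2$ sub-symmetry absent from $(d_1,d_1,d_2,d_2)$, creating admissible triples of paired edges that survive in $\mathcal{P}_1$ but not in $\mathcal{P}_2$. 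Computing the two sixth moments symbolically should exhibit a surviving discrepancy and prove $M_6(\mathcal{P}_1)\neq M_6(\mathcal{P}_2)$.

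The principal obstacle is the sixth-moment bookkeeping: across fifteen pair partitions with two sign choices per pair, one must ensure the arrangement-dependent contributions do not cancel in total. The cleanest tactic is to recast the enumeration using the topological/Euler-characteristic framework referenced in the paper's abstract for the $m$-block circulant case, attaching to each ribbon graph an additional ``coloring'' by pattern letters so that the sixth moment becomes a sum of weighted colored-graph counts. I would then identify a specific genus-zero ribbon graph whose colored count demonstrably differs between $\mathcal{P}_1$ and $\mathcal{P}_2$, and verify that higher-genus contributions (which are individually smaller and share the same color structure) cannot mask the discrepancy.
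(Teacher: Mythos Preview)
Your existence argument has a reversed inequality: coarsening the equivalence relation by allowing repeated letters \emph{enlarges} the set of admissible tuples, so $c_\pi(\mathcal{P})$ is bounded \emph{below}, not above, by the all-distinct $m$-block circulant constant. The correct upper bound is the symmetric circulant ensemble (period one), whose limiting moments are Gaussian and do satisfy Carleman; once you swap in that bound the uniqueness step goes through. Relatedly, your fourth-moment heuristic points to $\sum_\ell n_\ell^2$, but the actual frequency-only formula the paper establishes is $2+\sum_\ell (r_\ell/m)^3$; the cubes arise because the diagonal (crossing) matching forces a chain of \emph{three} related residues, not two.

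The more serious gap is your plan for the sixth moment. You propose to locate the discrepancy in a genus-zero ribbon graph and then argue higher-genus terms cannot cancel it. That will not work: the genus-zero (non-crossing, Catalan) matchings contribute fully for \emph{every} pattern, because each adjacent pair forces an equality $i_s=i_{s+2}$ that makes the modulo condition automatic regardless of $\mathcal{P}$. The arrangement-dependence lives entirely in the crossing (genus-one) matchings, and it enters through what the paper calls ``modulo obstructions'': for $\{a,b,a,b\}$ the relation $\mathcal{R}$ is a genuine $\bmod\ 2$ congruence, so a constraint like $i\mathcal{R}j$ together with the diagonal relation $i-j=m-l$ automatically forces $m\mathcal{R}l$; for $\{a,a,b,b\}$ the relation is \emph{not} translation-invariant, so this implication fails and some configurations that contribute for the first pattern become trivial for the second. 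The paper carries this out by brute-force enumeration of the fifteen matchings, showing the non-crossing and ``semi-adjacent'' types give the same $6.5$ for both patterns while the two fully diagonal types contribute $1$ for $\{a,b,a,b\}$ but strictly less for $\{a,a,b,b\}$. Your colored-ribbon-graph reformulation could still be made to work, but you must look at a genus-one graph and track carefully how the non-congruence nature of $\mathcal{R}$ for the blocked pattern breaks the automatic propagation of constraints along the diagonal relations.
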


We prove our main results using the method of moments. As the proof of Theorem \ref{thm:extlsd} is similar to that of Theorem \ref{thm:mainexpformulasconvergence}, we just sketch the ideas and computations in Appendix \ref{sec:appwentao}.  For our ensembles, we first show that the average of the $k$\textsuperscript{th} moments over our ensemble converge to the moments of a probability density. By studying the variance or fourth moment of the difference of the moments of the empirical spectral measures and the limits of the average moments, we obtain the various types of convergence by applications of Chebyshev's inequality and the Borel-Cantelli Lemma. These arguments are similar to previous works in the literature, and yield only the existence of the limiting spectral measure.

Unlike other works for related ensembles, however, we are able to obtain explicit closed form expressions for the moments for the symmetric $m$-block circulant ensemble. This should be compared to the Toeplitz ensemble case, where previous studies could only relate these moments to volumes of Eulerian solids or solutions to systems of Diophantine equations. Similar to other ensembles, we show that the only contribution in the limit is when $k = 2\ell$ and the indices are matched in pairs with opposite orientation. We may view this as a $2\ell$-gon with vertices $(i_1,i_2)$, $(i_2,i_3)$, $\dots$, $(i_{2\ell},i_1)$. The first step is to note that when $m=1$, similar to the circulant and palindromic Toeplitz ensembles, each matching contributes 1; as there are $(2\ell-1)!!$ ways to match $2\ell$ objects in pairs, and as $(2\ell-1)!!$ is the $2\ell$\textsuperscript{th} moment of the standard normal, this yields the Gaussian behavior. For general $m$, the key idea is to look at the dual picture. Instead of matching indices we match edges. In the limit as $N\to\infty$, the only contribution occurs when the edges are matched in pairs with opposite orientation. Topologically, these are exactly the pairings which give orientable surfaces. If $g$ is the genus of the associated surface, then the matching contributes $m^{-2g}$. Harer and Zagier \cite{HarZa} determined formulas for $\varepsilon_g(\ell)$, the number of matchings that form these orientable surfaces. This yields the $N\to\infty$ limit of the average $2\ell$\textsuperscript{th} moment is \be \sum_{g=0}^{\lfloor \nicefrac{\ell}{2} \rfloor}\varepsilon_g(\ell) m^{-2g}.\ee After some algebra, we express the characteristic function (which is the inverse Fourier transform; see Footnote \ref{foot:charfnfootnote}) of the limiting spectral measure as a certain term in the convolution of the associated generating function of the $\varepsilon_g$'s and the normal distribution, which we can compute using Cauchy's residue theorem. Taking the Fourier transform (appropriately normalized) yields an explicit, closed form expression for the density. We note that the same formulas arise in investigations of the moments for Gaussian ensembles; see Section 1.6 of \cite{Fo} and \cite{Zv} (as well as the references therein) for additional comments and examples.

The paper is organized as follows. In \S\ref{sec:trace} we describe the method of proof and derive useful expansions for the moments in terms of quantities from algebraic topology. We use these in \S\ref{sec:determiningevenmoments} to determine the limiting spectral measures, and show convergence in \S\ref{sec:convergence}. We conclude in \S\ref{sec:futurework} with a description of future work and related results. Appendix \ref{sec:convrate} provides some needed estimates for proving the rate of convergence in Theorem \ref{thm:mainexpformulasconvergence}, and we conclude in Appendix \ref{sec:appwentao} with a discussion of the proof of Theorem \ref{thm:extlsd}  (see \cite{Xi} for complete details).


\section{Moments Preliminaries}\label{sec:trace}

In this section we investigate the moments of the associated spectral measures. We first describe the general framework of the convergence proofs and then derive useful expansions for the average moments for our ensemble for each $N$ (Lemma \ref{lem:oddmomentsandevenformula}). The average odd moments are easily seen to vanish, and we find a useful expansion for the $2k$\textsuperscript{th} moment in Lemma \ref{lem:formulaforevenmomentsepsilon}, relating this moment to the number of pairings of the edges of a $2k$-gon giving rise to a genus $g$ surface

\subsection{Markov's Method of Moments}


For the eigenvalue density of a particular $N \times N$ symmetric $m$-block circulant matrix $A$, we use the redundant notation $\mu_{m, A, N}(x)\,dx$ (to emphasize the $N$ dependence), setting
\be
\mu_{A,N}(x)\,dx\ :=\ \frac{1}{N} \sum_{i=1}^N \delta\left(x-\frac{\lambda_i(A)}{\sqrt{N}}\right)\,dx.
\ee
To prove Theorem \ref{thm:mainexpformulasconvergence}, we must show

\begin{enumerate}

\item as $N\to\infty$ a typical matrix has its spectral measure close to the system average;

\item these system averages converge to the claimed measures.

\end{enumerate}

The second claim follows easily from Markov's Method of Moments, which we now briefly describe. To each integer $k \ge 0$ we define the random variable $X_{k;m,N}$ on
$\Omega_{m}$ by \be X_{k;m,N}(A) \ = \ \int_{-\infty}^\infty x^k
\,dF_m^{\nicefrac{A_N}{\sqrt{N}}}(x); \ee note this is the $k$\textsuperscript{{\rm th}}
moment of the measure $\mu_{m, A, N}$.

Our main tool to understand the average over all $A$ in our ensemble of the $F_m^{\nicefrac{A_N}{\sqrt{N}}}$'s is the Moment
Convergence Theorem (see \cite{Ta} for example); while the analysis in \cite{MMS} was simplified by the fact that the convergence was to the standard normal, similar arguments (see also \cite{JMP}) hold in our case as the growth rate of the moments of our limiting distribution implies that the moments uniquely determine a probability distribution.

\begin{thm}[Moment Convergence Theorem]\label{thm:momct} Let $\{F_N(x)\}$ be a
sequence of distribution functions such that the moments \be M_{k;N}
\ = \ \int_{-\infty}^\infty x^k dF_N(x) \ee exist for all $k$. Let $\{M_k\}_{k=1}^\infty$ be a sequence of moments that uniquely determine a probability distribution, and denote the cumulative distribution function by $\Psi$. If
$\lim_{N\to\infty} M_{k,N} = M_k$ then $\lim_{N\to\infty} F_N(x) =
\Psi(x)$. \end{thm}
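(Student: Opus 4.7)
The plan is to prove the Moment Convergence Theorem by a standard tightness-plus-uniqueness argument built around Helly's selection principle, exactly as in classical references such as Billingsley. The hypothesis that $M_{k;N} \to M_k$ for every $k$ is more than enough; only the second and $(2k)$-th moments will actually be used at each stage.

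First I would establish tightness of $\{F_N\}$. Since $M_{2;N} \to M_2$, the second moments are uniformly bounded in $N$, and Chebyshev's inequality gives
\be
1 - F_N(R) + F_N(-R) \ \le\ \frac{M_{2;N}}{R^2},
\ee
which tends to $0$ uniformly in $N$ as $R\to\infty$. By Helly's selection theorem every subsequence of $\{F_N\}$ admits a further subsequence $\{F_{N_j}\}$ converging weakly to some non-decreasing right-continuous function $G$; tightness guarantees that $G$ is an honest cumulative distribution function rather than a defective one.

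The crucial step is to verify that $G$ has moments $\{M_k\}$. For this I would truncate: for fixed $k$ and $R > 0$,
\be
\left|\int_{|x|>R} x^k\, dF_{N_j}(x)\right| \ \le\ \frac{1}{R^{k}} \int x^{2k}\, dF_{N_j}(x) \ = \ \frac{M_{2k;N_j}}{R^k},
\ee
which is uniformly small in $j$ because $M_{2k;N_j} \to M_{2k}$. On $[-R,R]$, weak convergence together with continuity and boundedness of $x^k$ lets me pass to the limit (via a continuous cutoff of $x^k$ agreeing with it on $[-R,R]$). Letting $R \to \infty$ gives $\int x^k\, dG(x) = M_k$ for every $k$. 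Since $\{M_k\}$ determines a unique probability distribution by hypothesis, $G = \Psi$. Consequently every subsequence of $\{F_N\}$ contains a further subsequence weakly convergent to the same limit $\Psi$, which forces the entire sequence to converge weakly to $\Psi$. In our applications $\Psi$ has a continuous density (a Gaussian times a polynomial), so $\Psi$ has no points of discontinuity and weak convergence upgrades to pointwise convergence at every $x$, as claimed.

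The main obstacle is the moment-exchange step: weak convergence alone does not transport $k$-th moments across, because $x^k$ is unbounded. Uniform integrability of $x^k$ under $F_{N_j}$ is supplied by the convergence (hence boundedness) of the next even moment $M_{2k;N}$, and this is precisely why the hypothesis demands convergence of \emph{all} moments rather than only the first few. Everything else is bookkeeping.
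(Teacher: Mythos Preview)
Your argument is correct and is precisely the classical tightness--Helly--uniqueness proof of the moment convergence theorem. The paper does not give its own proof of this statement: it simply records the theorem and cites \cite{Ta}, so there is nothing in the paper to compare your proof against.
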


We will see that the average moments uniquely determine a measure, and will be left with proving that a typical matrix has a spectral measure close to the system average. The $n$\textsuperscript{th} moment of $A$'s measure, given by integrating $x^n$ against $\mu_{m,A,N}$, is
\be
M_{n;m}(A,N)\ =\ \frac{1}{N}\sum_{i=1}^N \left(\frac{\lambda_i(A)}{\sqrt{N}}\right)^n\ =\ \frac{1}{N^{\nicefrac{n}{2}+1}} \sum_{i=1}^N \lambda_i^n(A).
\ee

We define \be
M_{n;m}(N)\ :=\ \E(M_{n;m}(A,N)),
\ee
and set
\be
M_{n;m}\ :=\ \lim_{N \to \infty} M_{n;m}(N)
\ee
(we'll show later that the limit exists). By $\E(M_{n;m}(A,N))$, we mean the expected value of $M_{n;m}(A,N)$ for a random symmetric $m$-block circulant matrix $A \in \Omega_{m,N}$.


\subsection{Moment Expansion}

We use a standard method to compute the moments.  By the eigenvalue trace lemma,
\begin{equation}
\Tr(A^n)\ =\ \sum_{i=1}^N \lambda_i^n,
\end{equation}
so
\be
M_{n;m}(A,N)\ =\ \frac{1}{N^{\nicefrac{n}{2}+1}} \Tr(A^n).
\ee
Expanding out $\Tr(A^n)$,
\be
M_{n;m}(A,N)\ =\ \frac{1}{N^{\nicefrac{n}{2}+1}} \sum_{1 \leq i_1,\dots,i_n \leq N} a_{i_1 i_2} a_{i_2 i_3} \cdots a_{i_n i_1},
\ee
so by linearity of expectation,
\be\label{moment}
M_{n;m}(N)\ =\ \frac{1}{N^{\nicefrac{n}{2}+1}} \sum_{1 \leq i_1,\dots,i_n \leq N} \E(a_{i_1 i_2} a_{i_2 i_3} \cdots a_{i_n i_1}).
\ee

Recall that we've defined the equivalence relation $\simeq$ on $\{1$, $2$, $\dots$, $N\}^2$ by $(i,j) \simeq (i^\p,j^\p)$ if and only if $a_{i j} = a_{i^\p j^\p}$ for all real symmetric $m$-block circulant matrices.  That is, $(i,j) \simeq (i^\p,j^\p)$ if and only if
\begin{itemize}
\item
$j-i \con j^\p-i^\p \Mod{N} \mbox{ and } i \con i^\p \Mod{m}, \mbox{ or}$
\item
$j-i \con -(j^\p-i^\p) \Mod{N} \mbox{ and } i \con j^\p \Mod{m}.$
\end{itemize}

For each term in the sum in (\ref{moment}), $\simeq$ induces an equivalence relation $\sim$ on $\{(1,2)$, $(2,3)$, $\dots$, $(n,1)\}$ by its action on $\{(i_1,i_2),(i_2,i_3),\dots,(i_n,i_1)\}$.   Let $\eta(\sim)$ denote the number of $n$-tuples with $0 \leq i_1, \dots, i_n \leq N$ whose indices inherit $\sim$ from $\simeq$.
Say $\sim$ splits up $\{(1,2)$, $(2,3)$, $\dots$, $(n,1)\}$ into equivalence classes with sizes $d_1(\sim),\dots,d_l(\sim)$.
Because the entries of our random matrices are independent identically distributed,
\be
\E(a_{i_1 i_2} a_{i_2 i_3} \cdots a_{i_n i_1})\ =\ m_{d_1(\sim)} \cdots m_{d_l(\sim)},
\ee
where the $m_d$ are the moments of $p$.
Thus, we may write
\be\label{squigglesum}
M_{n;m}(N)\ =\ \frac{1}{N^{\nicefrac{n}{2}+1}} \sum_{\sim} \eta(\sim) m_{d_1(\sim)} \cdots m_{d_l(\sim)}.
\ee

As $p$ has mean $0$, $m_{d_1(\sim)} \cdots m_{d_l(\sim)} = 0$ unless all of the $d_j$ are greater than $1$.  So all the terms in the above sum vanish except for those coming from a relation $\sim$ which matches at least in pairs.

The $\eta(\sim)$ denotes the number of solutions modulo $N$ the following system of Diophantine equations: Whenever $(s,s+1) \sim (t,t+1)$,
\begin{itemize}
\item
$i_{s+1}-i_s \con i_{t+1}-i_t \Mod{N} \mbox{ and } i_s \con i_t \Mod{m}, \mbox{ or}$
\item
$i_{s+1}-i_s \con -(i_{t+1}-i_t) \Mod{N} \mbox{ and } i_s \con i_{t+1} \Mod{m}.$
\end{itemize}

This system has at most $2^{n-l} N^{l+1}$ solutions, a bound we obtain by completely ignoring the $\Mod{m}$ constraints (see also \cite{MMS}).  Specifically, we pick one difference $i_{s+1}-i_s$ from each congruence class of $\sim$ freely, and we are left with at most $2$ choices for the remaining ones.  Finally, we pick $i_1$ freely, and this now determines all the $\ds i_s = i_1 + \sum_{s^\p < s} (i_{s^\p+1}-i_{s^\p})$. This method will not always produce a legitimate solution, even without the $\Mod{m}$ constraints, but it suffices to give an upper bound on the number of solutions.

When $n$ is odd, say $n = 2k+1$, then $l$ is at most $k$.  Thus $\frac{1}{N^{\nicefrac{n}{2}+1}}\eta(\sim) \leq \frac{1}{N^{k+\nicefrac{3}{2}}}2^{n-l} N^{l+1} \leq \frac{1}{N^{k+\nicefrac{3}{2}}}2^{n-l} N^{k+1} = \frac{1}{\sqrt{N}}2^{n-l} = O_n\left(\frac{1}{\sqrt{N}}\right)$.
This implies the odd moments vanish in the limit, as
\be
M_{2k+1;m}(N)\ =\ O_k\left(\frac{1}{\sqrt{N}}\right).
\ee

When $n$ is even, say $n=2k$, then $l$ is at most $k$.  If $l < k$, then $l \leq k-1$, and we have, similar to the above, $\frac{1}{N^{\nicefrac{n}{2}+1}}\eta(\sim)$ $\leq$ $\frac{1}{N^{k+1}}2^{n-l} N^{l+1}$ $\leq$ $\frac{1}{N^{k+1}}2^{n-l} N^{k}$ $=$ $\frac{1}{N}2^{n-l}$ $=$ $O_n\left(\frac{1}{N}\right)$.
If $l=k$, then the entries are exactly matched in pairs, that is, all the $d_j = 2$. As $p$ has variance $1$ (i.e., $m_2 = 1$), the formula for the even moments, (\ref{squigglesum}), becomes
\be
M_{2k;m}(N)\ =\ \frac{1}{N^{k+1}} \sum_{\s} \eta(\s) + O_k\left(\frac{1}{N}\right).
\ee
We've changed notation slightly.  The sum is now over pairings $\s$ on $\{(1,2)$, $(2,3)$, $\dots$, $(n,1)\}$, which we may consider as functions (specifically, involutions with no fixed points) as well as equivalence relations. We have thus shown

\begin{lem}\label{lem:oddmomentsandevenformula}
For the ensemble of symmetric $m$-block circulant matrices,
\bea M_{2k+1;m}(N) & \ =\ & O_k\left(\frac{1}{\sqrt{N}}\right) \nonumber\\ M_{2k;m}(N) & \ =\ & \frac{1}{N^{k+1}} \sum_{\s} \eta(\s) + O_k\left(\frac{1}{N}\right), \eea where the sum is over pairings $\s$ on $\{(1,2)$, $(2,3)$, $\dots$, $(n,1)\}$. In particular,
as $N\to\infty$ the average odd moment is zero.
\end{lem}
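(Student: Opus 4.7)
The plan is to execute the standard trace method for patterned random matrix ensembles. I would begin by applying the eigenvalue trace identity to write
\[
M_{n;m}(N) \;=\; \frac{1}{N^{n/2+1}} \sum_{1 \le i_1,\ldots,i_n \le N} \E\bigl(a_{i_1 i_2}\,a_{i_2 i_3}\cdots a_{i_n i_1}\bigr),
\]
and group the terms of this sum by the equivalence relation $\sim$ that $\simeq$ induces on the edge set $\{(i_1,i_2),\ldots,(i_n,i_1)\}$. Because the free entries are independent, the expectation factors as $m_{d_1(\sim)}\cdots m_{d_l(\sim)}$, where the $d_j(\sim)$ are the sizes of the $l$ classes. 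Since $p$ has mean zero, every $\sim$ with a singleton class contributes $0$, so only relations whose classes all have size $\ge 2$ survive.

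The heart of the argument is the combinatorial estimate $\eta(\sim) \le 2^{n-l} N^{l+1}$. I would read off the constraints defining $\simeq$ as a Diophantine system consisting of congruences modulo $N$ in the differences $i_{s+1}-i_s$ together with congruences modulo $m$ in the indices themselves; dropping the mod-$m$ conditions only relaxes the system and hence only inflates the count. In the relaxed system I can freely pick one representative difference per $\sim$-class ($N^l$ choices), pick a sign $\pm$ for each of the remaining $n-l$ edges as dictated by the two-clause definition of $\simeq$ ($2^{n-l}$ choices), and pick the anchor $i_1$ freely ($N$ choices). The telescoping identity $i_s = i_1 + \sum_{s'<s}(i_{s'+1}-i_{s'})$ then determines the whole tuple, yielding the desired bound.

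Finally, every class has size at least $2$, so $l \le \lfloor n/2 \rfloor$. For $n = 2k+1$ this forces $l \le k$, and each individual $\sim$ contributes at most $N^{-(k+3/2)} \cdot 2^{n-l} N^{l+1} = O_k(N^{-1/2})$; since the number of $\sim$'s depends only on $n$, the odd-moment bound follows. For $n = 2k$, equivalences with $l < k$ each contribute $O_k(1/N)$, while those with $l = k$ are precisely the perfect pairings $\sigma$. For such $\sigma$ every $d_j = 2$, and because $p$ has variance $1$ the factor $m_2^k = 1$ drops out, leaving exactly $N^{-(k+1)} \sum_\sigma \eta(\sigma)$ as the main term.

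The one spot that warrants care is the derivation of $\eta(\sim) \le 2^{n-l} N^{l+1}$: one must verify that the two-case mirror clause in the definition of $\simeq$ really does give at most two sign choices per redundant edge in the relaxed system, and that the anchor $i_1$ can be chosen independently of the difference data. This is bookkeeping rather than a genuine obstacle; it is the only point at which the specific form of the $m$-block circulant pattern enters the upper bound, and I would confirm it directly from the two bulleted conditions defining $\simeq$ before declaring the estimate complete.
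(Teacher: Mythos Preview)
Your proposal is correct and follows essentially the same argument as the paper: the trace expansion, grouping by the induced equivalence relation, the factorization of the expectation, the bound $\eta(\sim)\le 2^{n-l}N^{l+1}$ obtained by dropping the $\Mod{m}$ constraints and choosing one difference per class plus the anchor $i_1$, and the subsequent case split on $l$ for odd and even $n$. The only point you flag as needing care---the $2^{n-l}$ sign-choice count---is handled exactly as you describe in the paper as well.
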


\subsection{Even Moments}

We showed the odd moments go to zero like $\nicefrac{1}{\sqrt{N}}$ as $N \to \infty$; we now calculate the $2k$\textsuperscript{th} moments.
From Lemma \ref{lem:oddmomentsandevenformula}, the only terms which contribute in the limit are those in which the $a_{i_s i_{s+1}}$'s are matched in pairs.  We can think of the pairing as a pairing of the edges of a $2k$-gon with vertices $1,2,\dots,2k$ and edges $(1,2),(2,3),\dots,(2k,1)$.  The vertices are labeled $i_1,\dots,i_{2k}$ and the edges are labeled $a_{i_1 i_2},\dots,a_{i_{2k} i_1}$. See Figure \ref{fig:pairingnoorientation}.

\begin{figure}
\begin{center}
\includegraphics[scale=.34]{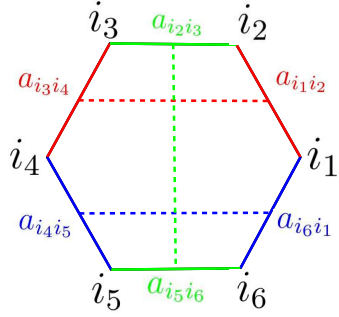}
\caption{\label{fig:pairingnoorientation} Diagram for a pairing arising in computing the $6$\textsuperscript{th} moment.}
\end{center}
\end{figure}

Note that this is dual to the diagrams for pairings that appear in \cite{HM, MMS}, in which the $a_{i_s i_{s+1}}$ are represented as vertices. For more on such an identification and its application in determining moments for random matrix ensembles, see \cite{Fo} (Section 1.6) and \cite{Zv}.


If $a_{i_s i_{s+1}}$ and $a_{i_t i_{t+1}}$ are paired, we have either
\begin{itemize}
\item
$i_{s+1}-i_s \con i_{t+1}-i_t \Mod{N} \mbox{ and } i_s \con i_t \Mod{m}, \mbox{ or}$
\item
$i_{s+1}-i_s \con -(i_{t+1}-i_t) \Mod{N} \mbox{ and } i_s \con i_{t+1} \Mod{m}$.
\end{itemize}

We think of these two cases as pairing $(s,s+1)$ and $(t,t+1)$ with the same or opposite orientation, respectively.  For example, in Figure \ref{fig:twohexagons} the hexagon on the left has all edges paired in opposite orientation, and the one on the right has all but the red edges paired in opposite orientation.

\begin{figure}
\begin{center}
\includegraphics[scale=.34]{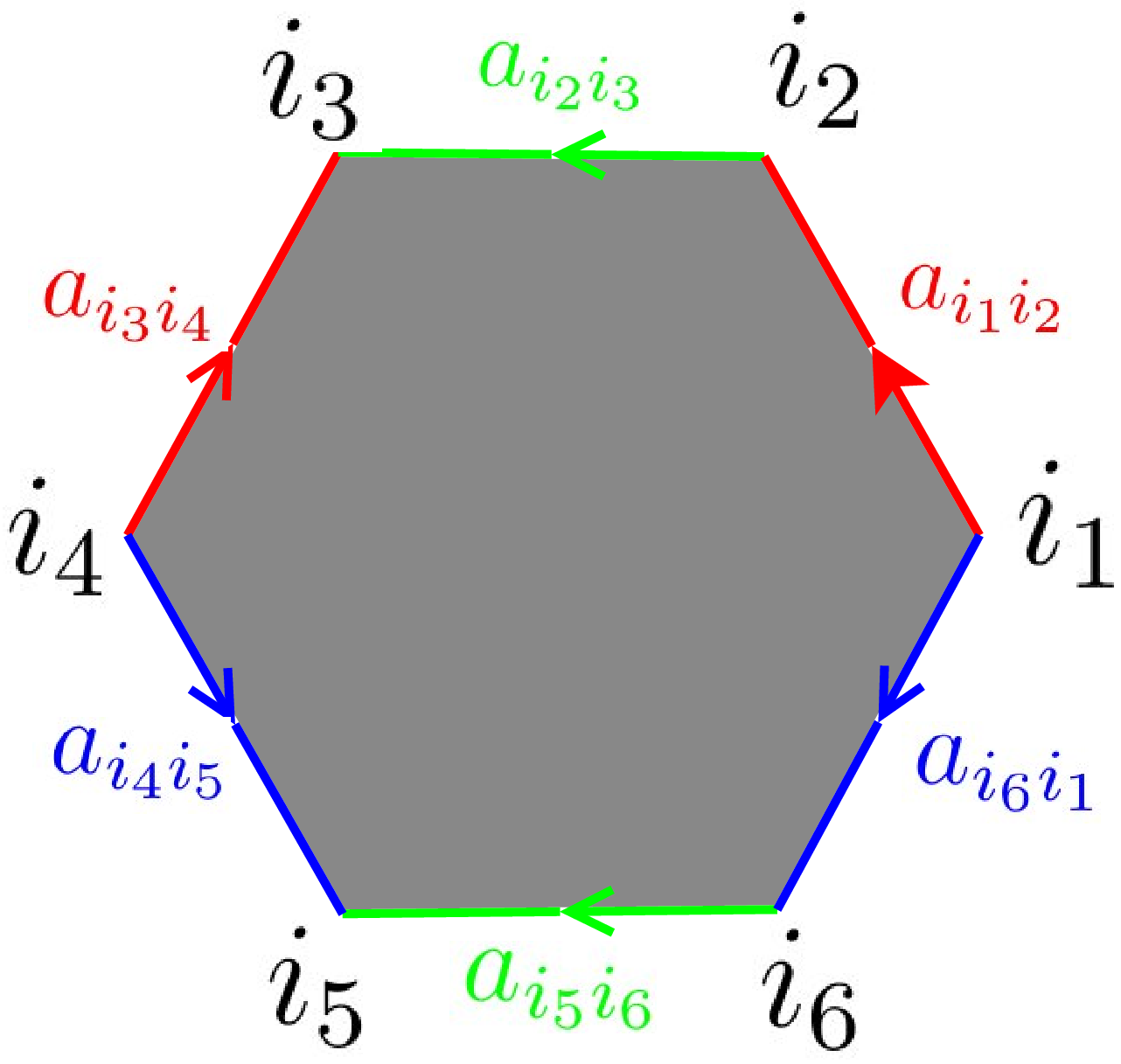}
\includegraphics[scale=.34]{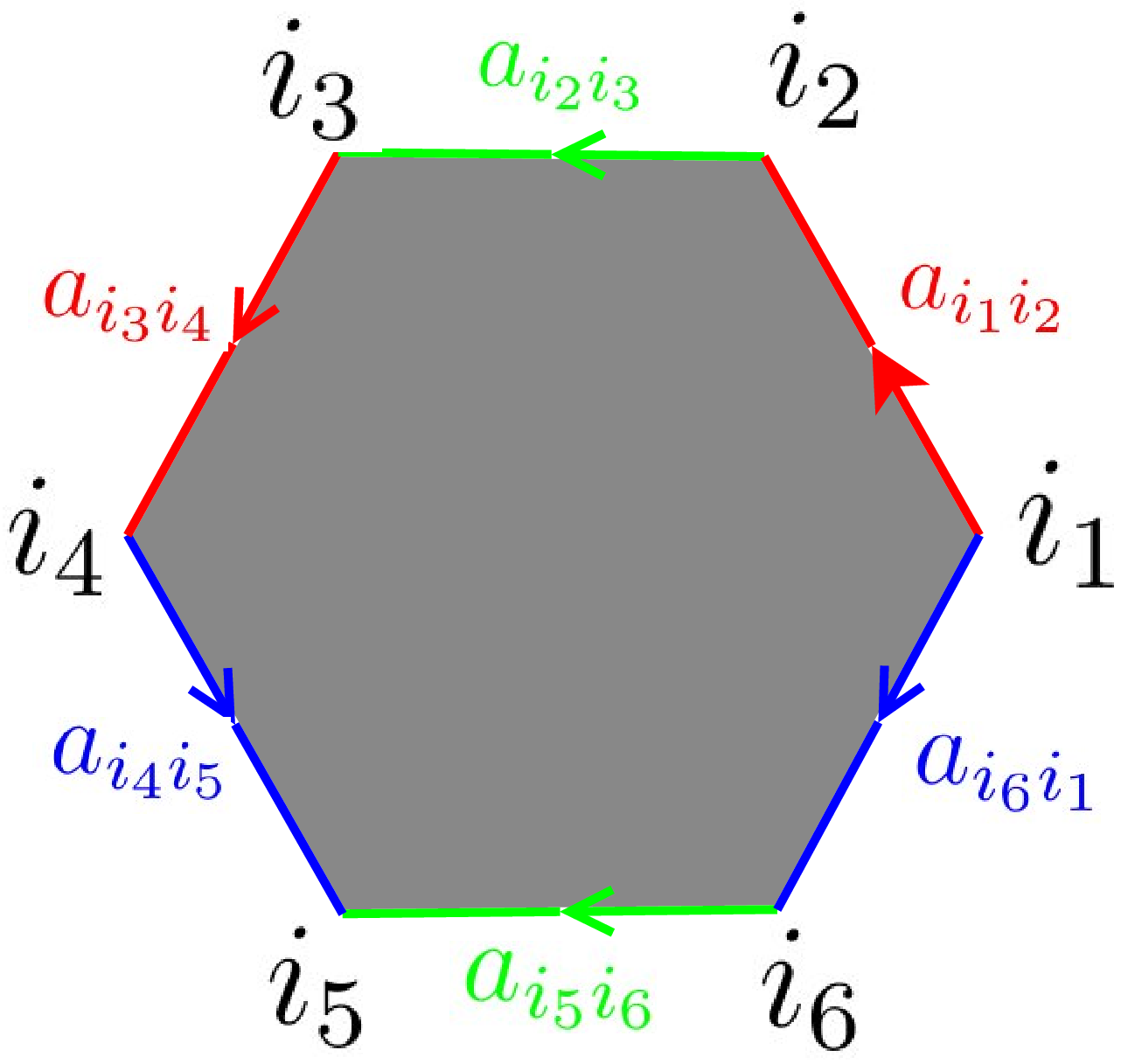}
\caption{\label{fig:twohexagons} Some possible orientations of paired edges for the $6$-gon.}
\end{center}
\end{figure}

We now dramatically reduce the number of pairings we must consider by showing that the only pairings which contribute in the limit are those in which all edges are paired with opposite orientation.  Topologically, these are exactly the pairings which give orientable surfaces \cite{Hat,HarZa}.  This result and its proof is a minor modification of their analogs in the Toeplitz and palindromic Toeplitz cases \cite{HM, MMS, JMP}.

\begin{lem}\label{lem: matchings}
Consider a pairing $\s$ with orientations $\varepsilon_j$.  If any $\varepsilon_j$ is equal to $1$, then the pairing contributes $O_k(\nicefrac{1}{N})$.
\end{lem}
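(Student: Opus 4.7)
The plan is to bound $\eta(\sigma)$ by translating the defining congruences into a linear system on the \emph{edge differences} $x_s := i_{s+1}-i_s \pmod N$ (indices cyclic modulo $2k$), and to show that every same-orientation pair introduces one extra nontrivial scalar relation, which saves a factor of $N$. Since the $\pmod m$ vertex constraints only further restrict valid labelings, it suffices to work with an upper bound that ignores them.

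First I would set up the parametrization: specifying $(i_1,\ldots,i_{2k}) \in (\Z/N\Z)^{2k}$ is equivalent to specifying $i_1 \in \Z/N\Z$ together with $(x_1,\ldots,x_{2k}) \in (\Z/N\Z)^{2k}$ subject to the cyclic closure $x_1 + x_2 + \cdots + x_{2k} \equiv 0 \pmod N$. Writing $\sigma$ as a list of pairs $\{(s_j,t_j)\}_{j=1}^{k}$ with orientations $\varepsilon_j \in \{\pm 1\}$, each pair imposes $x_{t_j} = \varepsilon_j x_{s_j}$, so choosing $y_j := x_{s_j}$ gives $k$ free parameters in $\Z/N\Z$. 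The two occurrences $x_{s_j}$ and $x_{t_j}$ contribute $y_j + \varepsilon_j y_j = (1+\varepsilon_j)\, y_j$ to the closure sum, so the closure condition reduces to the single linear equation
\[
\sum_{j=1}^{k} (1+\varepsilon_j)\, y_j \ \equiv \ 0 \pmod N.
\]

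If every $\varepsilon_j = -1$, every coefficient vanishes, the identity is automatic, and $(i_1, y_1,\ldots,y_k)$ ranges freely, contributing $N \cdot N^k = N^{k+1}$ labelings — the full leading-order count. If instead some $\varepsilon_j = +1$, then letting $S := \{j : \varepsilon_j = +1\} \neq \emptyset$, the condition becomes $2\sum_{j \in S} y_j \equiv 0 \pmod N$. For each of the at most $\gcd(2,N) \leq 2$ admissible values of $\sum_{j\in S} y_j \pmod N$ there are $N^{|S|-1}$ tuples $(y_j)_{j\in S}$ summing to that value, and the remaining $y_j$ for $j\notin S$ are free, so the total count of $y$'s is at most $2 N^{k-1}$. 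Multiplying by the $N$ choices of $i_1$ gives $\eta(\sigma) \leq 2 N^k$, hence
\[
\frac{\eta(\sigma)}{N^{k+1}} \ = \ O_k\!\left(\frac{1}{N}\right),
\]
as required.

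The main technical subtlety is the factor of $2$ appearing in $2z \equiv 0 \pmod N$: when $N$ is even this has two solutions rather than one, but the resulting constant inflation of the bound is harmlessly absorbed into the $O_k$ and does not spoil the $N^{-1}$ savings. Apart from this, the argument is the edge-dual of the vertex-indexed Diophantine counts in \cite{HM, MMS, JMP}, and nothing in it depends on the combinatorial structure of the pairing (crossings, nestings, etc.) — only on whether at least one $\varepsilon_j$ equals $+1$.
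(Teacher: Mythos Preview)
Your proof is correct and follows essentially the same route as the paper's: both ignore the $\Mod m$ constraints, parametrize by $i_1$ together with the edge differences, and use the cyclic closure $\sum_s (i_{s+1}-i_s) \equiv 0$ to extract the single relation $\sum_j (1+\varepsilon_j)\,y_j \equiv 0$, which is nontrivial as soon as some $\varepsilon_j=+1$ and costs a factor of $N$ (up to the harmless factor $\gcd(2,N)\le 2$). The only cosmetic difference is that the paper names the two edges in each pair $x_j,y_j$ directly rather than first indexing all $x_s$ and then setting $y_j:=x_{s_j}$.
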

\begin{proof}
The size of the contribution is equal to the number of solutions to the $k$ equations
\begin{equation}\label{Nequations}
i_{s+1}-i_s\ \con\ \varepsilon_j(i_{\s(s)+1}-i_{\s(s)}) \Mod{N},
\end{equation}
as well as some $\Mod{m}$ equations, divided by $N^{k+1}$.
We temporarily ignore the $\Mod{m}$ constraints and bound the contribution from above by the number of solutions to the $\Mod{N}$ equations over $N^{k+1}$.  Because the $i_s$ are restricted to the values $1,2,\dots,N$, we can consider them as elements of $\Z/N\Z$, and we now notate the $\Mod{N}$ congruences with equality.

The pairing puts the numbers $1,2,\dots,2k$ into $k$ equivalence classes of size two; arbitrarily order the equivalence classes and pick an element from each to call $s_j$, naming the other element $t_j = \s(s_j)$.

Our $\Z/N\Z$ equations now look like
\begin{equation}
i_{s_j+1}-i_{s_j}\ =\ \varepsilon_j(i_{t_j+1}-i_{t_j}) \bmod N.
\end{equation}

Defining
\bea
x_j &\ :=\ & i_{s_j+1}-i_{s_j}\nn\\
y_j &\ := \ & i_{t_j+1}-i_{t_j},
\eea
our equations now look like $x_j = \varepsilon_j y_j$. Thus
\bea
0 &\ = \ & \sum_{s=1}^{2k} i_{s+1} - i_s \ = \
 \sum_{j=1}^{k} x_j + \sum_{j=1}^{k} y_j\nn \ = \ \sum_{j=1}^{k} (\varepsilon_j + 1)y_j.
\eea
If any one of the $\varepsilon_j = 1$, this gives a nontrivial relation among the $y_j$, and we lose a degree of freedom.  We may choose $k-1$ of the $y_j$ freely (in $\Z/N\Z$), and we are left with $1$ or possibly $2$ choices for the remaining $y_j$ (depending on the parity of $N$).  The $x_j$ are now determined as well, so $i_{s+1}-i_s$ is now determined for every $s$.  If we choose $i_1$ freely, this now determines all the $\ds i_s = i_1 + \sum_{s^\p < s} (i_{s^\p+1}-i_{s^\p})$.  Thus, we have at most $N^{k-1} \cdot 2 \cdot N = 2 N^k$ solutions to (\ref{Nequations}).  So the contribution from a pairing with a positive sign is at most $O_k(2 \nicefrac{N^k}{N^{k+1}}) = O_k(\nicefrac{1}{N})$. (The reason for the big-Oh constant depending on $k$ is that if some of the different pairs have the same value, we might not have $k$ copies of the second moment but instead maybe four second moments and two eighth moments; however, the contribution is trivially bounded by ${\max}_{1 \le \ell \le k} (1+m_{2\ell})^k$, where $m_{2\ell}$ is the $2\ell$\textsuperscript{th} moment of $p$.)
\end{proof}

Thus we have
\be
M_{2k;m}(N)\ =\ \sum_{\sigma} w(\sigma) N^{-(k+1)} + O_k\left(\frac{1}{N}\right),
\ee
where $w(\s)$ denotes the number of solutions to
\begin{equation}
i_{j+1}-i_j\ \con\ -(i_{\s{j}+1}-i_{\s{j}})\bmod N
\end{equation}
and
\be\label{mmm}
i_j\ \con\ i_{\s(j)+1},\ \ \  i_{j+1}\ \con\ i_{\s(j)}\bmod m
\ee
(the second $\Mod{m}$ constraint is redundant). We discuss how to evaluate this moment in closed form, culminating in Lemma \ref{lem:formulaforevenmomentsepsilon}.


We now consider a given pairing as a topological identification (see \cite{Hat} for an exposition of the standard theory); this is the crux of our argument. Specifically, consider a $2k$-gon with the interior filled in (homeomorphic to the disk), and identify the paired edges with opposite orientation.  Under the identification, some vertices are identified; let $v$ denote the number of vertices in the quotient.

Consider the $(\Z/N\Z)$-submodule $\mathcal{A}$ of $(\Z/N\Z)^{2k}$ in which the $\Mod{N}$ constraints hold.  We have $\mathcal{A}$ is isomorphic to $(\Z/N\Z)^{k+1}$.  Specifically, we may freely choose the value of exactly half of the differences $i_{s+1}-i_s$, and then the rest are determined.  Because all the pairings are opposite orientation, these ``differences'' sum to zero, so they are actually realizable as differences.  Now choose $i_1$ freely, and the rest of the $\ds i_s = i_1 + \sum_{s' < s} (i_{s'+1}-i_{s'})$ are determined.

Let $\bar{\mathcal{A}}$ denote the quotient of $\mathcal{A}$ in which everything is reduced modulo $m$, and consider the $(\Z/m\Z)$-submodule $B \subseteq \bar{\mathcal{A}}$ in which the modulo $m$ constraints hold.  By (\ref{mmm}), we can see that the labels at two vertices of our $2k$-gon are forced to be congruent $\Mod{m}$ if and only if the vertices are identified in the quotient, and these are all the $\Mod{m}$ constraints.  In other words, $\mathcal{B}$ is isomorphic to $(\Z/m\Z)^v$.  An element of $\mathcal{A}$ for which the $\Mod{m}$ constraints also hold is exactly one in the preimage of $\mathcal{B}$.  We have $m^v$ choices for an element in $\mathcal{B}$, and there are $(\nicefrac{N}{m})^{k+1}$ ways to lift such an element to an element of $A$ in its fiber.
Thus, the equations have a total of $m^v (\nicefrac{N}{m})^{k+1} = m^{-(k+1-v)} N^{k+1}$, so the pairing has a contribution of $m^{-(k+1-v)}$.


Let $X$ be the 2-dimensional cell complex described by the pairing $\s$ of the edges of the $2k$-gon.  Because all edges were paired in the reverse direction, $X$ is an orientable surface.  After identifications, the complex we've described has 1 face, $k$ edges, and, say, $v$ vertices.  If we denote by $g$ the genus of the surface, we obtain two expressions for the Euler characteristic of $X$.  By the standard (homological) definition of Euler characteristic, we have $\chi(X) = 1-k+v$.  On the other hand, for a genus $g$ surface $X$, $\chi(X) = 2-2g$ \cite{Hat}.  Equating and rearranging,
\begin{eqnarray}
2g\ =\ k+1-v.
\end{eqnarray}
Thus the pairing $\s$ contributes $m^{-2g}$, and we have shown

\begin{lem}\label{lem:formulaforevenmomentsepsilon} For the ensemble of symmetric $m$-block circulant matrices,
\be
M_{2k;m}(N) \ =\ \sum_g \varepsilon_g(k) m^{-2g} + O_k\left(\frac{1}{N}\right),
\ee where $\varepsilon_g(k)$ denote the number of pairings of the edges of a $2k$-gon which give rise to a genus $g$ surface.
\end{lem}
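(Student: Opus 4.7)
The plan is to assemble the lemma from the machinery already developed in the preceding subsection, combining the reduction to opposite-orientation pairings with a topological bookkeeping argument for the $\Mod{m}$ constraints.

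First, I would invoke Lemma \ref{lem:oddmomentsandevenformula} to write $M_{2k;m}(N) = N^{-(k+1)} \sum_\sigma \eta(\sigma) + O_k(1/N)$, where $\sigma$ ranges over pairings of $\{(1,2),(2,3),\dots,(2k,1)\}$. Then Lemma \ref{lem: matchings} kills every pairing that contains even one edge matched with the same orientation, each such pairing contributing only $O_k(1/N)$. Since there are only finitely many pairings (bounded by $(2k-1)!!$, independent of $N$), I can absorb all of these into the error term and restrict the sum to pairings $\sigma$ in which every pair has opposite orientation. It remains to compute $\eta(\sigma)/N^{k+1}$ exactly for such $\sigma$ and show the result is $m^{-2g(\sigma)}$, where $g(\sigma)$ is the genus associated to $\sigma$.

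Next, for a fixed opposite-orientation pairing $\sigma$, I would count solutions in two stages. Ignoring the $\Mod{m}$ constraints first, the space $\mathcal{A} \subset (\Z/N\Z)^{2k}$ of tuples $(i_1,\dots,i_{2k})$ satisfying the $\Mod{N}$ constraints $i_{s+1} - i_s \equiv -(i_{\sigma(s)+1} - i_{\sigma(s)}) \pmod{N}$ is a free module of rank $k+1$: I pick one representative difference $i_{s+1}-i_s$ from each of the $k$ equivalence classes freely, the opposite-orientation condition ensures the cycle condition $\sum_s (i_{s+1}-i_s)=0$ is automatic, and then $i_1$ is free. So $|\mathcal{A}| = N^{k+1}$. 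Next I reduce modulo $m$, obtaining $\bar{\mathcal{A}} \subset (\Z/m\Z)^{2k}$. The crucial observation is that the remaining $\Mod{m}$ constraints in \eqref{mmm} say precisely that labels at two vertices of the $2k$-gon are congruent mod $m$ iff those vertices are identified under the edge-pairing. Thus the admissible tuples mod $m$ form a submodule $\mathcal{B} \subset \bar{\mathcal{A}}$ isomorphic to $(\Z/m\Z)^v$, where $v$ is the number of vertices in the quotient complex. Since each element of $\mathcal{B}$ lifts to $(N/m)^{k+1}$ elements of $\mathcal{A}$, I get $\eta(\sigma) = m^v (N/m)^{k+1} = m^{v-(k+1)} N^{k+1}$.

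Finally, I would invoke the classification of closed orientable surfaces. Because every pair is glued with opposite orientation, the quotient $2k$-gon is a closed orientable surface $X$ with $1$ face, $k$ edges, and $v$ vertices, hence Euler characteristic $\chi(X) = 1 - k + v$. Comparing with $\chi(X) = 2 - 2g$ for a genus $g$ surface gives $k+1-v = 2g$, so each pairing contributes $N^{-(k+1)} \eta(\sigma) = m^{-2g(\sigma)}$. Grouping the opposite-orientation pairings by the genus of the resulting surface and writing $\varepsilon_g(k)$ for the number in genus $g$ yields the claimed identity. The only real conceptual obstacle is the clean identification of the $\Mod{m}$ constraints with the vertex-identification pattern of the quotient complex; every other step is either a direct appeal to the preceding lemmas or a standard Euler-characteristic computation.
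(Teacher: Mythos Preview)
Your proposal is correct and follows essentially the same argument as the paper: reduce to opposite-orientation pairings via Lemmas \ref{lem:oddmomentsandevenformula} and \ref{lem: matchings}, identify the $\Mod{N}$ solution set with a rank $k+1$ module, observe that the $\Mod{m}$ constraints correspond exactly to the vertex identifications of the glued $2k$-gon so that the admissible residue classes form $(\Z/m\Z)^v$, and finish with the Euler-characteristic identity $2g=k+1-v$. The paper uses the same notation $\mathcal{A}$, $\bar{\mathcal{A}}$, $\mathcal{B}$ and the same lifting count $m^v(N/m)^{k+1}$, so your outline is a faithful reconstruction.
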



\section{Determining the Limiting Spectral Measures}\label{sec:determiningevenmoments}
%

We prove parts (1) and (2) of Theorem \ref{thm:mainexpformulasconvergence}. Specifically, we derive the density formula for the limiting spectral density of symmetric $m$-block circulant matrices. We show that, if $m$ grows at any rate with $N$, then the limiting spectral density is the semi-circle for both the symmetric $m$-block circulant and Toeplitz ensembles.

\subsection{The Limiting Spectral Measure of the Symmetric $m$-Block Circulant Ensemble}

\begin{proof}[Proof of Theorem \ref{thm:mainexpformulasconvergence}(1)]

By deriving an explicit formula, we show that the limiting spectral density function $f_m$ of the real symmetric $m$-block circulant ensemble is equal to the spectral density function of the $m \times m$ GUE.


From Lemma \ref{lem:formulaforevenmomentsepsilon}, the $N\to\infty$ limit of the average $2k$\textsuperscript{th} moment equals \be
M_{2k;m}\ = \ \sum_{g=0}^{\lfloor \nicefrac{k}{2} \rfloor}\varepsilon_g(k) m^{-2g}, \ee with $\varepsilon_g(k)$ the number of pairings of the edges of a $2k$-gon giving rise to a genus $g$ surface. Harer and Zagier \cite{HarZa} give formulas for the $\varepsilon_g(k)$.  They prove
\be\label{eq:tanhvarepsilon} \varepsilon_g(k)\ =\ \frac{(2k)!}{(k+1)!(k-2g)!} \times \left(\mbox{coefficient of } x^{2g} \mbox{ in } \left(\frac{\nicefrac{x}{2}}{\tanh(\nicefrac{x}{2})} \right)^{k+1}\right) \ee
and
\be \sum_{g=0}^{\lfloor \nicefrac{k}{2} \rfloor} \varepsilon_g (k) r^{k+1-2g}\ =\ (2k-1)!! \ c(k,r), \ee
where
\be 1+2 \sum_{k=0}^{\infty} c(k,r) x^{k+1}\ =\ \left(\frac{1+x}{1-x}\right)^r. \ee
Thus, we may write
\be
M_{2k;m}\ =\ m^{-(k+1)}(2k-1)!!\,c(k,m).
\ee

We construct the characteristic function\footnote{\label{foot:charfnfootnote} The characteristic function is $\phi_m(t) = \E[e^{itX_m}] = \int_{-\infty}^\infty f_m(x) e^{itx}dx$. This is the inverse Fourier transform of $f_m$.} of the limiting spectral distribution. Let $X_m$ be a random variable with density $f_m$. Then (remembering the odd moments vanish)
\bea
\phi_m(t) &\ =\ & \E[e^{itX_m}] \ = \ \sum_{\ell=0}^\infty \frac{(it)^\ell M_{\ell;m}}{\ell!} \nonumber\\ &\ = \ &  \sum_{k=0}^{\infty} \frac{(i t)^{2k} M_{2k;m}}{(2k)!}\nn \\
       &\ =\ & \sum_{k=0}^{\infty} \frac{1}{(2k)!}m^{-(k+1)}(2k-1)!!\,c(k,m) (-t^2)^k.
\eea

In order to obtain a closed form expression, we rewrite the characteristic function as
\be
\phi_m(t)\ =\ \frac{1}{m} \sum_{k=0}^{\infty} c(k,m) \frac{1}{k!} \left(\frac{-t^2}{2m}\right)^k,
\ee
using $(2k-1)!!=\frac{(2k)!}{2^k k!}$. The reason for this is that we can interpret the above as a certain coefficient in the convolution of two known generating functions, which can be isolated by a contour integral. Specifically, consider the two functions
\be
F(y)\ := \ \frac{1}{2y} \left(\left(\frac{1+y}{1-y}\right)^m -1 \right)\ =\ \sum_{k=0}^{\infty} c(k,m) y^{k} \ \ \ {\rm and} \ \ \
G(y)\ :=\ e^{y}\ =\ \sum_{k=0}^{\infty} \frac{ y^{k}}{k!}. \ee
Note that $\phi_m(t)$ is the function whose power series is the sum of the products of the $k$\textsuperscript{th} coefficients of $G(-\nicefrac{y^2}{2m})$ (which is related to the exponential distribution) and $F(y)$ (which is related to the generating function of the $\varepsilon_g(k)$). Thus, we may use a multiplicative convolution to find a formula for the sum.  By Cauchy's residue theorem, integrating $F(z^{-1}) G(-\nicefrac{t^2 z}{2m}) z^{-1}$ over the circle of radius $2$ yields
\be \phi_m(t)\ =\ \frac{1}{2\pi i m} \oint_{\left|z\right|=2} F(z^{-1})G\left(-\frac{t^2 z}{2m} \right)\frac{dz}{z}, \ee
since the constant term in the expansion of $F(z^{-1}) G(-\nicefrac{t^2 z}{2m})$ is exactly the sum of the products of coefficients for which the powers of $y$ in $F(y)$ and $G(y)$ are the same.\footnote{All functions are meromorphic in the region with finitely many poles; thus the contour integral yields the sum of the residues. See for example \cite{SS2}.} We are integrating along the circle of radius $2$ instead of the unit circle to have the pole inside the circle and not on it. Thus
\bea \phi_m(t) & \ = \ & \frac{1}{2\pi i m} \oint_{\left|z\right|=2} \frac{1}{2z^{-1}} \left(\left(\frac{1+z^{-1}}{1-z^{-1}}\right)^m -1 \right)e^{-\nicefrac{t^2 z}{2m}} \frac{dz}{z} \nonumber
\\ & = & \frac{1}{4\pi i m} \oint_{\left|z\right|=2} \left(\left(\frac{z+1}{z-1}\right)^m -1 \right)e^{-\nicefrac{t^2z}{2m}} dz \nonumber
\\ & = & \frac{e^{-\nicefrac{t^2}{2m}}}{4\pi i m} \oint_{\left|z\right|=2} \left(\left(1+\frac{2}{z-1}\right)^m -1 \right)e^{-\nicefrac{t^2(z-1)}{2m}} dz \nonumber
\\ & = & \frac{e^{-\nicefrac{t^2}{2m}}}{4\pi i m} \oint_{\left|z\right|=2}  \sum_{l=0}^{m}\ncr{m}{l}\left(\frac{2}{z-1}\right)^l \sum_{s=0}^{\infty}\frac{1}{s!}\left(\frac{-t^2}{2m}\right)^s(z-1)^s dz  \nonumber \\ & & \ \ \ \ \ - \frac{e^{-\nicefrac{t^2}{2m}}}{4\pi i m} \oint_{\left|z\right|=2} e^{-\nicefrac{t^2(z-1)}{2m}} dz.
\eea

By Cauchy's Residue Theorem the second integral vanishes and the only surviving terms in the first integral are when $l-s=1$, whose coefficient is the residue. Thus
\bea\label{eq:goodexpansioncharfnm}
\phi_m(t) &\ = \ & \frac{e^{-\nicefrac{t^2}{2m}}}{2 m} \sum_{l=1}^{m} \ncr{m}{l} 2^l \frac{1}{(l-1)!}\left(\frac{-t^2}{2m}\right)^{l-1} \nn \\
        &=& \frac{1}{m}\ e^{-\nicefrac{t^2}{2m}} \sum_{l=1}^m {m \choose l} \frac{1}{(l-1)!}\left(\frac{-t^2}{m}\right)^{l-1}\ = \ \frac{1}{m}\ e^{-\nicefrac{t^2}{2m}} L_{m-1}^{(1)}\left(\nicefrac{t^2}{m}\right),
\eea
which equals the spectral density function of the $m\times m$ GUE (see \cite{Led}).


As the density and the characteristic function are a Fourier transform pair, each can be recovered from the other through either the Fourier or the inverse Fourier transform (see for example \cite{SS1,SS2}). Since the characteristic function is given by
\be\label{eq:fourdefphifm} \phi_m(t) \ = \ \E[e^{itX_m}] \ =\ \int_{-\infty}^{\infty} e^{itx} f_m(x) \,dx \ee (where $X_m$ is a random variable with density $f_m$), the density is regained by the relation
\be f_m(x)\ =\ \widehat{\phi_m}(x)\ =\ \frac{1}{2\pi} \int_{-\infty}^{\infty} e^{-itx}\phi_m(t)\,dt. \ee
Taking the Fourier transform of the characteristic function $\phi_m(t)$, and interchanging the sum and the integral, we get
\bea \label{eq:den1}
f_m(x) &\ = \ &  \frac{1}{2\pi} \int_{-\infty}^{\infty}\frac{e^{-\nicefrac{t^2}{2m}}}{m} \sum_{l=1}^{m} \ncr{m}{l}  \frac{1}{(l-1)!}\left(\frac{-t^2}{m}\right)^{l-1} e^{-itx}dt \nonumber
\\ &\ =\ & -\frac{1}{2\pi} \sum_{l=1}^{m} \ncr{m}{l}\frac{1}{(l-1)!} \left(-m\right)^{-l} \int_{-\infty}^{\infty} t^{2(l-1)} e^{-\nicefrac{t^2}{2m}} e^{-itx} dt \nonumber\\ &\ =\ & -\frac{1}{2\pi} \sum_{l=1}^{m} \ncr{m}{l}\frac{1}{(l-1)!} \left(-m\right)^{-l} I_m.
\eea
Completing the square in the integrand of $I_m$, we obtain
\bea I_m   &\ =\ & e^{-\nicefrac{mx^2}{2}} \int_{-\infty}^{\infty} t^{2(l-1)} \exp\left(-\foh \left(\frac{t}{\sqrt{m}} +i\sqrt{m}x\right)^2\right) dt.
\eea
Changing variables by $y=\frac{1}{\sqrt{m}}t +i\sqrt{m}x $, $dy= \frac{1}{\sqrt{m}} dt$, we find $I_m$ equals
\bea I_m &\ =\ & e^{-\nicefrac{mx^2}{2}} \int_{-\infty}^{\infty} \left( y-i\sqrt{m}x \right)^{2(l-1)} \left(\sqrt{m}\right)^{2(l-1)}  e^{-\nicefrac{y^2}{2}} \sqrt{m} dy \nonumber\\ & = & e^{-\nicefrac{mx^2}{2}} m^{l-\foh} \sum_{s=0}^{2(l-1)} \ncr{2(l-1)}{s} \left(-i\sqrt{m}x \right)^{2(l-1)-s} \int_{-\infty}^{\infty}  y^s e^{-\nicefrac{y^2}{2}} dy.
\eea
The integral above is the $s$\textsuperscript{th} moment of the Gaussian, and is $\sqrt{2\pi} (s-1)!!$ for even $s$ and $0$ for odd $s$. Since the odd $s$ terms vanish, we replace the variable $s$ with $2s$ and sum over $0\leq s \leq (l-1)$. We find
\bea I_m \ = \ \sqrt{2\pi} e^{-\nicefrac{mx^2}{2}} m^{l-\foh} \sum_{s=0}^{l-1} \ncr{2(l-1)}{2s} \left(-m x^2 \right)^{l-1-s} (2s-1)!!. \nonumber \\ \eea
Substituting this expression for $I_m$ into \eqref{eq:den1} and making the change of variables $r = l-1-s$,  we find that the density is
\be
f_m(x)\ =\ \frac{ e^{-\nicefrac{mx^2}{2}}}{\sqrt{2\pi m}} \sum_{r=0}^{m-1} \frac{1}{(2r)!}
\left( \sum_{s=0}^{m-r} {m \choose r+s+1} \frac{(2r+2s)!}{(r+s)! s!} \left(-\foh \right)^s \right)
(m x^2)^r.
\ee This completes the proof of Theorem \ref{thm:mainexpformulasconvergence}(1).
\end{proof}


\subsection{The $m\to\infty$ Limit and the Semi-Circle}

Before proving Theorem \ref{thm:mainexpformulasconvergence}(2), we first derive expressions for the limits of the average moments of the symmetric $m$-block Toeplitz ensemble. We sketch the argument. Though the analysis is similar to its circulant cousin, it presents more difficult combinatorics.  Because diagonals do not ``wrap around'', certain diagonals are better to be on than others.  Consequently, the Diophantine obstructions of \cite{HM} are present. The problems are the matchings with ``crossings'', or, in topological language, those matchings which give rise to tori with genus $g \geq 1$ as opposed to spheres with $g=0$. For a detailed analysis of the Diophantine obstructions and how the added circulant structure fixes them, see \cite{HM} and \cite{MMS}. Fortunately, it is easy to show that the contributions to the $2k$\textsuperscript{th} moment of the symmetric $m$-block Toeplitz distribution from the non-crossing (i.e, the spherical matchings or, in the language of \cite{BanBo}, the Catalan words) are unhindered by Diophantine obstructions and thus contribute fully. The number of these matchings is $C_k$, which is the $k$\textsuperscript{{\rm th}} Catalan number $\frac1{k+1}\ncr{2k}{k}$ as well as the $2k$\textsuperscript{th} moment of the Wigner density \be \twocase{f_{\rm Wig}(x) \ = \ }{\frac1{2\pi}\sqrt{1-\left(\frac{x}{2}\right)^2}}{if $|x| \le 2$}{0}{otherwise.} \ee Note that with this normalization have a semi-ellipse and not a semi-circle; to obtain the semi-circle, we normalize the eigenvalues by $2\sqrt{N}$ and not $\sqrt{N}$. As the other matchings contribute zero in the limit, we obtain convergence to the Wigner semi-circle as $m \to \infty$. We now prove the above assertions.

\begin{lem}\label{lem:limitsmomentssymmperiodmToep}
The limit of the average of the $2k$\textsuperscript{th} moment of the symmetric $m$-block Toeplitz ensemble equals
\be M_{2k;m}\ =\ C_k + \sum_{g=1}^{\lfloor \nicefrac{k}{2} \rfloor} d(k,g) m^{-2g}, \ee
where $C_k$ is the $k$\textsuperscript{{\rm th}} Catalan number and $d(k,g) \in [0,1]$ are constants corresponding to the total  contributions from the genus $g$ pairings for the $2k$\textsuperscript{{\rm th}} moment.
\end{lem}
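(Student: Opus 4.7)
The plan is to adapt the moment analysis of Section~\ref{sec:trace} to the Toeplitz setting, where the defining equivalence $(i,j) \simeq (i',j')$ requires $j-i = j'-i'$ \emph{as integers} (in place of $\Mod N$) together with $i \equiv i' \Mod m$, and analogously for the reflected case. Running the trace expansion and pair-matching reduction verbatim yields
\be
M_{2k;m}(N) \ =\ \frac{1}{N^{k+1}}\sum_{\sigma} \eta_T(\sigma) \ +\ O_k\!\left(\frac{1}{N}\right),
\ee
where the sum runs over pair matchings $\sigma$ on $\{(1,2),\dots,(2k,1)\}$ and $\eta_T(\sigma)$ counts the tuples $(i_1,\dots,i_{2k}) \in \{1,\dots,N\}^{2k}$ satisfying the Toeplitz congruences. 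The linear-algebra argument of Lemma~\ref{lem: matchings} then restricts attention to opposite-orientation pairings, since the relation $\sum_j (\varepsilon_j + 1) y_j = 0$ costs a degree of freedom regardless of whether the $y_j$ live in $\Z$ or in $\Z/N\Z$; in fact the argument is cleaner here, as there is no $\{1,2\}$-solution ambiguity coming from wrap-around.

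For each opposite-orientation pairing $\sigma$, the count $\eta_T(\sigma)/N^{k+1}$ factors asymptotically as a Diophantine volume $\delta(\sigma) \in [0,1]$ times a block factor $m^{-2g}$. The block factor is unchanged from the circulant case: the topological computation of Lemma~\ref{lem:formulaforevenmomentsepsilon} is purely local to the cell complex associated with $\sigma$ and gives $m^{-(k+1-v)} = m^{-2g}$, where $v$ is the number of vertex classes after identification and $g$ the genus. The Diophantine factor
\be
\delta(\sigma) \ :=\ \lim_{N \to \infty} \frac{1}{N^{k+1}}\#\bigl\{(i_1,\dots,i_{2k}) \in \{1,\dots,N\}^{2k} : i_{s+1}-i_s = -(i_{t+1}-i_t) \text{ for each paired } (s,t)\bigr\}
\ee
is the volume of the polytope cut out in the unit cube by the integer relations, which is exactly the Diophantine factor appearing in the plain ($m=1$) symmetric Toeplitz analyses of \cite{HM,MMS,BDJ,BCG}.

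The punchline is the planar-versus-crossing dichotomy for pair matchings. A non-crossing pairing forces $i_{s+1} = i_t$ and $i_s = i_{t+1}$ rigidly, so its $k+1$ vertex classes range freely in $\{1,\dots,N\}$ and $\delta(\sigma) = 1$; a crossing pairing imposes at least one nontrivial additional linear relation and has $\delta(\sigma) < 1$. Since non-crossing pairings are exactly the genus-$0$ pairings \cite{Hat,HarZa} and number $\varepsilon_0(k) = C_k$, their combined contribution is exactly $C_k$; bundling the remaining pairings by genus gives
\be
d(k,g) \ :=\ \sum_{\sigma \text{ of genus } g} \delta(\sigma) \ \in\ [0,1]
\ee
as the total correction at genus $g \geq 1$, with the upper bound coming from the quantitative Diophantine estimates of \cite{HM,MMS,BDJ} for the genus-$g$ matchings (the same estimates that bound $d(2,1) = 2/3$ and underlie the known excess in the fourth moment of the plain symmetric Toeplitz ensemble).

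The main obstacle is rigorously decoupling the Diophantine and $\Mod m$ counts as $N\to\infty$: one must show that conditioning on the $\Mod m$ class does not distort the Diophantine volume. This works because the mod-$m$ congruences depend only on the combinatorics of $\sigma$ (and not on $N$), so the fiber over the mod-$m$ quotient is a translate of the same sub-polytope of the Diophantine solution set and contributes its asymptotic density independently. Combining this decoupling with the standard lattice-point analysis for Toeplitz matchings produces the claimed expansion.
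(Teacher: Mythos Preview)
Your argument tracks the paper's proof closely: both adapt the trace expansion and Lemma~\ref{lem: matchings} to the Toeplitz setting (equalities over $\Z$ rather than congruences mod $N$), reduce to opposite-orientation pairings, and then factor each pairing's contribution as a Diophantine volume $\delta(\sigma)\in[0,1]$ times the topological weight $m^{-2g}$. Your explicit decoupling paragraph---arguing that each admissible mod-$m$ residue class picks out a translate of the same sublattice inside the Diophantine polytope, so the volume factor $\delta(\sigma)$ is independent of the residue class---is actually a point the paper only gestures at (``arguing as in the proof of Lemma~\ref{lem:formulaforevenmomentsepsilon}''), so you have added detail there.

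Conversely, the paper supplies what you omit for the genus-$0$ term: it proves by an inner-pair induction that every non-crossing matching contains an adjacent pair, which forces $i_s=i_{s+2}$ with $i_{s+1}$ free and hence ``ties'' the ends so that all $k+1$ vertex labels range freely. Your sentence ``a non-crossing pairing forces $i_{s+1}=i_t$ and $i_s=i_{t+1}$ rigidly'' is the conclusion of that induction, not an immediate consequence of a single pairing equation $i_{s+1}-i_s=-(i_{t+1}-i_t)$; you should either supply the induction or cite it.

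One caution on the bound $d(k,g)\in[0,1]$: what \cite{HM,BDJ} establish is $\delta(\sigma)\le 1$ \emph{per matching}, so your definition $d(k,g)=\sum_{\sigma\text{ of genus }g}\delta(\sigma)$ immediately gives $d(k,g)\in[0,\varepsilon_g(k)]$, not $[0,1]$. (Indeed already $d(3,1)>1$ since the Toeplitz sixth moment exceeds $C_3+1=6$.) The paper's own proof only argues the per-matching bound as well, so the ``$\in[0,1]$'' in the lemma statement should be read as a misprint for the per-matching statement; your appeal to ``quantitative Diophantine estimates'' does not repair this, and you should not claim it does. For the intended application (Theorem~\ref{thm:mainexpformulasconvergence}(2)) only the fact that each $d(k,g)$ is a finite constant independent of $m$ is needed.
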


\begin{proof}
For the symmetric $m$-block Toeplitz ensemble, the analysis in \S\ref{sec:trace} applies almost exactly. In the condition for $a_{ij} = a_{i'j'}$, equality replaces congruence modulo $N$.
\begin{itemize}
\item
$j-i = j^\p-i^\p \mbox{ and } i \con i^\p \Mod{m}, \mbox{ or}$
\item
$j-i = -(j^\p-i^\p) \mbox{ and } i \con j^\p \Mod{m}.$
\end{itemize}

These constraints are more restrictive, so we again obtain $2^{n-l} N^{l+1}$ as an upper bound on the number of solutions. Following the previous argument, the odd moments are $M_{2k+1;m}(N)=O_k(\nicefrac{1}{\sqrt{N}})$, and the even moments are
\be M_{2k;m}(N)\ =\ \frac{1}{N^{k+1}} \sum_{\s} \eta(\s) + O_k\left(\frac{1}{N}\right), \ee
where $ \eta(\s) $ is the number of solutions to the Diophantine equations arising from the pairings $\s$ on $\{(1,2),(2,3),\dots,(2k,1)\}$ of the indices. Thus the odd moments vanish in the limit. Moreover, the only matchings that contribute are the ones with negative signs. To see this fact, one can follow the proof of Lemma \ref{lem: matchings}, except working in $\Z$ instead of $\Z/N\Z$.

While it is known that most matchings for the real symmetric Toeplitz ensemble do not contribute fully, a general expression for the size of the contributions is unknown, though there are expressions for these in terms of volumes of Eulerian solids (see \cite{BDJ}) or obstructions to Diophantine equations (see \cite{HM}). These expressions imply that each matching contributes at most 1. We introduce constants to denote their contribution (this corresponds to the $m=1$ case). This allows us to handle the real symmetric $m$-block Toeplitz ensemble, and (arguing as in the proof of Lemma \ref{lem:formulaforevenmomentsepsilon}), write the limit of the average of the $2k$\textsuperscript{{\rm th}} moments as 
\be M_{2k;m}\ =\ \sum_{g=0}^{\lfloor \nicefrac{k}{2} \rfloor} d(k,g) m^{-2g}. \ee
Here $d(k,g)$ is the constant corresponding to the contributions of the genus $g$ matchings. All that is left is to show that $d(k,0)$, the contributions from the non-crossing or spherical matchings, is the Catalan number $C_k$.

We know that the number of non-crossing matchings of $2k$ objects into $k$ pairs is the Catalan number $C_k$. This is well-known in the literature. Alternatively, we know the number of non-crossing matchings are $\varepsilon_0(k)$, as these are the ones that give the genus 0 sphere. The claim follows immediately from \eqref{eq:tanhvarepsilon} by taking the constant term (as $g=0$) and noting $\tanh(\frac{x}{2}) = \frac{x}{2} - \frac{x^3}{24} + \cdots$. We are thus reduced to proving that, even with the mod $m$ periodicity, each of these pairings still contributes 1.

One way of doing this is by induction on matchings. Consider a non-crossing configuration of contributing matchings for the $2k$\textsuperscript{th} moment. Consider an arbitrary matching in the configuration, and denote the matching by $\ga_1$.  The matching corresponds to an equation $i_s-i_{s+1}=i_{t+1}-i_t$. If the matching is adjacent, meaning $s=t+1$, then $i_{t+1}$ is free and $i_t=i_{t+2}$, and there is no ``penalty'' (i.e., a decrease in the contribution) from the $\Mod{m}$ condition. We call this having the ends of a matching ``tied'' (note that adjacent matchings always tie their ends). Otherwise, note that since we are looking at even moments, there are an even number of indices. Thus, to either side of the matching $\ga_1$ there can only be an even number of indices matched between themselves, since otherwise some matching would be crossing over $\ga_1$. Thus, to either side, we are reduced to the non-crossing configurations for a lesser moment. By induction, these two sub-configurations are tied, and then trivially tie with our initial matched pair. As at each step there were no obstructions on the indices, this matching contributes fully, completing the proof.
\end{proof}

Our claims about convergence to semi-circular behavior now follow immediately.

\begin{proof}[Proof of Theorem \ref{thm:mainexpformulasconvergence}(2)] It is trivial to show that the symmetric $m$-block circulant ensemble has its limiting spectral distribution converge to the semi-ellipse as $m\to\infty$ because we have an explicit formula for its moments. From Lemma \eqref{lem:formulaforevenmomentsepsilon}, we see that \be \lim_{m\to\infty} M_{2k;m}(N) \ = \ \lim_{m\to\infty} \sum_{g \le \nicefrac{k}{2}} \frac{\varepsilon_g(k)}{m^{2g}} \ = \ \varepsilon_0(k),\ee which in the proof of Lemma \ref{lem:limitsmomentssymmperiodmToep} we saw equals the Catalan number $C_k$.

We now turn to the symmetric $m$-block Toeplitz case. The proof proceeds similarly. From Lemma \ref{lem:limitsmomentssymmperiodmToep} we have \be \lim_{m\to\infty} M_{2k;m}\ =\ \lim_{m\to\infty} \left(C_k + \sum_{g \le \nicefrac{k}{2}} \frac{d(k,g)}{m^{2g}}\right) \ = \ C_k, \ee
completing the proof. \end{proof}


\section{Convergence of the Limiting Spectral Measures}\label{sec:convergence}

We investigate several types of convergence.

\ben

\item (Almost sure convergence) For each $k$, $X_{k;m,N} \to X_{k,m}$
almost surely if \be \pp_{m}\left(\{A \in \Omega_{m}: X_{k;m,N}(A) \to
X_{k,m}(A)\ {\rm as}\ N\to\infty\}\right) \ =  \ 1; \ee

\item (Convergence in probability) For each $k$, $X_{k;m,N} \to X_{k,m}$ in
probability if for all $\gep>0$, \be
\lim_{N\to\infty}\pp_{m}(|X_{k;m,N}(A) - X_{k,m}(A)|
> \gep) \ = \ 0;\ee

\item (Weak convergence) For each $k$, $X_{k;m,N} \to X_{k,m}$ weakly if
\be \pp_{m}(X_{k;m,N}(A) \le x) \ \to \ \pp(X_{k,m}(A) \le x) \ee as
$N\to\infty$ for all $x$ at which $F_{X_{k,m}}(x) := \pp(X_{k,m}(A) \le x)$ is
continuous.

\een

Alternate notations are to say either \emph{with probability 1} or \emph{strongly} for almost
sure convergence and \emph{in distribution} for weak convergence;
both almost sure convergence and convergence in probability imply
weak convergence. For our purposes we take $X_{k,m}$ as the random
variable which is identically $M_{k,m}$, the  limit of the average $m$\textsuperscript{{\rm th}} moment (i.e., $\lim_{N\to\infty} M_{k,m;N}$), which we show below exist and uniquely determine a probability distribution for our ensembles.


We have proved the first two parts of Theorem \ref{thm:mainexpformulasconvergence}, which tells us that the limiting spectral measures exist and giving us, for the symmetric $m$-block circulant ensemble, a closed form expression for the density. We now prove the rest of the theorem, and determine the various types of convergence we have. We first prove the claimed uniform convergence of part (3), and then discuss the weak, in probability, and almost sure convergence of part (4).

We use characteristic functions and Fourier analysis to show uniform (and thus pointwise) convergence of the limiting spectral distribution of the symmetric $m$-block circulant  ensemble to the semi-ellipse distribution (remember it is an semi-ellipse and not a semi-circle due to our normalization). We note that this implies $L^p$ convergence for every $p$. The proof follows by showing the characteristic functions are close, and then the Fourier transform gives the densities are close.

\begin{proof}[Proof of Theorem \ref{thm:mainexpformulasconvergence}(3)] The density $f_m$ is the Fourier transform of $\phi_m$ (equivalently, $\phi_m$ is the characteristic function associated to the density $f_m$, where we have to be slightly careful to keep track of the normalization of the Fourier transform; see \eqref{eq:fourdefphifm}); similarly the Wigner distribution $f_{\rm Wig}(x)$ is the Fourier transform of $\phi$, where the Wigner distribution (a semi-ellipse in our case due to our normalizations) is \be \twocase{f_{\rm Wig}(x) \ = \ }{\frac{1}{\pi} \sqrt{1 - \left(\frac{x}{2}\right)^2}}{if $|x| \le 2$}{0}{otherwise.}\ee

As our densities are nice, we may use the Fourier inversion formula to evaluate the difference. We find for any $\epsilon>0$ that
\bea
|\widehat{\phi}_m(x) - \widehat{\phi}(x)| & \ = \ & \left|\frac1{2\pi}\int_{-\infty}^\infty \left(\phi_m(t) - \phi(t)\right) e^{-itx}dt\right| \nonumber\\
& \leq & \int_{-\infty}^{\infty} |\phi_m(t) - \phi(t)|\,dt \nonumber\\ & \ll & m^{-\nicefrac{2}{9}+\epsilon}, \eea where the bound for this integral is proved in Lemma \ref{lem:l1boundsphimphi} and follows from standard properties of Laguerre polynomials and Bessel functions. Thus, as $m \to \infty$, $f_m(x) = \widehat{\phi}_m(x)$ converges to  $f_{\rm Wig}(x)=\widehat{\phi}(x)$ for all $x \in \R$.  As the bound on the difference depends only on $m$ and not on $x$, the convergence is uniform.

We now show $L^p$ convergence. We have $L^\infty$ convergence because it is equivalent to a.e. uniform convergence.  For $1 \leq p < \infty$, we automatically have $L^p$ convergence as we have both $L^1$ convergence and the $L^\infty$ norm is bounded.
\end{proof}


\begin{proof}[Proof of Theorem \ref{thm:mainexpformulasconvergence}(4)]
The proofs of these statements follow almost immediately from the arguments in \cite{HM,MMS,JMP}, as those proofs relied on degree of freedom arguments. The additional structure imposed by the $\Mod{m}$ relations does not substantially affect those proofs (as can seen in the generalizations of the arguments from \cite{HM} to \cite{MMS} to \cite{JMP}).
\end{proof}


\section{Future Research}\label{sec:futurework}

We discuss some natural, additional questions which we hope to study in future work.

\subsection{Representation Theory}\label{sec:repn}

The $N \times N$ $m$-block circulant matrices form a semisimple algebra over $\R$.  This algebra may be decomposed into $N$ simple subalgebras of dimension $m^2$, all but one or two of which are isomorphic to $\M_m(\C)$.  One can show that, up to first order, this decomposition sends our measure on symmetric $m$-block circulant matrices to the $m \times m$ Gaussian Unitary Ensemble.  One may then give a more algebraic proof of our results and circumvent the combinatorics of pairings; combining the two proofs gives a new proof of the results of \cite{HarZa}. This approach will appear in a more general setting in an upcoming paper of Kopp.  The general result may be regarded as a central limit theorem for Artin-Wedderburn decomposition of finite-dimensional semisimple algebras

\subsection{Spacings}

Another interesting topic to explore is the normalized spacings between adjacent eigenvalues. For many years, one of the biggest conjectures in random matrix theory was that if the entries of a full, $N\times N$ real symmetric matrix were chosen from a nice density $p$ (say mean 0, variance 1, and finite higher moments), then as $N\to\infty$ the spacing between normalized eigenvalues converges to the scaling limit of the GOE, the Gaussian Orthogonal Ensemble (these matrices have entries chosen from Gaussians, with different variances depending on whether or not the element is on the main diagonal or not). After resisting attacks for decades, this conjecture was finally proved; see the work of Erd\H{o}s, Ramirez, Schlein, and Yau \cite{ERSY,ESY} and Tao and Vu \cite{TV1,TV2}.

While this universality of behavior for differences seems to hold, not just for these full ensembles, but also for thin ensembles such as $d$-regular graphs (see the numerical observations of Jakobson, (S. D.) Miller, Rivin and Rudnick \cite{JMRR}), we clearly do not expect to see GOE behavior for all thin families. A simple counterexample are diagonal matrices; as $N\to\infty$ the density of normalized eigenvalues will be whatever density the entries are drawn from, and the spacings between normalized eigenvalues will converge to the exponential. We also see this exponential behavior in other ensembles. It has numerically been observed in various Toeplitz ensembles (see \cite{HM,MMS}).

For the ensemble of symmetric circulant matrices, we cannot have strictly exponential behavior because all but $1$ or $2$ (depending on the parity of $\nicefrac{N}{m}$) of the eigenvalues occur with multiplicity two.  This can be seen from the explicit formula for the eigenvalues of a circulant matrix.  Thus, the limiting spacing density has a point of mass $\frac{1}{2}$ at $0$.  Nonetheless, the \emph{nonzero} spacings appear to be distributed exponentially; see Figure \ref{fig:normspacingsymmcirc}.

\begin{figure}
\begin{center}
\includegraphics[scale=.7]{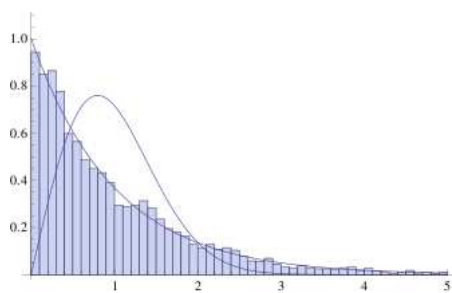}
\caption{\label{fig:normspacingsymmcirc} Density of nonzero spacings of the 10 central eigenvalues of 100 $1024 \times 1024$ symmetric circulant matrices, with independent entries picked i.i.d.r.v. from a Gaussian, normalized to have mean spacing 1.  Compared to exponential and GOE densities.
}
\end{center}
\end{figure}

Similarly, for a symmetric $m$-block circulant matrix, all but $N-m$ or $N-m-1$ of the eigenvalues occur with multiplicity two.  The nonzero spacings appear to have the same exponential distribution (see Figure \ref{fig:normevsymmpermcirc}). 
This is somewhat surprising, given that the eigenvalue density varies with $m$ and converges to the semi-circle as $m \to \infty$.  While we see new eigenvalue densities for $m$ constant, numerics suggest that we'll see new spacing densities for $\nicefrac{N}{m}$ constant. 

\begin{figure}
\begin{center}
\includegraphics[scale=.7]{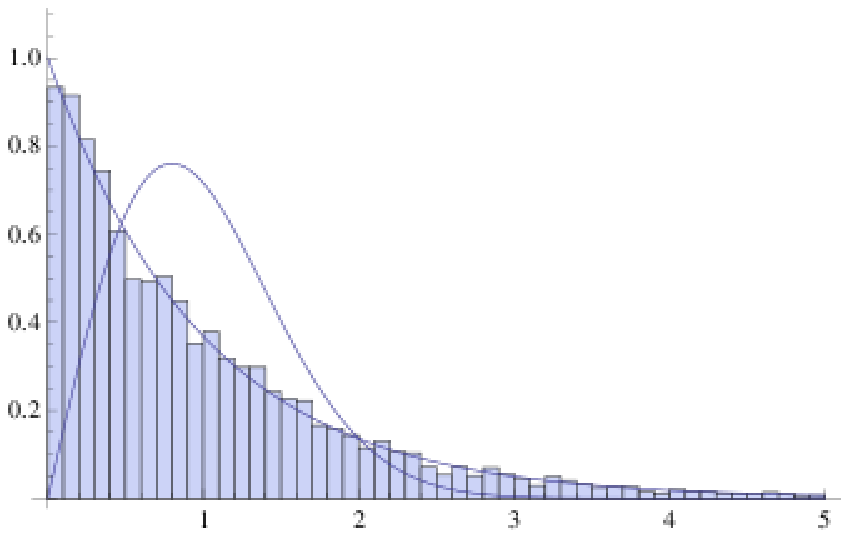}
\includegraphics[scale=.7]{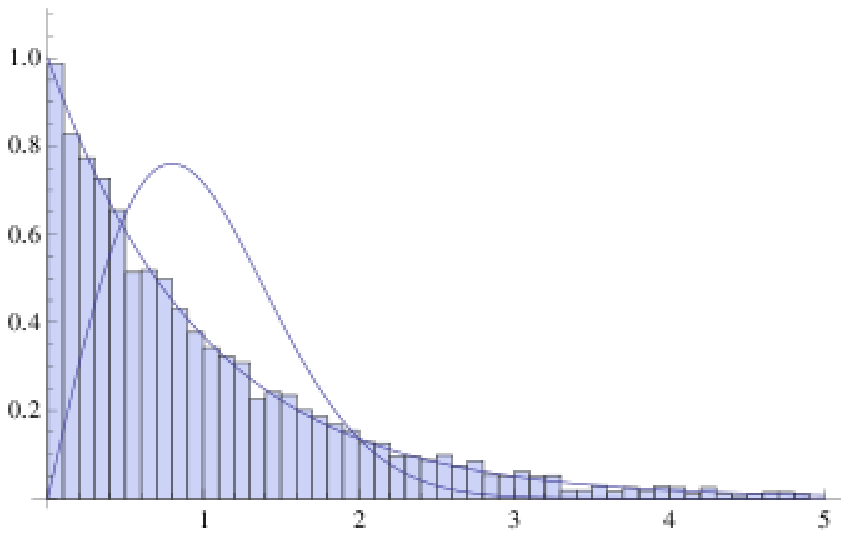}
\includegraphics[scale=.7]{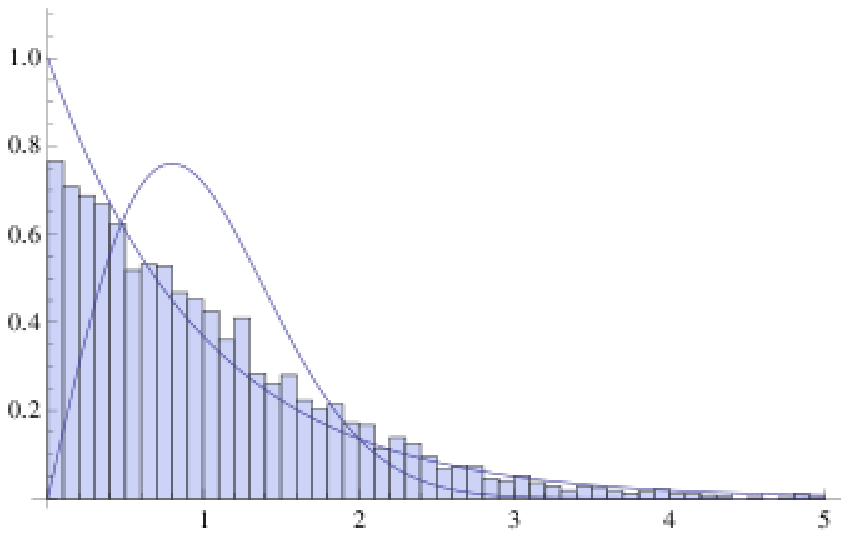}
\includegraphics[scale=.7]{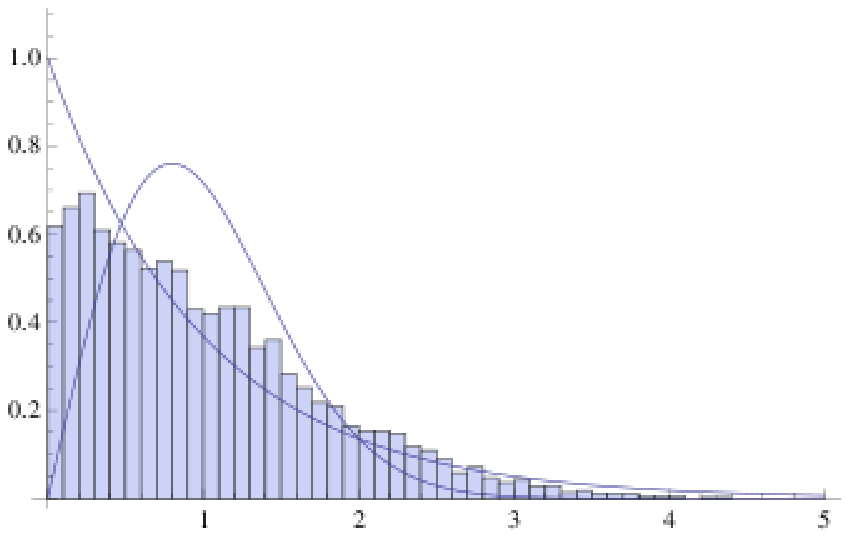}
\includegraphics[scale=.7]{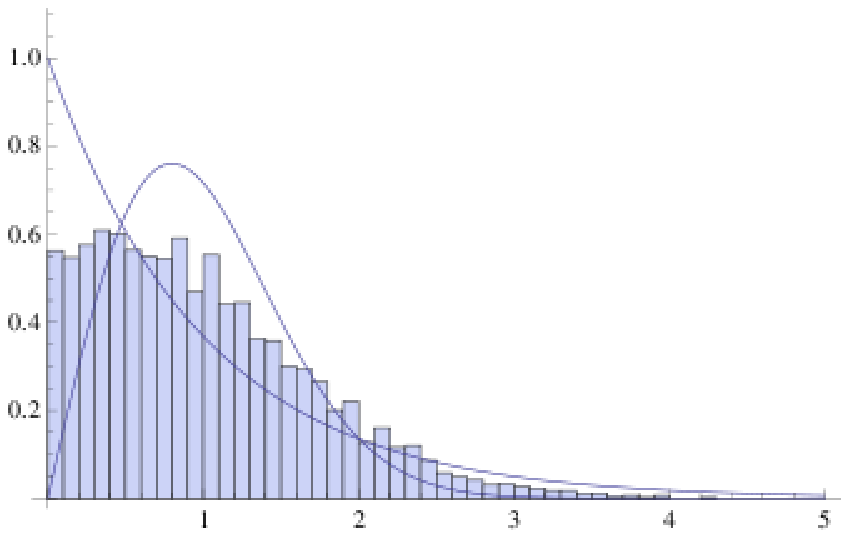}
\includegraphics[scale=.7]{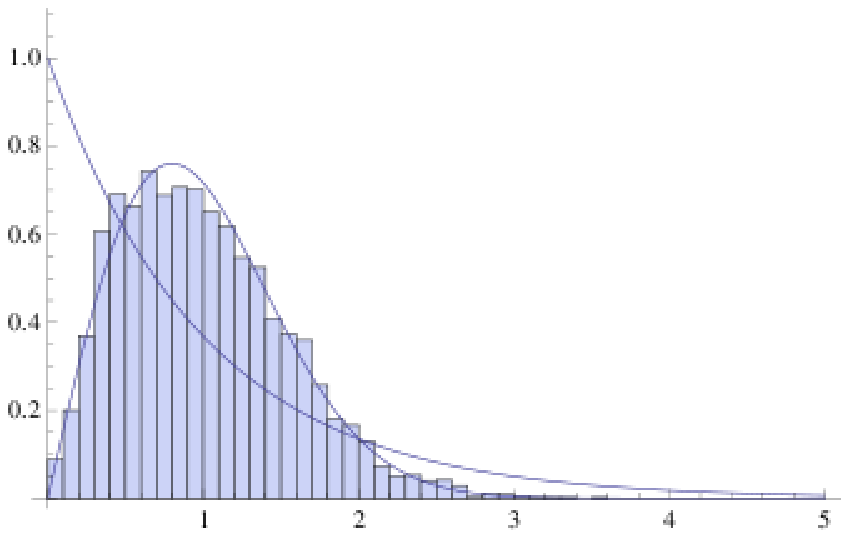}
\caption{\label{fig:normevsymmpermcirc} Density of nonzero spacings of the 10 central eigenvalues of 100 $1024 \times 1024$ symmetric $m$-block circulant matrices, with independent entries picked i.i.d.r.v. from a Gaussian, normalized to have mean spacing 1, with $m = 2,16,128,256,512,1024$, respectively. Compared to exponential and GOE densities.  
}
\end{center}
\end{figure}

However, for symmetric $m$-block Toeplitz matrices, we see different behavior (see Figure \ref{fig:normevsymmpermcirctoep}).  The spacings look exponentially distributed for $m=1$ and appear to converge to the GOE distribution as we increase $m$.  In the Toeplitz case, but not in the circulant, we see the spacings behaving as the spectral densities do.

\begin{figure}
\begin{center}
\includegraphics[scale=.7]{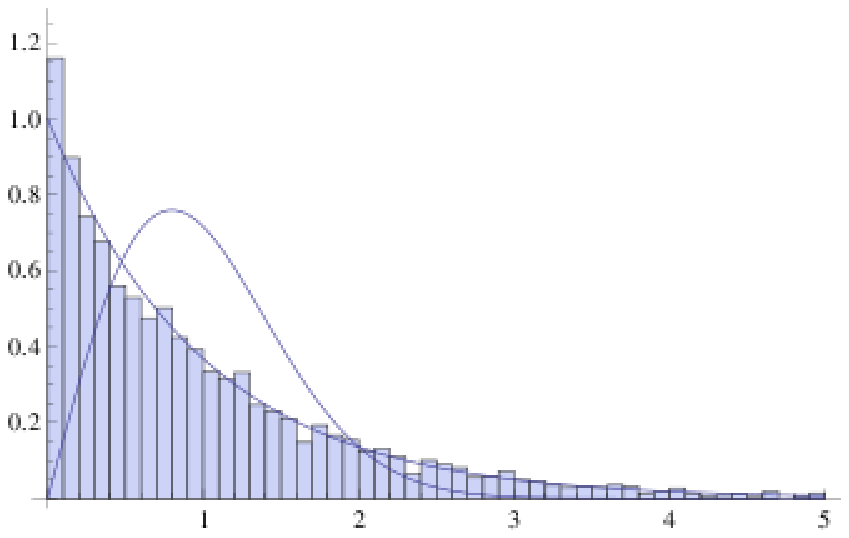}
\includegraphics[scale=.7]{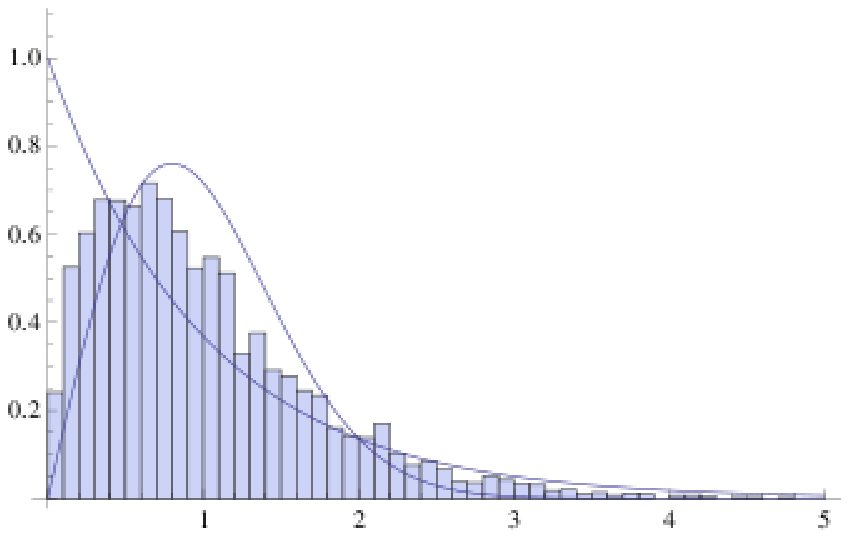}
\includegraphics[scale=.7]{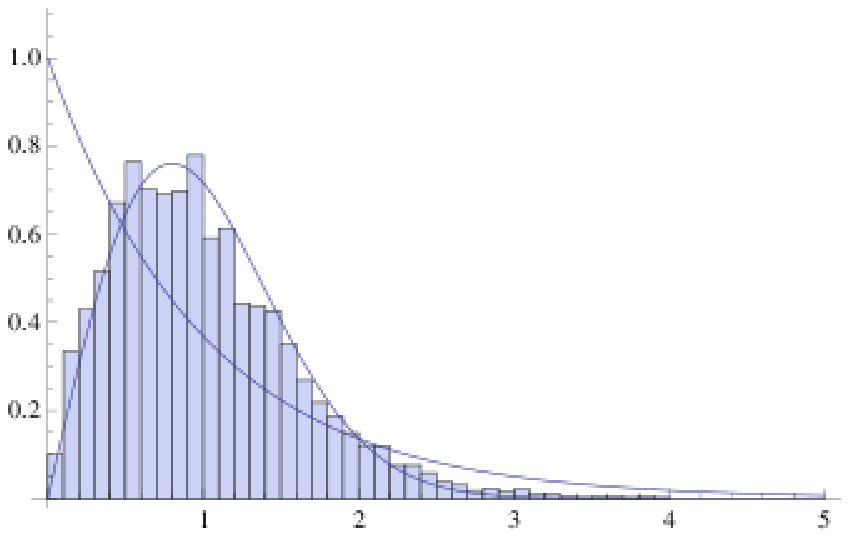}
\includegraphics[scale=.7]{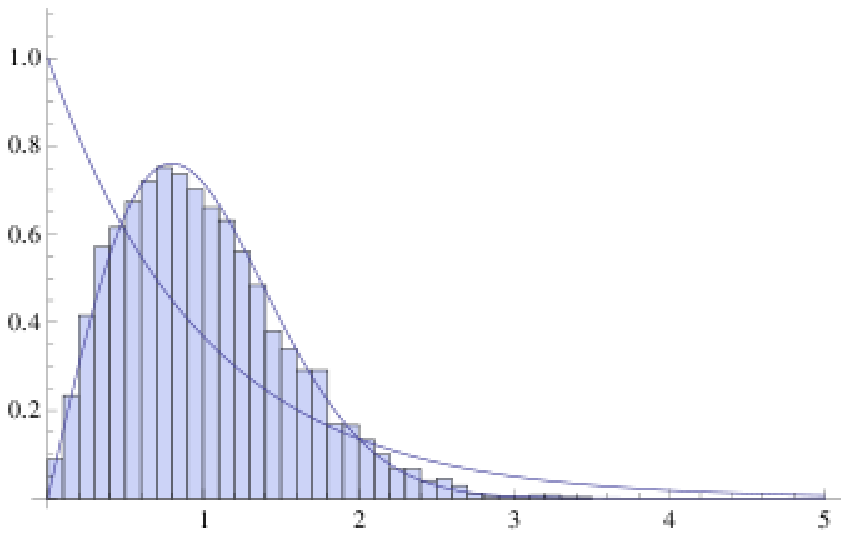}
\includegraphics[scale=.7]{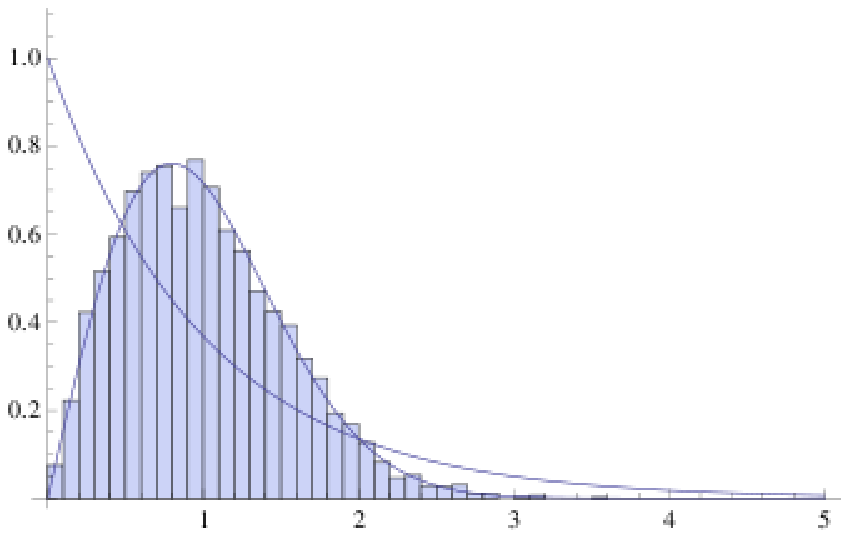}
\includegraphics[scale=.7]{diff1024c}
\caption{\label{fig:normevsymmpermcirctoep} Density of spacings of the 10 central eigenvalues of 100 $1024 \times 1024$ symmetric $m$-block Toeplitz matrices, with independent entries picked i.i.d.r.v. from a Gaussian, normalized to have mean spacing 1, with $m = 1,2,4,16,128,1024$, respectively. Compared to exponential and GOE densities.  
}
\end{center}
\end{figure}

The representation theoretic approach will be used to solve the spacings problem for symmetric $m$-block circulant matrices in an upcoming paper of Kopp.  The spacing problem for block Toeplitz matrices will require some new innovation.


\appendix

\section{Pointwise Convergence as $m\to\infty$}\label{sec:convrate}

\emph{This appendix by Gene Kopp, Steven J. Miller and Frederick Strauch\footnote{Department of Physics, Williams College, fws1@williams.edu}.}\\

The characteristic function for the spectral measures of the period $m$-block circulant matrices is
\begin{equation}
\phi_m(t)\ =\ \frac{1}{m}\ e^{-\nicefrac{t^2}{2m}} \sum_{\ell=1}^m {m \choose \ell} \frac{1}{(\ell -1)!} \left( \frac{-t^2}{m}\right)^{\ell - 1},
\label{phi}
\end{equation}
which solves the differential equation \be t\phi_m''(t) + 3\phi_m'(t) + t\left(4 - \left(\frac{t}{m}\right)^2\right)\phi_m(t)\ =\ 0 \ee
with initial condition $\phi_m(0) = 1$; letting $m \to \infty$ gives $t\phi''(t) + 3\phi'(t) + 4t\phi(t) = 0$,
with initial condition $\phi(0) = 1$. The solution to the finite $m$ equation is a Laguerre polynomial, and the $m=\infty$ limit is $\nicefrac{J_1(2t)}{t}$ with $J_1$ the Bessel function of order 1.

To see this, recall that the generalized Laguerre polynomial (see \cite{AS}) has the explicit representation
\begin{equation}
L_n^{(\alpha)}(x)\ =\ \sum_{i=0}^n {n+ \alpha \choose n-i} \frac{1}{i!} (-x)^i.
\label{laguerre}
\end{equation}
To compare (\ref{phi}) with (\ref{laguerre}), we first shift the summation index by one ($\ell \mapsto \ell + 1$) to find
\begin{equation}
\phi_m(t)\ =\ \frac{1}{m} \ e^{-\nicefrac{t^2}{2m}} \sum_{\ell=0}^{m-1} {m \choose \ell+1} \frac{1}{\ell!} \left( \frac{-t^2}{m}\right)^{\ell}.
\end{equation}
Using the identity
\begin{equation}
{m \choose \ell + 1} = {m \choose m - 1 - \ell}
\end{equation}
we see that $n = m - 1$, $\alpha = 1$, and thus the characteristic function can be written in terms of the Laguerre polynomial:
\begin{equation}
\phi_{m}(t)\ =\ \frac{1}{m} e^{-\nicefrac{t^2}{2m}} L_{m-1}^{(1)} (\nicefrac{t^2}{m}),
\end{equation}
or equivalently in terms of the confluent hypergeometric function
\begin{equation}
\phi_{m}(t)\ =\ e^{-\nicefrac{t^2}{2m}} M(m+1,2,-\nicefrac{t^2}{m}).
\end{equation}

From 13.2.2 of \cite{AS} we have
$\lim_{m \to \infty} \phi_{m}(t) = \phi(t)$; however, we need some control on the rate of convergence.


\begin{lem}\label{lem:l1boundsphimphi} Let $r > \nicefrac{1}{3}$ and $\beta = \frac23(1-r)$. For all $m$ and all $t$ we have \be \twocase{\left|\phi_m(t) - \phi(t)\right| \ \ll_r \ }{m^{-(1-r)}}{if $|t| \le m^\beta$}{t^{-\nicefrac{3}{2}} + m^{-\nicefrac{5}{4}} \exp(-\nicefrac{t^2}{2m})}{otherwise,} \ee where the implied constant is independent of $m$ but may depend on $r$. This implies \be \int_{-\infty}^\infty \left|\phi_m(t) - \phi(t)\right|dt \ \ll \   m^{-\frac{1-r}{3}}. \ee Letting $\epsilon > 0$ and taking $r = \frac13 + 3\epsilon$ implies the integral is $O(m^{-\nicefrac{2}{9}+\epsilon})$.
\end{lem}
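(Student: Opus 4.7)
The plan is to leverage the explicit special-function representations derived earlier in the appendix---$\phi_m(t) = \frac{1}{m} e^{-t^2/(2m)} L^{(1)}_{m-1}(t^2/m)$ and $\phi(t) = J_1(2t)/t$---together with the Mehler--Heine type limit $\tfrac{1}{m} L^{(1)}_{m-1}(t^2/m) \to J_1(2t)/t$, and to make that limit quantitative, uniformly in $t$, in the two announced ranges.

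For the central range $|t|\le m^\beta$, I would split the difference as
\[ \phi_m(t) - \phi(t) \ =\ e^{-t^2/(2m)}\!\left(\tfrac{1}{m}L^{(1)}_{m-1}(t^2/m) - \tfrac{J_1(2t)}{t}\right) + \left(e^{-t^2/(2m)}-1\right)\!\tfrac{J_1(2t)}{t}. \]
The second term is $O(t^2/m)=O(m^{2\beta-1})$ on this range. For the first, I would compare the convergent power series $\tfrac{1}{m}L^{(1)}_{m-1}(t^2/m)=\sum_{\ell=0}^{m-1}\binom{m}{\ell+1}m^{-\ell-1}\tfrac{(-t^2)^{\ell}}{\ell!}$ and $J_1(2t)/t=\sum_{\ell\ge 0}\tfrac{(-t^2)^\ell}{\ell!(\ell+1)!}$ coefficient by coefficient; a routine Pochhammer manipulation shows the $\ell$th coefficients differ by $O(\ell^2/m)/((\ell+1)!\,\ell!)$ uniformly in $\ell$, and Stirling handles the tail of the series for $\ell$ past $\sim t$. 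The choice $\beta=\tfrac{2}{3}(1-r)$ is exactly what balances the two sources of error in order to yield the advertised $O(m^{-(1-r)})$.

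For the tail range $|t|>m^\beta$ I would bound each of $\phi_m$ and $\phi$ separately. The standard Hankel asymptotic of $J_1$ immediately gives $|\phi(t)|\ll t^{-3/2}$. For $\phi_m$ I would invoke the Plancherel--Rotach/Szeg\H{o} asymptotic for Laguerre polynomials, which has two regimes according to whether $y:=t^2/m$ lies in the oscillatory range $y<4m$ or the exponential-tail range $y\ge 4m$. In the oscillatory range $|L^{(1)}_{m-1}(y)|\ll m^{1/4}y^{-3/4}e^{y/2}$, so the $e^{y/2}$ cancels against the Gaussian prefactor and what remains is $O(m^{-1/2}t^{-3/2})$, absorbed into the $t^{-3/2}$ piece; in the exponential-tail range the polynomial is controlled by its leading coefficient, and Stirling on $y^{m-1}/(m-1)!$ combined with the Gaussian produces the $m^{-5/4}e^{-t^2/(2m)}$ piece.

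Finally, converting the two pointwise bounds into the $L^1$ estimate is a routine integration. The central region contributes $\int_{|t|\le m^\beta}m^{-(1-r)}\,dt\ll m^{\beta-(1-r)}=m^{-(1-r)/3}$; the $t^{-3/2}$ tail contributes $\int_{|t|>m^\beta}t^{-3/2}\,dt\ll m^{-\beta/2}=m^{-(1-r)/3}$; and the Gaussian tail contributes $m^{-5/4}\int e^{-t^2/(2m)}\,dt\ll m^{-3/4}$, which is strictly smaller once $r>\tfrac{1}{3}$. Taking $r=\tfrac{1}{3}+3\epsilon$ yields the $O(m^{-2/9+\epsilon})$ conclusion. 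The main obstacle I anticipate is the quantitative Mehler--Heine bound in the central regime (getting the error explicit and uniform in $t$), and the slightly delicate bookkeeping required to match the two branches of the Laguerre asymptotic to the two summands in the stated tail bound; once those ingredients are in place the rest of the argument is routine.
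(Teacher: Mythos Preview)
Your treatment of the tail $|t|>m^\beta$ and the final $L^1$ integration is essentially the paper's argument: bound $|\phi_m|+|\phi|$ separately, use $J_1(x)\ll x^{-1/2}$ for $\phi$, and use the Szeg\H{o}/Plancherel--Rotach asymptotic for $L^{(1)}_{m-1}$ for $\phi_m$. The paper packages the Laguerre estimate as a single formula (8.978(3) in Gradshteyn--Ryzhik) whose main term produces the $t^{-3/2}$ piece and whose error term produces the $m^{-5/4}e^{-t^2/(2m)}$ piece, but your two-regime description leads to the same conclusion.

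The gap is in the central range $|t|\le m^\beta$. Your ``coefficient-by-coefficient'' comparison bounds the difference by
\[
\sum_{\ell\ge 0}\frac{1}{\ell!(\ell+1)!}\left|1-\prod_{j=0}^{\ell}(1-j/m)\right|\, t^{2\ell}\ \ll\ \frac{1}{m}\sum_{\ell\ge 0}\frac{\ell^2\,t^{2\ell}}{\ell!(\ell+1)!}.
\]
This last sum is essentially a modified Bessel expression and behaves like $t^{1/2}e^{2t}$ for large $t$; for $t\sim m^\beta$ it is super-polynomial in $m$ and does not give $O(m^{-(1-r)})$. The alternating signs in the original series are exactly what prevents this blow-up, but the triangle inequality throws them away. (This is not hypothetical: a version of your argument appears as the deleted attempt at the end of the appendix, where it only yields useful information for $|t|\ll \sqrt{\log m}$.)

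The paper's proof avoids this by using the expansion 13.3.7 of Abramowitz--Stegun for the confluent hypergeometric function, which writes
\[
\phi_m(t)\ =\ \frac{J_1(2t)}{t}\ +\ \sum_{n\ge 1}A_n\,(2m)^{-n}(-1)^n t^{\,n-1}J_{n+1}(2t),
\]
with $A_0=1$, $A_1=0$, $A_2=1$ and $A_{n+1}=A_{n-1}+\tfrac{2m}{n+1}A_{n-2}$. The point is that the oscillation is now packaged inside $|J_{n+1}(2t)|\le 1$, so taking absolute values costs nothing. A short induction gives $A_n\le c_r m^{rn}$ for any $r>\tfrac13$, and the series becomes geometric with ratio $t/(2m^{1-r})$; this yields $|\phi_m(t)-\phi(t)|\ll_r m^{-(1-r)}$ uniformly for $|t|\le m^{1-r}$, hence a fortiori for $|t|\le m^\beta$. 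That Bessel-series representation is the missing ingredient in your proposal.
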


\begin{proof} We first consider small $t$: $|t| \le m^\beta$ with $\beta = \frac23(1-r)$. Using 13.3.7 of \cite{AS} with $a=m+1$, $b=2$ and $z=-\nicefrac{t^2}{m}$ to bound the confluent hypergeometric function $M$, we find \be\label{eq:ASlaguerretobessel} \phi_m(t) \ = \ e^{-\nicefrac{t^2}{2m}} M(m+1,2,-\nicefrac{t^2}{m}) \ = \ \frac{J_1(2t)}{t} + \sum_{n=1}^\infty A_n (2m)^{-n} (-1)^n t^{n-1} J_{n+1}(2t),\ee where $A_0 = 1$, $A_1 = 0$, $A_2 = 1$ and $A_{n+1} = A_{n-1}+ \frac{2m}{n+1} A_{n-2}$ for $n \ge 2$.

For any $r>\nicefrac{1}{3}$ and $m$ sufficiently large we have $A_n \le m^{rn}$ (we can't do better than $r>\nicefrac{1}{3}$ as $A_3=\frac23 m$). This follows by induction. It is clear for $n\le 2$, and for larger $n$ we have by the inductive assumption that \be A_{n+1} \ = \ A_{n-1} +  \frac{2m}{n+1} A_{n-2} \ \le \ m^{r(n-1)} + m \cdot m^{r(n-2)} \ = \ m^{r(n+1)} \cdot (m^{-2r} + m^{1-3r}); \ee as $r > \nicefrac{1}{3}$ the above is less than $m^{r(n+1)}$ for $m$ large. If we desire a bound to hold for all $m$, we instead use $A_n \le c_r m^{rn}$ for $c_r$ sufficiently large. Substituting this bound for $A_n$ into \eqref{eq:ASlaguerretobessel}, noting $\nicefrac{J_1(2t)}{t} = \phi(t)$ and using $|J_n(x)| \le 1$ (see 9.1.60 of \cite{AS}) yields, for $|t| \le m^{1-r}$, \bea \left|\phi_m(t) - \phi(t)\right| & \ \le \ & \frac{c_r}{2m^{1-r}} \sum_{n=1}^\infty \left(\frac{t}{2m^{1-r}}\right)^{n-1} \ \ll_r \ m^{-(1-r)}.\eea

We now turn to $t$ large: $|t| \ge  m^\beta$. Using \be \left|\phi_m(t) - \phi(t)\right| \ \le \ \left|\phi_m(t)\right| + \left|\phi(t)\right| \ee to trivially bound the difference, the claim follows the decay of the Bessel and Laguerre functions. Specifically, (see 8.451(1) of \cite{GR}) we have $J_1(x) \ll x^{-\nicefrac{1}{2}}$ and thus \be \phi(t) \ = \ \frac{J_1(2t)}{t} \ \ll \ t^{-\nicefrac{3}{2}}.\ee For $\phi_m(t)$, we use 8.978(3) of \cite{GR}, which states \be L_n^{(\alpha)}(x) \ = \ \pi^{-\nicefrac{1}{2}} e^{\nicefrac{x}{2}} x^{-\nicefrac{\alpha}{2} - \nicefrac{1}{4}} n^{\nicefrac{\alpha}{2}-\nicefrac{1}{4}} \cos\left(2\sqrt{nx}-\frac{\alpha \pi}{2}-\frac{\pi}{4}\right) + O\left(n^{\nicefrac{\alpha}{2}-\nicefrac{3}{4}}\right), \ee so long as ${\rm Im}(\alpha) = 0$ and $x>0$. Letting $x = \nicefrac{t^2}{m}$ with $|t| \ge \frac13\log^{\nicefrac{1}{2}} m$, $\alpha = 1$ and $n=m-1$ we find \bea \phi_m(t) & \ = \ & m^{-1} e^{-\nicefrac{t^2}{2m}} L_{m-1}^{(1)}(\nicefrac{t^2}{m})\nonumber\\ & \ \ll \ & m^{-1} e^{-\nicefrac{t^2}{2m}} \left[e^{\nicefrac{t^2}{2m}} (\nicefrac{t^2}{m})^{-\nicefrac{3}{4}} m^{\nicefrac{1}{4}} + m^{-\nicefrac{1}{4}}\right] \nonumber\\ & \ll & t^{-\nicefrac{3}{2}} + m^{-\nicefrac{5}{4}} e^{-\nicefrac{t^2}{2m}}. \eea

All that remains is to prove the claimed bound for $\int_{-\infty}^\infty \left|\phi_m(t) - \phi(t)\right|dt$. The contribution from $|t| \le m^\beta$ is easily seen to be $O_r(\nicefrac{m^\beta}{m^{1-r}}) =O_r(m^{-\nicefrac{(1-r)}{3}})$ with our choice of $\beta$. For $|t| \ge m^\beta$, we have a contribution bounded by \bea 2\int_{m^\beta}^\infty \left(t^{-\nicefrac{3}{2}} + m^{-\nicefrac{5}{4}} e^{-\nicefrac{t^2}{2m}}\right)dt & \ \ll  \ & m^{-\nicefrac{\beta}{2}} + m^{-\nicefrac{3}{4}} \int_{-\infty}^\infty \frac1{\sqrt{2\pi m}}\ \exp(-\nicefrac{t^2}{2m})dt \nonumber\\ & \ll & m^{-\nicefrac{(1-r)}{3}} + m^{-\nicefrac{3}{4}}, \eea as the last integral is that of a Gaussian with mean zero and variance $m$ and hence is 1. (We chose $\beta = \frac23(1-r)$ to equalize the bounds for the two integrals.)  \end{proof}



\section{Generalized $m$-Block Circulant Matrices}\label{sec:appwentao}

\emph{This appendix by Steven J. Miller and Wentao Xiong\footnote{Department of Mathematics and Statistics, Williams College, xx1@williams.edu}.}\\

As the proofs are similar to the proof for $m$-block circulant matrices, we just highlight the differences. The trace expansion from before holds, as do the arguments that the odd moments vanish.

We first explore the modulo condition to compute some low moments, and show that the difference in the modulo condition between the $m$-block circulant matrices and the generalized $m$-block circulant matrices leads to different values for moments, and hence limiting spectral distributions. Thus the limiting spectral distribution depends on the frequency of each element, as well as the way the elements are arranged, in an $m$-pattern.

\subsection{Zone-wise Locations and Pairing Conditions}

Since we have restricted the computation of moments to even moments, and have shown that the only configurations that contribute to the $2k$\textsuperscript{th} moment are those in which the $2k$ matrix entries are matched in $k$ pairs in opposite orientation, we are ready to compute the moments explicitly. We start by calculating the $2$\textsuperscript{nd} moment, which by \eqref{moment} is $\frac{1}{N^2}\sum_{1\le i,j\le N} a_{ij}a_{ji}$. As long as the matrix is symmetric, $a_{ij}=a_{ji}$ and the $2$\textsuperscript{nd} moment is $1$. We now describe the conditions for two entries $a_{i_{s}i_{s+1}}, a_{i_{t}i_{t+1}}$ to be paired, denoted as  $a_{i_{s}i_{s+1}}=a_{i_{t}i_{t+1}} \equi (s,s+1)\sim (t,t+1)$, which we need to consider in detail for the computation of higher moments. To facilitate the practice of checking pairing conditions, we divide an $N\times N$ symmetric $m$-block circulant matrix into $4$ zones (see Figure \ref{fig:zones}), and then reduce an entry $a_{i_{s}i_{s+1}}$ in the matrix to its ``basic form''. Write $i_\ell =m\eta_\ell +\epsilon_\ell$, where $\eta_\ell \in\{1,2,\dots ,\frac{N}{m}\}$ and $\epsilon_\ell \in\{0,1,\dots ,m-1\}$, we have
\ben\label{zones}
\item $0\leq i_{s+1}-i_{s}\leq\frac{N}{2}-1 \Rightarrow a_{i_{s}i_{s+1}}\in$ zone 1 and $a_{i_{s}i_{s+1}} =a_{\epsilon_s ,m(\eta_{s+1}-\eta_s)+\epsilon_{s+1}}$;

\item $\frac{N}{2}\leq i_{s+1}-i_{s}\leq N-1 \Rightarrow a_{i_{s}i_{s+1}}\in$ zone 2 and $a_{i_{s}i_{s+1}} =a_{\epsilon_{s+1} ,m(\eta_s+\frac{N}{m}-\eta_{s+1})+\epsilon_s}$;

\item $\frac{N}{2}\leq i_{s}-i_{s+1}\leq N-1 \Rightarrow a_{i_{s}i_{s+1}}\in$ zone 3 and $a_{i_{s}i_{s+1}} =a_{\epsilon_s ,m(\eta_{s+1}+\frac{N}{m}-\eta_s)+\epsilon_{s+1}}$;

\item $0\leq i_{s}-i_{s+1}\leq\frac{N}{2}-1 \Rightarrow a_{i_{s}i_{s+1}}\in$ zone 4 and $a_{i_{s}i_{s+1}} =a_{\epsilon_{s+1} ,m(\eta_s-\eta_{s+1})+\epsilon_s}$.
\een
In short, $(i_{s+1}-i_s)$ determines which diagonal $a_{i_{s}i_{s+1}}$ is on. If $a_{i_{s}i_{s+1}}$ is in zone 1 or 3 (Area I), $\epsilon_s$ determines the slot of $a_{i_{s}i_{s+1}}$ in an $m$-pattern; if $a_{i_{s}i_{s+1}}$ is in zone 2 or 4 (Area II), $\epsilon_{s+1}$ determines the slot of $a_{i_{s}i_{s+1}}$ in an $m$-pattern.

\begin{center}
\begin{figure}
\centering\includegraphics[scale=.42]{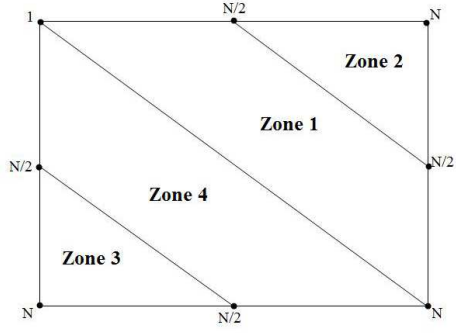}
\caption{The four zones for $m$-block circulant matrices.}\label{fig:zones}
\end{figure}
\end{center}

Recall the two basic pairing conditions, the diagonal condition that we have explored before, and the modulo condition, for which we will define an equivalence relation $\mathcal{R}$. For a real symmetric $m$-block circulant matrix following a generalized $m$-pattern and any two entries $a_{i_{s}i_{s+1}},a_{i_{t}i_{t+1}}$ in the matrix, suppose that $i_{s}$ and $i_{t+1}$ are the indices that determine the slot of the respective entries, then $i_{s}\mathcal{R}i_{t+1}$ if and only if $a_{i_{s}i_{s+1}},a_{i_{t}i_{t+1}}$ are in certain slots in an $m$-pattern such that these two entries can be equal. For example, for the $\{a,b\}$ pattern, $i_{s}\mathcal{R}i_{t+1}\equi i_{s}\con i_{t+1}\Mod{2}$; for the $\{a,a,b,b\}$ pattern, $i_{s}\mathcal{R}i_{t+1}\equi\mod{(i_{s},4)},\mod{(i_{t+1},4)}\in\{1,2\} \mbox{ or }\mod{(i_{s},4)},\mod{(i_{t+1},4)}\in\{3,0\}$.

We now formally define the two pairing conditions.
\ben\label{pairconds}
\item (diagonal condition) $i_{s} -i_{s+1} \equiv -(i_{t} -i_{t+1}) \pmod{N}$.

\item (modulo condition) $i_s\mathcal{R}i_{t+1}$ or $i_{s+1}\mathcal{R}i_t$, depending on which zone(s) $a_{i_{s}i_{s+1}}, a_{i_{t}i_{t+1}}$ are located in.
\een
Since the diagonal condition implies a Diophantine equation for each of the $k$ pairs of matrix entries, we only need to choose $k+1$ out of $2k$ $i_\ell$'s, and the remaining $i_\ell$'s are determined. This shows that, trivially, the number of non-trivial configurations is bounded above by $N^{k+1}$.
In addition, the diagonal condition always ensure that $a_{i_{s}i_{s+1}}$ and $a_{i_{t}i_{t+1}}$ are located in different areas. For instance, if $a_{i_{s}i_{s+1}}\in$ zone 1 and $i_{s}-i_{s+1}=-(i_{t}-i_{t+1})$, then $a_{i_{s}i_{s+1}}\in$ zone 4; if $a_{i_{s}i_{s+1}}\in$ zone 1 and $i_{s}-i_{s+1} =-(i_{t}-i_{t+1})-N$, then $a_{i_{s}i_{s+1}}\in$ zone 2, etc. Thus, if $i_s$ determines the slot for $a_{i_{s}i_{s+1}}$ in an $m$ pattern, then $i_{t+1}$ determines for $a_{i_{t}i_{t+1}}$; if $i_{s+1}$ determines the slot for $a_{i_{s}i_{s+1}}$, then $i_{t}$ determines for $a_{i_{t}i_{t+1}}$, and vice versa.

Considering the ``basic'' form of the entries, the two conditions above are equivalent to
\ben\label{circpairconds}
\item (diagonal condition) $(m\eta_{s}+\epsilon_{s})-(m\eta_{s+1}+\epsilon_{s+1}) \equiv -(m\eta_{t}+\epsilon_{t})+(m\eta_{t+1}+\epsilon_{t+1})\Mod{N}$
$\Rightarrow m(\eta_{s}-\eta_{s+1}+\eta_{t}-\eta_{t+1})+(\epsilon_{s}-\epsilon_{s+1}+\epsilon_{t}-\epsilon_{t+1}) =0\text{ or }\pm N$.

\item (modulo condition) $\epsilon_{s}\mathcal{R}\epsilon_{t+1}$ or $\epsilon_{s+1}\mathcal{R}\epsilon_{t}$.
\een
Since $m|N$, this requires $m|(\epsilon_{s}-\epsilon_{s+1}+\epsilon_{t}-\epsilon_{t+1})$. Given the range of the $\eta_\ell$'s and $\epsilon_\ell$'s, we have $\epsilon_{s}-\epsilon_{s+1}+\epsilon_{t}-\epsilon_{t+1} =0\text{ or }\pm m$, which indicates that
\be \eta_{s}-\eta_{s+1}+\eta_{t}-\eta_{t+1} =0, \pm 1, \frac{N}{m}, \frac{N}{m}\pm 1, -\frac{N}{m},\text{ or }-\frac{N}{m}\pm 1.\ee

As discussed before, if we allow repeated elements in an $m$-pattern, the equivalence relation $\mathcal{R}$ no longer necessitates a congruence relation as in pattern where each element is distinct. While the computation of high moments for general $m$-patterns appears intractable, fortunately we are able to illustrate how the difference in the modulo condition affects moment values by comparing the low moments for two simple patterns $\{a,b,a,b\}$ and $\{a,a,b,b\}$.


\subsection{The Fourth Moment}

Although we will show that the higher moments differ by the way the elements are arranged in an $m$-pattern, the $4$\textsuperscript{th} moment is in fact independent of the arrangement of elements. We first show that the $4$\textsuperscript{th} moment for any $m$-pattern is determined solely by the frequency at which each element appears, and then show that this lemma fails for the $6$\textsuperscript{th} moment and higher (see also \cite{Xi}). Briefly, for the higher moments for patterns with repeated elements, there exist ``obstructions to modulo equations'' that make trivial some non-trivial configurations for patterns without repeated elements. Due to the obstructions to modulo equations, some configurations that are non-trivial for all-distinct patterns become trivial for patterns with repeated elements, making the higher moments for repeated patterns smaller.

\begin{lem}\label{4thmom}
For an ensemble of real symmetric period $m$-block circulant matrices of size $N$, if within each $m$-pattern we have $n$ i.i.d.r.v. $\{\alpha_r\}_{r=1}^n$, each of which has a fixed number of occurrences $\nu_r$ such that $\sum_{r=1}^n \nu_r =m$, the $4$\textsuperscript{th} moment of the limiting spectral distribution is $2+\sum_{r=1}^n (\frac{\nu_r}{m})^3$.
\end{lem}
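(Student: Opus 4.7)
The plan is to adapt the trace-method framework of \S\ref{sec:trace} to the generalized pattern $\mathcal{P}$. The reductions that (i) kill the odd moments, (ii) restrict to opposite-orientation matchings, and (iii) discard identifications with $l<k$ all use only the mean-zero, unit-variance hypothesis on $p$, and so carry over verbatim. For the fourth moment ($k=2$), the three surviving matchings of the edges of the $4$-gon are the two non-crossing pairings (A) $\{e_1,e_2\},\{e_3,e_4\}$ and (C) $\{e_1,e_4\},\{e_2,e_3\}$, together with the single crossing pairing (B) $\{e_1,e_3\},\{e_2,e_4\}$, and $M_4$ is the sum of their contributions.

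First I would show each non-crossing pairing contributes $1$, just as in the all-distinct case. For (A), parametrize by $(i_1,d_1,d_3)\in(\Z/N\Z)^3$ with $d_2=-d_1$ and $d_4=-d_3$; writing $\delta_j=d_j\Mod m$, substitution gives $\epsilon_3=\epsilon_1+\delta_1+\delta_2=\epsilon_1$ and $\epsilon_5=\epsilon_1$. The mod-$m$ conditions of \eqref{circpairconds} at each paired edge then reduce, in every zone case, either to $\epsilon_i\,\mathcal{R}\,\epsilon_i$ (trivially true) or to $\pi(\epsilon_1)=\pi(\epsilon_3)$ (automatic from $\epsilon_3=\epsilon_1$). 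Hence every residue triple $(\epsilon_1,\delta_1,\delta_3)\in(\Z/m\Z)^3$ is admissible, giving $(N/m)^3\cdot m^3=N^3$ valid configurations and contribution $1$. Case (C) is identical after relabeling, so the total non-crossing contribution is $2$.

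Next I would handle the crossing pairing (B), where the generalized pattern produces a new contribution. Parametrize by $(i_1,d_1,d_2)\in(\Z/N\Z)^3$ with $d_3=-d_1$, $d_4=-d_2$, so that $\epsilon_2=\epsilon_1+\delta_1$, $\epsilon_3=\epsilon_1+\delta_1+\delta_2$, $\epsilon_4=\epsilon_1+\delta_2$ in $\Z/m\Z$. Splitting on whether each of $d_1,d_2$ lies in $(0,N/2)$ or $(N/2,N)$ (i.e., which Area each of $e_1,e_2$ is in), the conditions of \eqref{circpairconds} at the two paired edges become two relations of the form $\pi(x)=\pi(x+\delta_1)$ and $\pi(y)=\pi(y+\delta_2)$ for a pair of base points $x,y\in\{\epsilon_1,\epsilon_2,\epsilon_3,\epsilon_4\}$ depending on the case. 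A short check shows the count of admissible triples $(\epsilon_1,\delta_1,\delta_2)$ is the same in each of the four cases: for any base in letter class $r$, there are exactly $\nu_r$ admissible values of the corresponding $\delta$, so the count per case is $\sum_{r=1}^n \nu_r\cdot\nu_r^2=\sum_r \nu_r^3$. Each case contributes $N^3/(4m^3)$ configurations per admissible residue; summing over the four cases and dividing by $N^{k+1}=N^3$ yields contribution $\sum_r(\nu_r/m)^3$, and thus $M_4=2+\sum_r(\nu_r/m)^3$.

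The subtle point, and the main obstacle, is recognizing that each paired edge imposes only \emph{one} $\mathcal{R}$-constraint, not two. In the all-distinct ensemble the two mod-$m$ equations attached to a paired edge are equivalent modulo the diagonal relation, and the topological picture (``all vertices identified in the quotient share a common label'') gives the count cleanly. Once letters are allowed to repeat, $\mathcal{R}$ is no longer preserved under translation by $\delta_1,\delta_2$, and the redundancy of those two equations breaks down --- but the zone analysis encoded in \eqref{pairconds} shows that only one of them is actually forced on the matrix entries. Consequently only three of the four polygon-vertex labels are tied to a common letter class, leaving $\epsilon_3$ genuinely free modulo the relation $\epsilon_1+\epsilon_3=\epsilon_2+\epsilon_4$. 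This is precisely what produces the $\nu_r^3$ factor (rather than $\nu_r^4$ or a messier expression), and is also the phenomenon that will cause the sixth moment to discriminate between different arrangements of the same frequencies.
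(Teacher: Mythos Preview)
Your proof is correct and follows essentially the same route as the paper's: both split the fourth moment into the two adjacent (non-crossing) matchings, each contributing $1$ regardless of the pattern, and the single diagonal (crossing) matching, whose contribution is computed via a zone/Area case analysis showing that in every case exactly three residues are forced into a common letter class, yielding $\sum_r \nu_r^3$ admissible residue triples and hence contribution $\sum_r(\nu_r/m)^3$.

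The presentation differs in minor ways. The paper works with the three Diophantine cases $i+k=j+l$, $i+k=j+l\pm N$ separately and verifies that the circulant structure removes the Toeplitz-type obstructions, then phrases the modulo condition as four sets of equivalence relations among the indices (e.g., $i\mathcal{R}l\mathcal{R}j$) and argues each set arises with some zone-dependent probability but contributes the same amount. Your $(\Z/N\Z)^3$ parametrization by $(i_1,d_1,d_2)$ absorbs the $\pm N$ bookkeeping automatically, and your observation that $d_j\bmod N\in(0,N/2)$ versus $(N/2,N)$ determines the Area of $e_j$ (hence which single $\mathcal{R}$-constraint is active) is exactly what the paper's zone analysis establishes more discursively. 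Your explicit verification that all four cases give the same count $\sum_r\nu_r^3$ is a point the paper asserts but illustrates only by example. The closing remark on why only one constraint per paired edge survives, and why this foreshadows the sixth-moment dependence on arrangement, matches the paper's motivation for the subsequent section.
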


By \eqref{moment}, we calculate $\frac{1}{N^{\frac{4}{2}+1}}\sum_{1\leq i,j,k,l\leq N} a_{ij}a_{jk}a_{kl}a_{li}$ for the $4$\textsuperscript{th} moment. There are $2$ ways of matching the $4$ entries in $2$ pairs:

\ben\label{4thmommat}
\item (adjacent, 2 variations) $a_{ij}=a_{jk}$ and $a_{kl}=a_{li}$ (or equivalently $a_{ij} = a_{li}$ and $a_{jk} = a_{kl}$);

\item (diagonal, 1 variation) $a_{ij}=a_{kl}$ and $a_{jk}=a_{li}$.
\een

there are $3$ matchings, with the two adjacent matchings contributing the same to the $4$\textsuperscript{th} moment. We first consider one of the adjacent matchings, $a_{ij}=a_{jk}$ and $a_{kl}=a_{li}$. The pairing conditions \eqref{pairconds} in this case are:
\ben
\item (diagonal condition) $i-j\con k-j \Mod{N}$, $k-l\con i-l\Mod{N}$;

\item (modulo condition) $i\mathcal{R}k \mbox{ or }j\mathcal{R}j$, $k\mathcal{R}i \mbox{ or }l\mathcal{R}l$.
\een
Since $1\leq i,j,k,l\leq N$, the diagonal condition requires $i=k$, and then the modulo condition follows trivially, regardless of the $m$-pattern we study. Hence, we can choose $j$ and $l$ freely, each with $N$ choices, $i$ freely with $N$ choices, and then $k$ is fixed. This matching then contributes $\frac{N^3}{N^{\frac{4}{2}+1}}=1$ (fully) to the $4$\textsuperscript{th} moment, so does the other adjacent matching.

We proceed to the diagonal matching, $a_{ij}=a_{kl}$ and $a_{jk}=a_{li}$. The pairing conditions \eqref{pairconds} in this case are:
\ben
\item (diagonal condition) $i-j\con l-k \Mod{N}$, $j-k\con i-l \Mod{N}$;

\item (modulo condition) $i\mathcal{R}l \mbox{ or }j\mathcal{R}k$, $j\mathcal{R}i \mbox{ or }k\mathcal{R}l$.
\een
The diagonal condition $j-k\con i-l \Mod{N}$ is equivalent to $i-j\con l-k \Mod{N}$, which entails
\ben\label{4thmomdiag}
\item $i+k =j+l$, or

\item $i+k =j+l+N$, or

\item $i+k =j+l-N$.
\een
In any case, we only need to choose 3 indices out of $i,j,l,k$, and then the last one is fixed. In the following argument, without loss of generality, we choose $(i,j,l)$ and thus fix $k$.

For a general $m$-pattern, we write $i =4\eta_1 +\gep_1 ,j =4\eta_2 +\gep_2 ,k =4\eta_3 +\gep_3 ,l =4\eta_4 +\gep_4$, where $\eta_1 ,\eta_2 ,\eta_3 ,\eta_4 \in\{0,1,\dots,\frac{N}{m}\}$ and $\gep_1 ,\gep_2 ,\gep_3 ,\gep_4\in \{0,1,\dots, m-1\}$. Before we consider the $\epsilon_\ell$'s, we note that there exist Diophantine constraints. For example, if $i+k =j+l$, given that $1\leq i,j,l\leq N$, $k =j+l-i$ also needs to satisfy $1\leq k\leq N$. As a result, we need $0\leq \eta_2 +\eta_4 -\eta_1 \leq\frac{N}{4}$. Note that, due to the $\gep_\ell$'s, sometimes we may have $0\leq \eta_2 +\eta_4 -\eta_1 \leq\frac{N}{4}+\varepsilon$, where the error term $\varepsilon\in (-\frac{m}{2},\frac{m}{2})$ and only trivially affects the number of choices of $(\eta_2,\eta_4,\eta_1)$ for a fixed $m$ as $N\to\infty$.

We now explore the Diophantine constraints for each variation of the diagonal condition \eqref{4thmomdiag}. The $i+k=j+l$ case is similar to that in \cite{HM}, where, in a Toeplitz matrix, the diagonal condition only entails $i+k=j+l$, and there are obstructions to the system of Diophantine equations following the diagonal condition. However, the circulant structure that adds $i+k =j+l+N$ and $i+k =j+l-N$ to the diagonal condition fully makes up the Diophantine obstructions. This explains why the limiting spectral distribution for ensembles of circulant matrices has the moments of a Gaussian, while that for ensembles of Toeplitz matrices has smaller even moments. We now study the $3$ possibilities of the diagonal condition for the circulant structure.
\ben
\item Consider $i+k=j+l$.  We use Lemma 2.5 from \cite{HM} to handle the obstructions to Diophantine equations, which says:
\emph{Let $I_N = \{1,\dots, N\}$. Then $\#\{x,y,z \in I_N: 1\leq x+y-z \leq N\} =\frac{2}{3}N^3 + \frac{1}{3}N$.}

In our case, let $M=\frac{N}{m}$. The number of possible combinations of $(\eta_2,\eta_4,\eta_1)$ that allow $0\leq\eta_3\leq\frac{N}{4}$ is $\frac{2}{3}M^3 +\frac{1}{3}M$.\footnote{ In \cite{HM}, the related lemma is proven for $\eta_2,\eta_4,\eta_1 \in\N_+$, i.e., no cases where $\eta_2\eta_4\eta_1 =0$. Thus we are supposed to start from $S=0$; however, as $N\to\infty$, the error from this becomes negligible. }  For each of $\eta_2,\eta_4,\eta_1$, we have $m$ free choices of $\gep_\ell$, and thus the number of $(i,j,l)$ is $m^3 (\frac{2}{3}M^3 +\frac{1}{3}M) =\frac{2}{3}N^3 +O(N)$.

\item Consider $i+k =j+l+N$. Note $1\leq k\leq N$ requires $0\leq \eta_2 +\eta_4 -\eta_1 +\frac{N}{m}\leq\frac{N}{m} \Rightarrow -\frac{N}{m}\leq \eta_2 +\eta_4 -\eta_1\leq 0$. Similar to the $i+k=j+l$ case, we write $M =\frac{N}{m}$ and $S =\eta_2 +\eta_4$, and then $-\frac{N}{m}\leq S -\eta_1\leq 0 \Rightarrow S\leq\eta_1\leq M+S$ where obviously $S\leq M$. We have $S+1$ ways to choose $(\eta_2,\eta_4)$ s.t. $\eta_2+\eta_4 =S$, and $M-S+1$ choices of $\eta_1$. The number of $(i,j,l)$ is thus
\be m^3\sum_{S=0}^M (S+1)(M-S+1) =m^3 \left(\frac{M^3}{6} +M^2 +\frac{5}{6}M\right) =\frac{N^3}{6} +O(N^2).\ee

\item Consider $i+k =j+l-N$. Now $1\leq k\leq N$ requires $0\leq \eta_2 +\eta_4 -\eta_1 -\frac{N}{m}\leq\frac{N}{m} \Rightarrow \frac{N}{m}\leq \eta_2 +\eta_4 -\eta_1 \leq \frac{2N}{m}$. Again, we write $M =\frac{N}{m}$ and $S =\eta_1 +\eta_4$, and then $M\leq S -\eta_1\leq 2M \Rightarrow S-2M\leq\eta_1\leq S-M$ where obviously $S\geq M$. We have $2M-S+1$ ways to choose $(\eta_2,\eta_4)$ s.t. $\eta_2+\eta_4 =S$, and $S-M+1$ choices of $\eta_1$. The number of $(i,j,l)$ is thus
\be m^3 \sum _{S=M}^{2M} (2M-S+1)(S-M+1) =m^3\left(\frac{M^3}{6} +M^2 +\frac{5}{6}M\right) =\frac{N^3}{6} +O(N^2).\ee
\een
Therefore, with the additional diagonal conditions $i+k =j+l+N$ and $i+k =j+l-N$ induced by the circulant structure, the number of $(i,j,l)$ is of the order $(\frac{2}{3}+\frac{1}{6}+\frac{1}{6})N^3=N^3$, i.e. the circulant structure makes up the obstructions to Diophantine equations in the Toeplitz case. Since the $\eta_\ell$'s do not matter for the modulo condition, to make a non-trivial configuration, we may choose three $\eta_\ell$'s freely, each with $\frac{N}{m}$ choices, and then choose some $\gep_\ell$'s that satisfy the modulo condition, which we will study below.

For the modulo condition, it is necessary to figure out which zones the four entries are located in. Recall that the diagonal condition will always ensure that two paired entries are located in different areas. For the $4$\textsuperscript{th} moment, each of the $3$ variations of the diagonal condition is sufficient to ensure that any pair of entries involved are located in the right zones. We may check this rigorously by enumerating all possibilities of the zone-wise locations of the $4$ entries, e.g. if $i+k =j+l+N$, if $a_{ij}\in$ zone 1, then $a_{kl}\in$ zone 2.\footnote{ This enumeration is complicated since the zone where an entry $a_{ij}$ is located imposes restrictions on the choice of $i,j$, e.g. when $a_{i,j}\in$ zone 2, we have $i\geq\frac{N}{2}$ and $j\leq\frac{N}{2}$. } As a result, for a pair of matrix elements in the diagonal matching, say $a_{ij}=a_{kl}$, if $i$ determines the slot in an $m$-pattern for $a_{ij}$ and thus matters for the modulo condition, then $l$ determines for $a_{kl}$; if $j$ determines for $a_{ij}$, then $k$ determines for $a_{kl}$, and vice versa.

With the zone-wise issues settled, we study how to obtain a non-trivial configuration for the $4$\textsuperscript{th} moment. Recall the modulo condition for the diagonal matching: $i\mathcal{R}l \mbox{ or }j\mathcal{R}k$, $j\mathcal{R}i \mbox{ or }k\mathcal{R}l$. This entails $2^2 =4$ sets of equivalence relations,
\be i\mathcal{R}l\mathcal{R}j; i\mathcal{R}l\mathcal{R}k, j\mathcal{R}k\mathcal{R}i, j\mathcal{R}k\mathcal{R}l \ee
Each set of equivalence relations appears with a certain probability, depending on the zone-wise locations of the $4$ entries. For example, $i\mathcal{R}l\mathcal{R}j$ follows from $i\mathcal{R}l$ and $j\mathcal{R}i$, which requires both $a_{ij}$ and $a_{jk} \in$ Area I. Regardless of the probability with which each set occurs, we choose one free index with $N$ choices, and then another two indices such that these $3$ indices are related to each other under $\mathcal{R}$. The number of choices of the two indices after the free one is determined solely by the number of occurrences of the elements in an $m$-pattern.

We give a specific example of making a non-trivial configuration for the $4$\textsuperscript{th} for two simple patterns $\{a,b,a,b\}$ and $\{a,a,b,b\}$. Under the condition $i+k =j+l$, if $a_{ij}\in$ zone 1 and $a_{jk}\in$ zone 3, then $a_{kl}\in$ zone 4 and $a_{li}\in$ zone 2. We first select $\eta_1 ,\eta_2, \eta_4$ such that $i,j,l$ and $k =j+l-i$ satisfy the zone-wise locations.\footnote{ It is noteworthy that the specific location of an element still depends on the $\gep_\ell$'s, but as $N\to\infty$, the probability that the $\eta_\ell$'s alone determine the zone-wise locations of elements approaches $1$, i.e. the probability that adding the $\gep_\ell$'s changes the zone-wise location of an element approaches $0$. } In this case, based on pairing conditions \eqref{pairconds}, pairing $a_{ij}=a_{kl}$ and $a_{jk}=a_{li}$ will require $\gep_1\mathcal{R}\gep_4$ and $\gep_2\mathcal{R}\gep_1$, or equivalently $\gep_1\mathcal{R}\gep_2\mathcal{R}\gep_4$. Without loss of generality, we can start with a free $\gep_1$ with $4$ choices, then there are $2$ free choices for each of $\gep_2$ and $\gep_4$, and then we have a non-trivial configuration. We have similar stories under the other two variations of the diagonal condition and with other zone-wise locations of $a_{ij}$ and $a_{kl}$. Therefore, we can choose three out of four $\eta_\ell$'s freely, each with $\frac{N}{4}$ choices, then one $\gep_\ell$ with $4$ choices, then another two $\gep_\ell$'s each with $2$ choices, and finally the last index is determined under the diagonal condition. As discussed before, such a choice of indices will always satisfy the zone-wise requirements and thus the $\gep$-based pairing conditions. Thus there are $(\frac{N}{4})^3 \cdot 4\cdot 2\cdot 2 =\frac{N^3}{4}$ choices of $(i,j,k,l)$ that will produce a non-trivial configuration. It follows that the contribution from the diagonal matching to the $4$\textsuperscript{th} moment is $\frac{1}{N^3}(\frac{2}{3} +\frac{1}{6}+\frac{1}{6})\frac{N^3}{4} =\frac{1}{4}$.

The computation of the $4$\textsuperscript{th} moment for the simple patterns $\{a,b,a,b\}$ and $\{a,a,b,b\}$ can be immediately generalized to the $4$\textsuperscript{th} moment for other patterns. As emphasized before, both adjacent matchings contribute fully to the $4$\textsuperscript{th} moment regardless of the $m$-pattern. For diagonal matching, the system of Diophantine equations induced by the diagonal condition are also independent of the $m$-pattern in question, and the way we count possible configurations can be easily generalized to an arbitary $m$-pattern. We have thus proved Lemma \ref{4thmom}.

Note that Lemma \ref{4thmom} implies that the $4$\textsuperscript{th} moment for any pattern depends solely on the frequency at which each element appears in an $m$-period. Besides the $\{a,a,b,b\}$ pattern that we have studied in depth, we may easily test two extreme cases. One case where $n=m$, i.e. each random variable appears only once, represents the $m$-block circulant matrices from Theorem \ref{thm:mainexpformulasconvergence} for which the $4$\textsuperscript{th} moment is $2+\frac{1}{m^2}$ (and $m=1$ represents the circulant matrices for which the $4$\textsuperscript{th} moment is $3$). Numerical simulations for numerous patterns including $\{a,a,b\}$, $\{a,b,b\}$, $\{a,b,b,a\}$, $\{a,b,c,a,b,c\}$, $\{a,b,c,d,e,e,d,c,b,a\}$ et cetera support Lemma \ref{4thmom} as well; we present results of some simulations in Tables \ref{table:abpattern1} to \ref{table:abpattern3}.

\begin{center}
\begin{table}
\begin{tabular}{|r||r||r|r|r||r|}
\hline
$k$	&	$abab$ (theory)	&	$abab$ (observed)	&	$aabb$ (observed)	&	$abba$ (observed)	&	$N(0,1)$	\\
\hline											
2	&	1.0000	&	1.0016	&	1.0014	&	0.9972	&	1	\\
4	&	2.2500	&	2.2583	&	2.2541	&	2.2405	&	3	\\
6	&	7.5000	&	7.5577	&	7.3212	&	7.2938	&	15	\\
8	&	32.8125	&	33.2506	&	30.4822	&	30.5631	&	105	\\
10	&	177.1880	&	180.8270	&	153.9530	&	155.6930	&	945	\\
\hline											
\end{tabular}
\caption{Comparison of moments for various patterns involving $a$ and $b$. The first column are the theoretical values for the moments of the pattern ${a,b}$, and the final are the moments of the standard normal. The middle three columns are 200 simulations of $4000 \times 4000$ matrices. }\label{table:abpattern1}
\end{table}
\end{center}

\begin{center}
\begin{table}
\begin{tabular}{|r||r||r|r|r|r|}
\hline
$k$	&	$ababab$	&	$aaabbb$	&	$aaaabbbb$	&	$aaaaabbbbb$	&	$aababb$	\\
\hline											
2	&	1.0000	&	1.0008	&	1.0001	&	0.9984	&	0.9996	\\
4	&	2.2500	&	2.2541	&	2.2441	&	2.2449	&	2.2502	\\
6	&	7.5000	&	7.3011	&	7.2098	&	7.2551	&	7.2319	\\
8	&	32.8125	&	30.3744	&	29.5004	&	30.0127	&	29.5378	\\
10	&	177.1880	&	155.0380	&	145.8240	&	150.7220	&	145.4910	\\
\hline											
\end{tabular}
\caption{Comparison of moments for various patterns involving $a$ and $b$. The first column are the theoretical values for the moments of the pattern ${a,b}$. The remaining columns are 200 simulations of $3600 \times 3600$ matrices. }\label{table:abpattern2}
\end{table}
\end{center}

\begin{center}
\begin{table}
\begin{tabular}{|r||r||r|r|r|r|}
\hline
$k$	&	$abcabc$	&	$abccba$	&	$aabbcc$	&	$abbcca$	&	$aabcbc$	\\
\hline											
2	&	1.0000	&	1.0005	&	1.0006	&	0.9983	&	1.0013	\\
4	&	2.1111	&	2.1122	&	2.1153	&	2.1047	&	2.1161	\\
6	&	6.1111	&	6.0248	&	6.0540	&	6.0083	&	6.0235	\\
8	&	22.0370	&	20.9398	&	21.2004	&	20.9908	&	20.8411	\\
10	&	94.6296	&	85.0241	&	87.0857	&	85.9902	&	84.2097	\\
\hline											
\end{tabular}
\caption{Comparison of moments for various patterns involving $a$ and $b$. The first column are the theoretical values for the moments of the pattern ${a,b,c}$. The remaining columns are 200 simulations of $3600 \times 3600$ matrices. }\label{table:abpattern3}
\end{table}
\end{center}


\subsection{The Sixth Moment}

Although for an $m$-pattern with each element appearing at a fixed frequency, the $4$\textsuperscript{th} moment is independent of how the elements are arranged within the pattern, the way the elements are arranged in an $m$-pattern does affect higher moments and thus the limiting spectral distribution. As we will show for the $6$\textsuperscript{th} moment, for patterns with repeated elements, there exist ``obstructions to modulo equations'' that make trivial some non-trivial configurations for patterns without repeated elements. We illustrate this by explicitly computing the $6$\textsuperscript{th} moment for the pattern $\{a,b,a,b\}$, and then showing why the $6$\textsuperscript{th} moment for $\{a,a,b,b\}$ differs. It will then be clear that the modulo obstructions persist for more complicated patterns and higher moments.

For the $6$\textsuperscript{th} moment, we calculate $\frac{1}{N^{\frac{6}{2}+1}}\sum_{1\leq i,j,k,l,m,n\leq N} a_{ij}a_{jk}a_{kl}a_{lm}a_{mn}a_{ni}$ by \eqref{moment}. There are $(6-1)!!=15$ matchings, which can be classified into $5$ types, so that the $6$ entries are matched in $3$ pairs:
\ben\label{6thmommat}
\item $a_{ij}=a_{jk}$, $a_{kl}=a_{lm}$, $a_{mn}=a_{ni}$ (adjacent, 2 variations).

\item $a_{ij}=a_{jk}$, $a_{kl}=a_{ni}$, $a_{lm}=a_{mn}$ (semi-adjacent-1, 3 variations).

\item $a_{ij}=a_{jk}$, $a_{kl}=a_{mn}$, $a_{lm}=a_{ni}$ (semi-adjaent-2, 6 variations).

\item $a_{ij}=a_{lm}$, $a_{jk}=a_{ni}$, $a_{kl}=a_{mn}$ (diagonal-1, 3 variations).

\item $a_{ij}=a_{lm}$, $a_{jk}=a_{mn}$, $a_{kl}=a_{ni}$ (diagonal-2, 1 variation).
\een

Similar to the $4$\textsuperscript{th} moment computation, we first take advantage of adjacent cases. For example, if $a_{ij}=a_{jk}$, then the two pairing conditions \eqref{pairconds} require
\ben
\item $i-j = k-j \Rightarrow i=k$. Given that $i,j,k,l\in\{1,2,\dots,N\}$, neither $i-j =k-j+N$ nor $i-j =k-j-N$ is possible.

\item $i\equiv k\pmod{2}$ or $j\equiv j\pmod{2}$, depending on the zone-wise location of $a_{ij}$ and $a_{jk}$. Either follows trivially from the previous condition.
\een

For Type 1 (adjacent), take $a_{ij}=a_{jk}$, $a_{kl}=a_{lm}$, $a_{mn}=a_{ni}$, the diagonal condition requires
\begin{equation} i-j = k-j, k-l =m-l, m-n =i-n \Rightarrow i=m=k. \end{equation}
By the discussion on the adjacent case, the modulo condition is satisfied trivially. We can then freely choose $i,j,l,n$, each with $N$ choices, and make a non-trivial configuration that contributes $\frac{N^4}{N^{\frac{6}{2}+1}} =1$ (fully). Type 1 matchings thus contribute $2\times 1 =2$ ($2$ variations of Type 1) to the $6$\textsuperscript{th} moment.

For Type 2 (semi-adjacent-1), take $a_{ij}=a_{jk}$, $a_{kl}=a_{ni}$, $a_{lm}=a_{mn}$, the adjacent case $a_{ij}=a_{jk}$ requires $i=k$ as discussed before. Thus the second pair $a_{kl}=a_{ni}$ is equivalent to $a_{kl}=a_{nk}$, which is again an adjacent case. The third pair $a_{lm}=a_{mn}$ is an adjacent case itself. Thus Type 2 is in fact equivalent to Type 1, and contributes $3\times 1 =3$ to the $6$\textsuperscript{th} moment.

For Type 3 (semi-adjacent-2), the adjacent case $a_{ij}=a_{jk}$ requires $i=k$ as discussed before. Thus the third pair $a_{lm}=a_{ni}$ is equivalent to $a_{lm}=a_{nk}$, and the second and the third pair combined make the diagonal matching as in the $4$\textsuperscript{th} moment computation. We have shown that this diagonal matching contributes $\frac{1}{4}$ to the $4$\textsuperscript{th} moment (see Lemma \ref{4thmom}). Note that $j$ is free with $N$ choices despite the restriction $i=k$. Thus this matching contributes $\frac{1}{4}$, and this type $6\times\frac{1}{4} =\frac{3}{2}$, to the $6$\textsuperscript{th} moment.

Note that Type 1 and 2 are independent of the $m$-block circulant pattern along the diagonals in an $m$-block circulant matrix, and Type 3 also applies to other variations of $\{a,b,a,b\}$ such as $\{a,a,b,b\}$ and $\{a,b,b,a\}$. Type 1 through 3 combined, we have $2+3+\frac{3}{2} =6.5$ in the $6$\textsuperscript{th} moment.

We proceed the diagonal matchings, for which we will discuss the modulo obstructions, and start with a simple case for Type 4. Take the matching $a_{ij}=a_{lm}$, $a_{jk}=a_{ni}$, $a_{kl}=a_{mn}$ as an example, the two pairing conditions \eqref{pairconds} require:
\ben
\item $i-j =m-l$, $j-k =i-n$, $k-l =n-m$ $\Rightarrow i-j =m-l =n-k$,\footnote{ We temporarily ignore $i-j =m-l+N$ and $i-j =m-l-N$ for simplicity. In fact, as we show in the $4$\textsuperscript{th} moment computation, the $i-j =m-l+N$ case and the $i-j =m-l+N$ case, each of which has $\frac{N^3}{6} +O(N^2)$ solutions, together make up the obstructions to a Diophantine equation like $i-j =m-l$ that has $\frac{2N^3}{3}+O(N^2)$ solutions. } which shows that we need to choose only $4$ of the $6$ indices, and the other $2$ are determined.

\item $i\mathcal{R}m$ or $j\mathcal{R}l$, $j\mathcal{R}i$ or $k\mathcal{R}n$, $k\mathcal{R}n$ or $l\mathcal{R}m$, depending on the zone-wise locations of the entries. For example, if $a_{ij}\in$ zone 1, then $i-j= m-l \Rightarrow a_{lm}\in$ zone 4. We have $2^3 =8$ sets of equivalance relations, categorized as follows.
Category (1) (4 sets): $i\mathcal{R}m\mathcal{R}j, k\mathcal{R}n$; $j\mathcal{R}l\mathcal{R}m, k\mathcal{R}n$; $i\mathcal{R}m\mathcal{R}l, k\mathcal{R}n$; $j\mathcal{R}l\mathcal{R}i, k\mathcal{R}n$.

Category (2) (2 sets): $i\mathcal{R}m\mathcal{R}j\mathcal{R}l$; $j\mathcal{R}l\mathcal{R}i\mathcal{R}m$.

Category (3) (2 sets): $i\mathcal{R}m$, $k\mathcal{R}n$; $j\mathcal{R}l$, $k\mathcal{R}n$.
\een

Each set of equivalance relations appears with a certain probability, and the probabilities of observing each $\mathcal{R}$ set sum up to $1$. We show below that, regardless of the probability of observing each set, each set contributes $\frac{1}{4}$ to the $6$\textsuperscript{th} moment, and thus the probability-weighted contribution is simply $\frac{1}{4}$.

For Cat.(1), the set of equivalence relations $i\mathcal{R}m\mathcal{R}j, k\mathcal{R}n$ requires $a_{ij}, a_{jk}, a_{kl}\in$ zone 1 or 3. Thus we can start with a free $i$ with $N$ choices, then select $m,j$, each with $\frac{N}{2}$ choices, such that $i\mathcal{R}j\mathcal{R}m$. Then we pick a $k$, and note that $i-j= n-k, i\mathcal{R}j\Rightarrow k\mathcal{R}n$. Recall that, for the pattern $\{a,b,a,b\}$, $i\mathcal{R}j$ indicates $2|(i-j)$, and it follows that $2|(n-k)$. In other words, we can freely choose a $k$ with $N$ choices, and the diagonal condition $i-j =n-k$ ensures that we have a good $n$. This set thus contributes $\frac{1}{N^4}\cdot(N\cdot\frac{N}{2}\cdot\frac{N}{2}\cdot N) =\frac{1}{4}$. The same analysis applies to the other $3$ sets in Cat.(1).

For Cat.(2), take the set $i\mathcal{R}m\mathcal{R}j\mathcal{R}l$. We start with a free $i$, and then select $m, j$, each with $\frac{N}{2}$ choices, such that $i\mathcal{R}m\mathcal{R}j$. Note that, again, $i-j=m-l, i\mathcal{R}j \Rightarrow m\mathcal{R}l \Rightarrow i\mathcal{R}m\mathcal{R}j\mathcal{R}l$. This set thus contributes $\frac{1}{N^4}\cdot(N\cdot\frac{N}{2}\cdot\frac{N}{2}\cdot N) =\frac{1}{4}$. The same analysis applies to the other set in Cat.(2).

For Cat.(3), take the set $i\mathcal{R}m, k\mathcal{R}n$. We start with a free $i$, and then select $m$ with $\frac{N}{2}$ free choices such that $i\mathcal{R}m$. Then we choose a free $k$ with $N$ choices and $n$ with $\frac{N}{2}$ choices such that $k\mathcal{R}n$. This set thus contributes $\frac{1}{N^4}\cdot(N\cdot\frac{N}{2}\cdot N\cdot\frac{N}{2}) =\frac{1}{4}$. The same analysis applies to the other set of Cat.(3).

Since each individual set of equivalence relations in Cat.(1)-(3) contributes equally, the probability-weighted contribution to the $6$\textsuperscript{th} moment is $\frac{1}{4}$. Therefore, Type 4, with $3$ variations, contributes $\frac{3}{4}$.

Similarly, the pairing conditions \eqref{pairconds} entail the following for Type 5 (diagoal 2).
\ben
\item $i-j =m-l =k-n$.

\item 2 categories of equivalence relation set.

Cat.(1)(6 sets): $i\mathcal{R}m\mathcal{R}k, j\mathcal{R}n$; $j\mathcal{R}l\mathcal{R}n, k\mathcal{R}m$; $i\mathcal{R}m, j\mathcal{R}n\mathcal{R}l$; $j\mathcal{R}l, k\mathcal{R}m\mathcal{R}i$; $i\mathcal{R}m\mathcal{R}k, l\mathcal{R}n$; $j\mathcal{R}l\mathcal{R}n, k\mathcal{R}i$.

Cat.(2)(2 sets): $i\mathcal{R}m\mathcal{R}k$; $j\mathcal{R}l\mathcal{R}n$.
\een
Replicating the analysis of Type 4, we find that, since each set of equivalence relations in Cat.(1) and Cat.(2) contributes $\frac{1}{4}$, the probability-weighted contribution must be $\frac{1}{4}$ as well. Since Type 5 has only $1$ variation, Type 5 contributes $\frac{1}{4}$ to the $6$\textsuperscript{th} moment.

Therefore, the combined contribution from Type 4 and Type 5 is $\frac{3}{4}+\frac{1}{4}=1$. The $6$\textsuperscript{th} moment for the pattern $\{a,b,a,b\}$ is then $6.5+1 =7.5$.

Now we examine why the contribution from diagonal matchings for the pattern $\{a,a,b,b\}$ differs from that for $\{a,b,a,b\}$. As discussed before, Type 1 through 3 matchings, with a total contribution of $6.5$, also apply to $\{a,a,b,b\}$. For Type 4 and 5, however, the combined contribution is less than $1$. Recall a key argument in the analysis of Type 4 matching before: for the $2$-block circulant $\{a,b,a,b\}$ pattern, under $i-j=m-l=n-k$, if we choose $i\mathcal{R}j\mathcal{R} m$, i.e. $i\con j\con m\Mod{2}$, then $l=j+m-i$ will satisfy $l\con m\Mod{2}$ as well. Namely, $i-j=m-l, i\mathcal{R}j \Rightarrow m\mathcal{R}l$. However, for $\{a,a,b,b\}$, if we specify $\mathcal{R}$ as $s\mathcal{R}t\equi\mod{(s,4)},\mod{(t,4)}\in\{1,2\}$ or $\mod{(s,4)},\mod{(t,4)}\in\{0,3\}$, and choose $i\mathcal{R}j\mathcal{R}m$, it is possible that $l=j+m-i$ is not related to $m$ under $\mathcal{R}$. For instance, when $\mod{(i,4)}=2,\mod{(j,4)}=1,\mod{(m,4)}=3$, we have $i\mathcal{R}j$, but $\mod{(l,4)}=2$. Some configurations that are non-trivial for $\{a,b,a,b\}$ then become trivial for $\{a,a,b,b\}$, while all the non-trivial configurations for $\{a,a,b,b\}$ are still non-trivial for $\{a,b,a,b\}$. Thus, we expect the $6$\textsuperscript{th} moment for $\{a,a,b,b\}$ to be smaller than that for $\{a,b,a,b\}$, which is also evidenced by numerics. We phrase such a loss of non-trivial configurations as due to ``obstructions to modulo equations'', or ``modulo obstructions'' for short, which will clearly persist in higher moments for general $m$-block circulant patterns with repeated elements.

Based on the brute-force computation above, we may also bound the even moments for generalized $m$-block circulant patterns. It is clear that a lower bound is the moment for the $m$-block circulant pattern of the same period length and in which each element is distinct. For example, in terms of high ($2k$\textsuperscript{th}, $k\ge 2$) moments, $\{a,a,b,b\}>\{a,b,c,d\}$ (both of length $4$). In the computation of high moments, a pattern with repeated elements has all the non-trivial configurations that an all-distinct pattern of the same length can have, and gains extra non-trivial configurations due to the repeated elements.

An easy upper bound is the moment of the standard Gaussian, which is the limiting spectral distribution for the ensemble of circulant matrices. We may also easily find a sharper upper bound for a family of simple $m$-block circulant patterns in which each element appears at the same frequency, e.g. $\{a,b,c,c,b,a\}$, $\{a,a,b,c,b,c\}$, etc. For this family, an upper bound will be associated with a pattern where each element only appears once. For example, in terms of high moments, $\{a,b,c\}>\{a,b,c,c,b,a\}$. We may take $\{a,b,c\}$ as $\{a,b,c,a,b,c\}$, and note that, although in $\{a,b,c,a,b,c\}$, the probability of choosing each letter is the same as in $\{a,b,c,c,b,a\}$, the former pattern is free of modulo obstructions that exist for the latter.

For a more general $m$-block circulant pattern, however, a sharper upper bound is not easily attainable. For instance, it is not clear whether $\{a,a,b,c\}>\{a,b,c\}$. Some numeric evidence suggests that a pattern in which $gcd(\nu_1,\nu_2\dots \nu_\ell)=1$, where $\nu_\ell$ is the number of occurrences of an element in an $m$-period, has larger high moments than those with the same frequency of each element but $gcd(\nu_1,\nu_2\dots \nu_\ell)\ge 2$. For example, $\{a,b,c,c\}>\{a,a,b,b,c,c,c,c\}$.

Obviously, the accounting above will become significantly more involved for more complicated patterns or higher moments, but the basic ideas remain the same. We also foresee that as the moments get higher, the number of configurations that contribute trivially will increase so quickly that the higher moments get increasingly farther below the standard Gaussian moments. This is also evidenced by simulations.


\subsection{Existence and Convergence of High Moments}

Although it is impractical to find every moment for a general $m$-block circulant pattern using brute-force computation, we are still able to prove that, for any $m$-block circulant pattern, every moment exists, is finite (and satisfies certain bounds), and that there exists a limiting spectral distribution. In addition, the empirical spectral measure of a typical real symmetric $m$-block circulant matrix converge to this limiting measure, and we have convergence in probability and almost sure convergence.

We have shown that all the odd moments vanish as $N\to\infty$, and thus we focus on the even moments. We need to prove the following theorem.

\begin{thm} For any patterned $m$-block circulant matrix ensemble, $\lim_{N\to\infty} M_{2k}(N)$ exists and is finite. \end{thm}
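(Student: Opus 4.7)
The plan is to adapt the moment expansion of Section~\ref{sec:trace} to a generalized pattern $\mathcal{P}$ and to show that each of the finitely many contributing pairings contributes a well-defined finite constant to $\lim_{N\to\infty} M_{2k}(N)$. I would proceed in three steps.

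First, the trace expansion and the odd-moment reduction of Lemma~\ref{lem:oddmomentsandevenformula} go through verbatim, since they use only the mod-$N$ degree-of-freedom bound on $\eta(\sim)$, which continues to hold: the weaker mod-$m$ pattern-compatibility constraints can only make the count larger, and the bound is obtained by ignoring those constraints entirely. The argument of Lemma~\ref{lem: matchings} that only pairings with all edges matched in opposite orientation contribute at leading order also survives, as its proof depends only on the mod-$N$ sign structure. One is thus reduced to
\[
M_{2k}(N) \;=\; \frac{1}{N^{k+1}} \sum_\sigma \eta_\mathcal{P}(\sigma) + O_k\!\left(\frac{1}{N}\right),
\]
where $\sigma$ ranges over the at most $(2k-1)!!$ opposite-orientation pairings of the edges of the $2k$-gon, and $\eta_\mathcal{P}(\sigma)$ counts the index tuples $(i_1,\ldots,i_{2k}) \in \{1,\ldots,N\}^{2k}$ satisfying both the diagonal equations mod $N$ and the zone-appropriate $\mathcal{R}$-relations mod $m$ imposed by $\mathcal{P}$.

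Second, for each such $\sigma$, I would decompose $i_\ell = m\eta_\ell + \epsilon_\ell$ as in the appendix, with $\eta_\ell \in \{0,\ldots,N/m-1\}$ and $\epsilon_\ell \in \{0,\ldots,m-1\}$. Summing over all (finitely many) consistent zone assignments of the $2k$ edges, the constraints decouple at leading order into an $\eta$-part and an $\epsilon$-part. The $\eta$-part contributes $(N/m)^{k+1}$ free choices, up to a zone-dependent boundary correction of size $O_k(N^k)$ arising from the finitely many $\eta$-tuples that would push some $i_\ell$ outside $\{1,\ldots,N\}$; this is exactly the computation of Lemma~\ref{lem:formulaforevenmomentsepsilon}, and it is independent of $\mathcal{P}$ because it concerns only the wrapped mod-$N$ diagonals. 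The $\epsilon$-part collects both the sum-to-zero conditions $\epsilon_s - \epsilon_{s+1} + \epsilon_t - \epsilon_{t+1} \equiv 0 \pmod m$ and the $\mathcal{R}$-equivalence conditions, and counts to a finite nonnegative integer $c(\sigma, \mathcal{P})$ depending only on $\sigma$ and $\mathcal{P}$ (and bounded trivially by $m^{2k}$).

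Third, combining the two factors, each pairing contributes $c(\sigma, \mathcal{P})/m^{k+1} + O_k(1/N)$ to $M_{2k}(N)$, so
\[
\lim_{N \to \infty} M_{2k}(N) \;=\; \frac{1}{m^{k+1}} \sum_\sigma c(\sigma, \mathcal{P})
\]
exists and is bounded by $(2k-1)!!\,m^{k-1}$. The main obstacle lies in Step~2: correctly tracking the zone-wise bookkeeping (namely which of $i_s$ or $i_{s+1}$ determines the slot of $a_{i_s i_{s+1}}$ in the pattern) so that the $\eta$/$\epsilon$ factorization genuinely holds at leading order. This requires a systematic case analysis over zone assignments, generalizing Xiong's fourth- and sixth-moment computations above, and verifying that the corrections from boundary zones contribute only to the $O_k(1/N)$ error. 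Once this factorization is established, both existence and finiteness of $\lim M_{2k}(N)$ follow immediately.
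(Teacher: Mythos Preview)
Your overall reduction in Step~1 is sound and matches the paper: the trace expansion, the odd-moment bound, and the restriction to opposite-orientation pairings all carry over unchanged because they depend only on the mod-$N$ diagonal structure.

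The gap is in Step~2. The clean $\eta$/$\epsilon$ factorization you hope for does \emph{not} hold at leading order for a general pattern $\mathcal{P}$, and your final formula $\lim_{N\to\infty} M_{2k}(N)=m^{-(k+1)}\sum_\sigma c(\sigma,\mathcal{P})$ is incorrect when $\mathcal{P}$ has repeated letters. The reason is precisely the zone bookkeeping you flag: for a fixed pair $a_{i_s i_{s+1}}=a_{i_t i_{t+1}}$, the modulo constraint is either $\epsilon_s\,\mathcal{R}\,\epsilon_{t+1}$ or $\epsilon_{s+1}\,\mathcal{R}\,\epsilon_t$ depending on the zone of $a_{i_s i_{s+1}}$, and for a non-congruence relation $\mathcal{R}$ these two constraints are genuinely different. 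Hence the $\epsilon$-count is a function $c(Z,\sigma,\mathcal{P})$ of the zone assignment $Z$, not a single $c(\sigma,\mathcal{P})$. At the same time, fixing $Z$ imposes linear inequalities on the $\eta_\ell$, so the $\eta$-count within $Z$ is $v(Z)(N/m)^{k+1}+O_k(N^k)$ for some volume $0\le v(Z)\le 1$, not the full $(N/m)^{k+1}$. The leading term is therefore
\[
\frac{1}{N^{k+1}}\,\eta_{\mathcal P}(\sigma)\;\longrightarrow\;\frac{1}{m^{k+1}}\sum_{Z} v(Z)\,c(Z,\sigma,\mathcal P),
\]
which does not collapse to a product unless $c(Z,\sigma,\mathcal{P})$ is independent of $Z$. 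It \emph{is} independent of $Z$ in the all-distinct case (there $\mathcal{R}$ is a congruence and both choices coincide, which is why Lemma~\ref{lem:formulaforevenmomentsepsilon} needs no zone analysis), but the sixth-moment computation for $\{a,a,b,b\}$ versus $\{a,b,a,b\}$ shows it is not in general---this zone-dependence is exactly the ``modulo obstruction'' phenomenon.

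The paper's proof does not attempt a global factorization. Instead it fixes the $\epsilon$-vector (finitely many choices, independent of $N$), reads off the resulting $\eta$-system $\eta_s-\eta_{s+1}+\eta_t-\eta_{t+1}=\gamma$ together with the zone inequalities needed to make that $\epsilon$-vector relevant, and then invokes the volume argument of \cite{HM} to show each such system contributes $M_{2k}(\mathcal S)(N/m)^{k+1}+O_k((N/m)^k)$ for a well-defined volume $M_{2k}(\mathcal S)$. Summing over the finitely many $\epsilon$-vectors, $\eta$-systems, and matchings gives existence of the limit. Your plan becomes correct if you replace the single $c(\sigma,\mathcal{P})$ by the zone-weighted sum above and justify the volumes $v(Z)$; that is essentially the paper's argument reorganized.
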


\begin{proof}
It is trivial that $M_{2k}(N)$ is finite. As discussed before, it is bounded below by the $2k$\textsuperscript{th} moment for the ensemble of $m$-block circulant matrices where, in the $m$-pattern, each element is distinct, and more importantly it is bounded above by the $2k$\textsuperscript{th} moment for the ensemble of circulant matrices, and we know that the limiting spectral distribution for this matrix ensemble is a Gaussian.

We now show that $\lim_{N\to\infty} M_{2k}(N)$ exists. To calculate $M_{2k}(N)$, we match $2k$ elements from the matrix, $\{a_{i_1i_2},a_{i_2i_3},\dots ,a_{i_{2k}i_1}\}$, in $k$ pairs, where $i_\ell\in\{1,2,\dots ,N\}$ and this will give $(2k-1)!!$ matchings. For each matching, there are a certain number of configurations, and most of such configurations do not contribute to the moments as $N\to\infty$.

For the $m$-block circulant pattern, the equivalence relation $\mathcal{R}$ implies that $\epsilon_{s}\mathcal{R}\epsilon_{t+1} \Leftrightarrow\epsilon_{s}=\epsilon_{t+1}$, and since $m|(\epsilon_{s}-\epsilon_{s+1}+\epsilon_{t}-\epsilon_{t+1})$, we have $\epsilon_{s+1}=\epsilon_{t}$ as well (see \eqref{circpairconds}).\footnote{ This explains why, for an $m$-pattern without repeated elements, the zone-wise locations of matrix entries do not matter in making a non-trivial configuration.} Thus $\eta_{s}-\eta_{s+1}+\eta_{t}-\eta_{t+1} =0\text{ or }\pm\frac{N}{m}$, three equations that have $(\frac{N}{m})^3 +O((\frac{N}{m})^2)$ solutions in total, as we have shown in the $4$\textsuperscript{th} moment computation.

However, if there are repeated elements in an $m$-period, then $\epsilon_{s}\mathcal{R}\epsilon_{t+1}$ no longer necessitates $\epsilon_{s}=\epsilon_{t+1}$, and it is possible that $(\epsilon_{s}-\epsilon_{s+1}+\epsilon_{t}-\epsilon_{t+1})=\pm m$. Thus, the zone-wise locations of elements matter in making non-trivial configurations. Recall that the zone-wise location (see \eqref{zones}) of an element $a_{i_{s}i_{s+1}}$ is determined by $(i_{s+1}-i_{s})$: if $a_{i_{s}i_{s+1}}$ is in zone 1 or 3 (Area I), $\epsilon_s$ determines the slot of $a_{i_{s}i_{s+1}}$ in an $m$-period; if $a_{i_{s}i_{s+1}}$ is in zone 2 or 4 (Area II), $\epsilon_{s+1}$ determines the slot of $a_{i_{s}i_{s+1}}$ in an $m$-period. In addition, the diagonal condition will always ensure that two paired entries $a_{i_{s}i_{s+1}}$ and $a_{i_{t}i_{t+1}}$ are located in different areas.

Recall that for any matching $\mathcal{M}$, the $k$ pairs of matrix elements, each pair in the form of $a_{i_{s}i_{s+1}}=a_{i_{t}i_{t+1}}$, are fixed. For any $\mathcal{M}$, to make a non-trivial configuration, we first choose an $\epsilon$ vector of length $2k$. If we choose all the $\epsilon_\ell$'s freely, there are $m^{2k}$ possible choices for an $\epsilon$ vector, most of which do not meet the modulo condition, and trivially, $m^{2k}$ is an upper bound for the number of valid $\epsilon$ vectors. It is noteworthy that out of the $2k$ $\epsilon_\ell$'s of an $\epsilon$ vector, only some of the $\epsilon_\ell$'s will matter for the modulo condition. Which $\epsilon_\ell$'s in fact matter depends on how we pair the $2k$ matrix entries $a_{i_{s}i_{s+1}}$'s and the zone-wise locations of the paired $a_{i_{s}i_{s+1}}$'s, which we cannot determine without fixing the $\eta_\ell$'s (and thus the $i_\ell$'s).

However, for any matching, the way we pair the $2k$ matrix entries into $k$ pairs is fixed, and for each fixed pair $a_{i_{s}i_{s+1}}=a_{i_{t}i_{t+1}}$, two $\epsilon_\ell$'s will matter for the modulo condition: either $\epsilon_{s}\mathcal{R}\epsilon_{t+1}$ or $\epsilon_{s+1}\mathcal{R}\epsilon_{t}$. Thus there are $2^k$ ways to choose $k$ pairs of $\epsilon_\ell$'s for each matching. For each way of fixing the $k$ pairs of $\gep_\ell$'s, we examine each $\gep$ pair, say $(\gep_{\ell_1},\gep_{\ell_2})$, and there are a certain number of choices of $(\gep_{\ell_1},\gep_{\ell_2})$ such that $\gep_{\ell_1}\mathcal{R}\gep_{\ell_2}$. Continuing in this way, for each $\epsilon$ pair, we choose two $\epsilon_\ell$'s that satisfy the equivalence relation $\mathcal{R}$. Note that an $\gep_\ell$ may matter twice, once, or never for the modulo condition depending on the zone-wise locations of the $a_{i_{s}i_{s+1}}$'s. We then choose the other $\gep_\ell$'s that do not matter for the modulo condition such that for each pair of $a_{i_{s}i_{s+1}}=a_{i_{t}i_{t+1}}$, we have $\epsilon_{s}-\epsilon_{s+1}+\epsilon_{t}-\epsilon_{t+1} =0\text{ or }\pm m$, and finally we have a valid $\epsilon$ vector. The number of valid $\epsilon$ vectors will be determined by $m$, $k$, and the pattern of an $m$-period, but will be independent of $N$ since the system of $k$ equivalence relations for the modulo condition does not involve $N$.

With a valid $\epsilon$ vector, we have fixed the zone-wise locations of the $2k$ matrix elements by fixing the $\epsilon_\ell$'s that matter for the modulo condition. We now turn to the diagonal condition and study the $\eta_\ell$'s. With $k$ equations in the form of
\be m(\eta_{s}-\eta_{s+1}+\eta_{t}-\eta_{t+1})+(\epsilon_{s}-\epsilon_{s+1}+\epsilon_{t}-\epsilon_{t+1}) =0\text{ or }\pm N, \ee
and $(\epsilon_{s}-\epsilon_{s+1}+\epsilon_{t}-\epsilon_{t+1})$ known in each of the $k$ equations, we in fact have $k$ equations in the form of
\be \eta_{s}-\eta_{s+1}+\eta_{t}-\eta_{t+1}=\gamma,\ee
where $\gamma\in\{0,\pm 1,\frac{N}{m},\frac{N}{m}\pm 1,-\frac{N}{m},-\frac{N}{m}\pm 1\}$. This gives us $k+1$ degrees of freedom in choosing the $\eta_\ell$'s, and trivially, we can have at most $(\frac{N}{m})^{k+1}$ vectors of $\eta_\ell$'s. Since the $\epsilon$ vector is fixed, for one equation $\eta_{s}-\eta_{s+1}+\eta_{t}-\eta_{t+1}=\gamma$, there are only $3$ choices of $\gamma$. With $k$ equations in this form, we have at most $3^k$ systems of $\eta$ equations. Note that not all of the $\eta$ vectors satisfying an $\eta$ equation system derived from the diagonal condition will help make a non-trivial configuration, since the $\eta_\ell$'s need to be chosen such that the resulted $a_{i_{s}i_{s+1}}$'s will satisfy the zone-wise locations in order to be coherent with the pre-determined $\epsilon$ vector. For example, if in a pair of matrix entries $a_{i_{s}i_{s+1}}=a_{i_{t}i_{t+1}}$ where $\epsilon_{s}\mathcal{R}\epsilon_{t+1}$, even though the $\eta_\ell$'s are chosen such that $\eta_{s}-\eta_{s+1}+\eta_{t}-\eta_{t+1}=\gamma$, it is possible that $a_{i_{s}i_{s+1}},a_{i_{t}i_{t+1}}$ are located in certain zones such that we need $\epsilon_{s+1}\mathcal{R}\epsilon_{t}$ to ensure a non-trivial configuration.

The following steps mirror those in \cite{HM}. Denote an $\eta$ equation system by $\mathcal{S}$. For any $\mathcal{S}$ we have $k$ equations with $\eta_1, \eta_2,\dots,\eta_{2k}\in\{1,2,\dots,\frac{N}{m}\}$. Let $z_\ell=\frac{\eta_\ell}{N/m}\in\{\frac{m}{N},\frac{2m}{N},\dots,1\}$. Without the zone-wise concerns discussed before, the system of $k$ equations would have $k+1$ degrees of freedom and determine a nice region in the $(k+1)$-dimensional unit cube. Taking into account the zone-wise concerns, however, we will still have $k+1$ degrees of freedom. For example, for a pair of matrix elements $a_{i_{s}i_{s+1}}=a_{i_{t}i_{t+1}}$, the system $\mathcal{S}$ requires  $\eta_{s}-\eta_{s+1}+\eta_{t}-\eta_{t+1}=\gamma$. If we need $\epsilon_{s}\mathcal{R}\epsilon_{t+1}$ to make a non-trivial configuration, say $a_{i_{s}i_{s+1}}\in$ zone 1, then we will obtain an additional equation $0\leq i_{s+1}-i_{s}\leq\frac{N}{2}-1 \Rightarrow 0\leq (\eta_{s+1}-\eta_{s})+\epsilon_{s+1}-\epsilon_{s}\leq\frac{N}{2}-1$ with $(\epsilon_{s+1}-\epsilon_{s})\in\{-m+1,-m+2,\dots,0,1,\dots, m-2,m-1\}$. Based on the region determined by $\eta_{s}-\eta_{s+1}+\eta_{t}-\eta_{t+1}=\gamma$, this additional zone-related restriction will only allow a slice of the region for us to choose valid $\eta_\ell$'s. With $k$ zone-wise restrictions, only a proportion of the original region in the unit cube will be preserved for the choice of the $\eta$ vector. Nevertheless, the ``width'' of each slice is of order $\frac{N}{2}$, and we still have $k+1$ degrees of freedom.

Therefore, with $m$ fixed and as $N\to\infty$, we obtain to first order the volume of this region, which is finite. Unfolding back to the $\eta_\ell$'s, we obtain $M_{2k}(\mathcal{S})(\frac{N}{m})^{k+1}+O_k((\frac{N}{m})^k)$, where $M_{2k}(\mathcal{S})$ is the volume associated with this $\eta$ system. Summing over all $\eta$ systems, we obtain the number of non-trivial configurations for the $2k$\textsuperscript{th} moment from this particular $\epsilon$ vector. Next, within a given matching $\mathcal{M}$, we sum over all valid $\epsilon$ vectors, the number of which is independent of $N$ as we have shown before. In the end, we sum over the $(2k-1)!!$ matchings to obtain $M_{2k}N^{k+1}+O_k(N^k)$, and the $2k$\textsuperscript{th} moment is simply $\frac{M_{2k}N^{k+1}+O_k(N^k)}{N^{k+1}}=M_{2k}+O(\frac{1}{N})$.
\end{proof}

The above proves the existence of the moments. The convergence proof follows with only minor changes to the convergence proofs from \cite{HM,MMS}.


\ \\

\end{document}